\DeclareFontShape{OMX}{cmex}{m}{n}{%
	<-7.5> cmex7
	<7.5-8.5> cmex8
	<8.5-9.5> cmex9
	<9.5-> cmex10
}{}
\DeclareSymbolFont{largesymbols}{OMX}{cmex}{m}{n} 
\numberwithin{equation}{section}
\newtheorem{thm}    {Theorem}
\newtheorem{lem}      [thm] {Lemma}
\newtheorem{dfn} [thm] {Definition}
\newtheorem{pps}[thm] {Proposition}
\newtheorem{cor}  [thm] {Corollary}
\newtheorem{exm}    [thm] {Example}
\setlist[enumerate,1]{label=(\roman*)}
\newcommand{\ds}{\displaystyle}
\newtheorem{THEO}{Theorem}
\begin{document}
	
\title[A refined Gallai-Edmonds structure theorem]{A refined Gallai-Edmonds structure theorem for weighted matching polynomials}
	
\author[T.~J.~Spier]{Thomás Jung Spier \\ \today}
\address{IMPA, Rio de Janeiro, RJ, Brasil}
\email{thomasjs@impa.br}
	
	
\begin{abstract}
In this work, we prove a refinement of the Gallai-Edmonds structure theorem for weighted matching polynomials by Ku and Wong. Our proof uses a connection between matching polynomials and branched continued fractions. We also show how this is related to a modification by Sylvester of the classical Sturm's theorem on the number of zeros of a real polynomial in an interval. In addition, we obtain some other results about zeros of matching polynomials.
\end{abstract}


\keywords{Matching polynomial, Gallai–Edmonds Structure Theorem, Continued fraction, Sturm's Theorem}
\makeatletter
\@namedef{subjclassname@2020}{\textup{2020} Mathematics Subject Classification}
\makeatother
\subjclass[2020]{05C31, 05C70}
	
	
\clearpage\maketitle
\thispagestyle{empty}
	
	
\section{Introduction}\label{intro}

Let $G$ be a finite simple graph. A {\it matching} in $G$ is a set of pairwise non-adjacent edges. The celebrated Gallai-Edmonds theorem~\cite{gallai1963kritische,edmonds1965paths} gives the structure of maximum matchings in a graph.

A vertex $v$ is covered by the matching $M$ if there is an edge in $M$ which is incident to $v$. The vertex $v$ is {\it essential} if
there is a maximum size matching in $G$ which leaves $v$ uncovered. If all the vertices of a graph are essential then the graph is {\it factor-critical}.

\begin{THEO}(Gallai's lemma~\cite{gallai1963kritische})\label{gallai.old} If $G$ is connected and factor-critical then each maximum size matching leaves exactly one vertex uncovered.	
\end{THEO}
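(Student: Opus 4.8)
The plan is to reduce the statement to a fact about the \emph{deficiency} of $G$ and then run a shortest-path / augmenting-path argument.

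First I would record the elementary observation that every maximum matching of $G$ leaves the same number of vertices uncovered, namely $\operatorname{def}(G) := |V(G)| - 2\nu(G)$, where $\nu(G)$ is the size of a maximum matching. Thus the assertion ``each maximum matching leaves exactly one vertex uncovered'' is equivalent to $\operatorname{def}(G)=1$. The lower bound $\operatorname{def}(G)\ge 1$ is immediate from factor-criticality: if $G$ had a perfect matching then every maximum matching would be perfect and cover all vertices, so no vertex would be essential, contradicting the hypothesis that every vertex is essential. It therefore remains to prove $\operatorname{def}(G)\le 1$.

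Suppose toward a contradiction that $\operatorname{def}(G)\ge 2$. Among all maximum matchings $M$ and all pairs $\{u,v\}$ of distinct $M$-uncovered vertices, I would choose one minimizing the distance $d_G(u,v)$; such pairs exist because $\operatorname{def}(G)\ge 2$, and $d_G(u,v)<\infty$ since $G$ is connected. If $u$ and $v$ were adjacent then $M\cup\{uv\}$ would be a larger matching, so $d_G(u,v)\ge 2$. Let $z$ be the neighbour of $u$ on a shortest $u$-$v$ path, so that $d_G(u,z)=1$ and $d_G(z,v)=d_G(u,v)-1$. Minimality of the distance forces $z$ to be covered by $M$, for otherwise $\{u,z\}$ would be a closer uncovered pair. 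By factor-criticality there is a maximum matching $N$ leaving $z$ uncovered.

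Now I would examine the connected component $P$ of the symmetric difference $M\triangle N$ containing $z$. Since $M\triangle N$ is a disjoint union of alternating paths and even cycles, and $z$ is $M$-covered but $N$-uncovered, $z$ is an endpoint of an alternating path $P$ beginning with an $M$-edge. Because $N$ is maximum, $P$ cannot be an $N$-augmenting path, so its other endpoint $t$ is $N$-covered; this forces $P$ to contain equally many $M$- and $N$-edges and $t$ to be $M$-uncovered. Hence $M' := M\triangle P$ is again a maximum matching, and by construction it leaves $z$ uncovered, covers $t$, and leaves every other $M$-uncovered vertex uncovered. The expected obstacle is the bookkeeping of exactly which vertices $M'$ exposes, which I would settle by a short case split on $t$: if $t\ne u$, then $u$ is still $M'$-uncovered and $\{u,z\}$ is an $M'$-uncovered pair with $d_G(u,z)=1<d_G(u,v)$; if $t=u$, then $v\ne t$ is still $M'$-uncovered and $\{z,v\}$ is an $M'$-uncovered pair with $d_G(z,v)=d_G(u,v)-1<d_G(u,v)$. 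Either way this contradicts the minimality of $d_G(u,v)$, completing the proof.
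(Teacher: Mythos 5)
Your proof is correct, but it takes a genuinely different route from the paper. The paper never proves Theorem~\ref{gallai.old} combinatorially at all: it is quoted as classical background, and what the paper actually establishes is the weighted-matching-polynomial generalization, Theorem~\ref{gallai}, proved analytically through graph continued fractions --- the sign partition $[n]=-_{\theta,G}\sqcup 0_{\theta,G}\sqcup +_{\theta,G}\sqcup\infty_{\theta,G}$, Lemmas~\ref{zeroset} and~\ref{internal}, and the Stability Lemma~\ref{stability}. Via Godsil's specialization (vertex weights $x$, edge weights $-1$, $\theta=0$) one has $m_0(G)=\operatorname{def}(G)$ and $0_{0,G}=D_G$, so Theorem~\ref{gallai} contains Theorem~\ref{gallai.old} as the special case $\operatorname{def}(G)=m_0(G)=1$. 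Your argument is instead the standard elementary one: reduce to showing $\operatorname{def}(G)\le 1$, take a closest exposed pair $\{u,v\}$ over all maximum matchings, use factor-criticality to get a maximum matching $N$ missing the intermediate vertex $z$, and toggle along the component $P$ of $M\triangle N$ containing $z$. Your bookkeeping is sound: $t$ is the unique $M$-exposed vertex on $P$ (interior vertices of $P$ are covered by both matchings), $P$ cannot be $N$-augmenting since $N$ is maximum, and the degenerate possibilities ($P$ a cycle or a single vertex, or $z\in\{u,v\}$) are excluded because $z$ is $M$-covered and $N$-exposed; hence the case split $t\ne u$ versus $t=u$ really does produce an exposed pair at strictly smaller distance, contradicting minimality. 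What your approach buys is self-containedness and freedom from any analytic machinery; what the paper's approach buys is generality --- the continued-fraction proof yields the multiplicity-one statement at every zero $\theta$ of every weighted matching polynomial, a setting in which augmenting-path arguments are unavailable.
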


Denote by $D_G$ the set of essential vertices of $G$. Write $A_G$ for the frontier of $D_G$, i.e. the set of vertices which are not in $D_G$ but have a neighbor in $D_G$, and define $C_G$ as $V(G)\setminus (D_G\sqcup A_G)$. 

\begin{THEO}(Gallai-Edmonds structure theorem~\cite{gallai1963kritische,edmonds1965paths})\label{gallai-edmonds} For every graph $G$ it holds:
\begin{enumerate}[label=(\alph*)]
	\item the components of the subgraph induced by $D_G$ are factor-critical;
	\item the subgraph induced by $C_G$ has a perfect matching;
	\item let $S$ be a nonempty subset of $A_G$. Then there are at least $|S|+1$ components of $D_G$ which are connected to a vertex in $S$ in the graph $G$; 
	\item if $M$ is any maximum matching of $G$, it contains a near perfect matching of each component of $D_G$, a perfect matching of each component of $C_G$ and matches all vertices of $A_G$ with vertices in distinct components of $D_G$;
	\item if $def(G)$ is the number of vertices left uncovered by a maximum matching in $G$, then $def(G)=c(D_G)-|A_G|$, where $c(D_G)$ denotes the number of connected components of the graph spanned by $D_G$.
\end{enumerate}
\end{THEO}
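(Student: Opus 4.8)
The plan is to derive the whole decomposition from the behaviour of maximum matchings under alternating-path surgery, using Gallai's lemma (Theorem~\ref{gallai.old}) to control the factor-critical pieces. Write $\nu(G)$ for the maximum matching size; a vertex $v$ lies in $D_G$ exactly when $\nu(G-v)=\nu(G)$, so the vertices of $A_G\cup C_G$ are precisely those saturated by every maximum matching. The basic tool is the surgery lemma: if $M$ is maximum and misses $v$, and $P$ is an $M$-alternating path from $v$ beginning with a non-matching edge and ending with a matching edge at $w$, then $M\triangle P$ is maximum and misses $w$ instead of $v$. Taking $P$ of length two yields the propagation principle: if $M$ misses $v\in D_G$ and $u$ is a neighbour of $v$, then $u$ is matched, say to $w$, and $w\in D_G$. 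Moreover no edge of $G$ joins two distinct components of the subgraph spanned by $D_G$ (it is an induced subgraph), so each component meets the rest of $G$ only through $A_G$.

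The crux is part $(a)$. For a component $K$ of $G[D_G]$ I would use the surgery lemma to transfer the exposed vertex from any vertex of $K$ to an adjacent one; propagating across $K$ shows that for every $x\in K$ there is a maximum matching of $G$ that misses $x$ and is near-perfect on $K$, so $K-x$ has a perfect matching and $K$ is factor-critical — in particular $|K|$ is odd. Showing that these transfers remain inside $K$ and never expose a second vertex of $K$ is the delicate step, and I expect it to be the main obstacle: it requires analysing the alternating path together with the connectedness of $K$, and it is here that Gallai's lemma fixes the near-perfect behaviour on each component.

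Granting $(a)$, a parity count controls every maximum matching $M$. Each component $K$ of $G[D_G]$ is odd and is joined to the rest of $G$ only through $A_G$; if $K$ sends $x_K$ vertices to $A_G$ and exposes $e_K$, then $x_K+e_K$ is odd, so $e_K\ge 1-x_K$, and summing with $\sum_K x_K\le|A_G|$ gives $\mathrm{def}(G)=\sum_K e_K\ge c(D_G)-|A_G|$. For the matching inside $C_G$, note that $C_G$-vertices are saturated and have neighbours only in $A_G\cup C_G$; a short separate argument (for instance induction on $|V(G)|$) shows $G[C_G]$ is perfectly matchable, which is $(b)$.

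It remains to prove the expansion $(c)$ and to close the deficiency estimate. For $(c)$ I would argue by contradiction: if some $S\subseteq A_G$ met at most $|S|$ components of $D_G$, then a defect-Hall count on the bipartite graph between $S$ and those components, using their factor-criticality from $(a)$, would produce a maximum matching exposing a vertex of $S$, contradicting $S\subseteq A_G$; I regard this as the secondary difficulty. The surplus Hall condition of $(c)$ then provides a system of distinct representatives saturating $A_G$ into distinct components of $D_G$; combining it with the near-perfect matchings of the remaining components from $(a)$ and the perfect matching of $G[C_G]$ from $(b)$ produces a matching with exactly $c(D_G)-|A_G|$ exposed vertices. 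Hence equality holds in the parity bound, giving the formula in $(e)$, and every maximum matching must attain this equality, which forces the structure described in $(d)$.
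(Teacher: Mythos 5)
Your route is the classical alternating-path one, and it is genuinely different from the paper's: the paper never proves Theorem~\ref{gallai-edmonds} combinatorially, but treats it as the specialization (vertex weights $x$, edge weights $-1$, $\theta=0$, so that $m_0(G)=\mathrm{def}(G)$) of the weighted results of Section~\ref{gallaiedmonds}, which are proved via graph continued fractions with the Stability Lemma~\ref{stability} as the engine. Judged on its own terms, however, your proposal has a genuine gap exactly at the step you yourself flag as delicate: part $(a)$. The length-two surgery does propagate exposure --- if $M$ misses $v$ and $x$ is a neighbour of $v$ in the component $K$, then the $M$-partner $w$ of $x$ lies in $D_G$, and indeed in $K$ --- but this is not the statement you need. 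Every vertex of $K$ is already, by the definition of $D_G$, missed by \emph{some} maximum matching of $G$; what $(a)$ asserts is that $K\setminus x$ has a perfect matching using only edges of $K$. A maximum matching of $G$ missing $x$ may match several vertices of $K$ into $A_G$ and may expose a second vertex of $K$, and no sequence of length-two switches is shown to remove these defects. Moreover, the appeal to Gallai's lemma here is circular: Theorem~\ref{gallai.old} takes as hypothesis that $K$ is factor-critical, i.e.\ that every vertex of $K$ is missed by a maximum matching \emph{of $K$ itself}, which is precisely what is to be proven; being missed by a maximum matching of the ambient graph does not transfer to the component.

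The missing ingredient is a stability statement: for $a\in A_G$ one has $D_{G\setminus a}=D_G$ and $\nu(G\setminus a)=\nu(G)$. Granting this, one deletes all of $A_G$; the components of $G[D_G]$ become entire components of the reduced graph, each of whose vertices is missed by a maximum matching of that component, and only then does Gallai's lemma legitimately yield $(a)$; parts $(b)$ and $(c)$ fall out of the same reduction. Note that this is the exact combinatorial shadow of Theorem~\ref{kuwong.stability} and of the paper's Lemma~\ref{stability}: in both the classical and the polynomial treatments, the crux is that deleting a frontier vertex leaves the decomposition unchanged, and your sketch never formulates this. Your secondary steps inherit the problem: the ``defect-Hall count'' for $(c)$ does not work as stated, since the assumption that $S$ meets at most $|S|$ components is not a Hall violation (one needs strict inequality to force an exposed vertex of $S$), whereas $(c)$ does follow easily from stability plus the deficiency formula, just as Corollary~\ref{matched} follows from Corollary~\ref{multiplicity} and Lemma~\ref{stability} in the paper. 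On the other hand, once $(a)$--$(c)$ are in place, your parity count, the representative construction for the upper bound, and the equality analysis yielding $(d)$ and $(e)$ are correct and standard.
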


Ku and Wong~\cite[p. 3390, Thms. 4.12 and 4.13]{ku2013gallai}, building on the work of Godsil~\cite{godsil1995algebraic} and  Ku and Chen~\cite{ku2010analogue}, generalized the Theorems~\ref{gallai.old} and~\ref{gallai-edmonds} for the context of {\it weighted matching polynomials}. Following a line of investigation pursued by Lovász and Plummer~\cite{lovasz2009matching} Ku and Wong were also able to generalize in the works~\cite{ku2010extensions,ku2013gallai,ku2011generalized,ku2009properties,ku2013generalizing} some other classical concepts of matching theory for weighted matching polynomials. Recently, Bencs and Mészáros~\cite[p. 5-6, Thms. 1.9-1.12]{bencs2020atoms} also proved versions of the theorems by Ku and Wong for infinite and random graphs. 

We briefly recall some facts about weighted matching polynomials in order to state the work of Ku and Wong. Let $G$ be the complete graph with vertex set $[n]$. Define variable weights $x-r_i$ and non-positive weights $\lambda_{jk}$ for each of the vertices and edges, respectively. Denote by $\mathcal{M}_G$ the set of all matchings of $G$ and for simplicity write $i\notin M$ if the vertex $i$ is not covered by the matching $M$. Then the {\it weighted matching polynomial} of $G$ is 

\[\mu(G)(x):=\ds\sum_{M\in\mathcal{M}_G}\ds\prod_{i\not\in M}(x-r_i)\ds\prod_{jk\in M}\lambda_{jk}.\] 
 
The Heilmann-Lieb theorem~\cite[p. 200, Thm. 4.2]{heilmann-lieb} says that all the zeros of $\mu(G)$ are real. Furthermore, it says that for every vertex $i$ the zeros of $\mu(G)$ and $\mu(G\setminus i)$ interlace, i.e. between every two zeros of $\mu(G)$ there is a zero of $\mu(G\setminus i)$ and vice versa. For a real number $\theta$ denote by $m_\theta(G)$ the multiplicity of $\theta$ as a zero of $\mu(G)$. As a consequence of the interlacing of the zeros of $\mu(G)$ and $\mu(G\setminus i)$ it holds that $m_\theta(G\setminus i)$ belongs to $\{m_\theta(G),m_\theta(G)\pm 1\}$. Separate the vertices of $G$ according to:
\begin{itemize}
	\item $i$ is $\theta$-essential if $m_\theta(G\setminus i)=m_\theta(G)-1$;
	\item $i$ is $\theta$-neutral if $m_\theta(G\setminus i)=m_\theta(G)$;
	\item $i$ is $\theta$-positive if $m_\theta(G\setminus i)=m_\theta(G)+1$.
\end{itemize}

With this definition, denote by $D_{\theta,G}$ and $N_{\theta,G}$ the sets of $\theta$-essential and $\theta$-neutral vertices, respectively. Also, denote by $A_{\theta,G}$ the frontier of $D_{\theta,G}$ and by $P_{\theta,G}$ the set of $\theta$-positive vertices which are not in $A_{\theta,G}$. A graph where all vertices are $\theta$-essential is called $\theta$-critical. In this context the analogues of Theorems~\ref{gallai.old} and~\ref{gallai-edmonds} by Ku and Wong are:

\begin{THEO} (The analogue of the Gallai's lemma by Ku and Wong~\cite{ku2013gallai})\label{kuwong.gallai} Let $G$ be a connected $\theta$-critical graph. Then $\theta$ is a simple zero of $\mu(G)$. 
\end{THEO}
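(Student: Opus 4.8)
The plan is to show that the multiplicity $m:=m_\theta(G)$ equals $1$; since $G$ is $\theta$-critical we already know $m\ge 1$, so the whole content is the upper bound. I will work throughout with the basic recurrence obtained by deciding whether the vertex $i$ is covered,
\[
\mu(G)=(x-r_i)\,\mu(G\setminus i)+\sum_{j\sim i}\lambda_{ij}\,\mu(G\setminus\{i,j\}),
\]
together with the differentiation identity $\mu'(G)=\sum_{i}\mu(G\setminus i)$, which follows at once from differentiating the defining sum. The Heilmann--Lieb interlacing makes $\mu(G\setminus i)$ and $\mu(G)$ consecutive monic polynomials with interlacing zeros, so $\mu(G\setminus i)/\mu(G)$ has only simple poles, located at the zeros of $\mu(G)$, all with non-negative residues. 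In this language $i$ is $\theta$-essential precisely when $\mu(G\setminus i)/\mu(G)$ has a pole at $\theta$, i.e. its residue $a_i$ there is strictly positive; taking residues at $\theta$ in $\mu'(G)/\mu(G)=\sum_i \mu(G\setminus i)/\mu(G)$ then gives $\sum_i a_i=m$. The same computation shows that any graph having $\theta$ as a zero possesses at least one $\theta$-essential vertex, a fact I will use repeatedly.

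The first main step is a local restriction coming from inverting the recurrence, which is where the continued-fraction structure enters. Dividing the recurrence by $\mu(G\setminus i)$ yields
\[
\frac{\mu(G)}{\mu(G\setminus i)}=(x-r_i)+\sum_{j\sim i}\lambda_{ij}\,\frac{\mu(G\setminus\{i,j\})}{\mu(G\setminus i)} .
\]
If $i$ is $\theta$-essential, the left-hand side has a simple zero at $\theta$ and in particular no pole there. On the right, each ratio $\mu(G\setminus\{i,j\})/\mu(G\setminus i)$ is again an interlacing quotient for the graph $G\setminus i$, so its residue at $\theta$ is non-negative, while $\lambda_{ij}\le 0$. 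Hence the residue at $\theta$ of the right-hand side is a sum of non-positive terms that must vanish, forcing each of them to vanish. Consequently, for every neighbour $j$ of $i$ the ratio $\mu(G\setminus\{i,j\})/\mu(G\setminus i)$ is regular at $\theta$; equivalently, \emph{every neighbour of a $\theta$-essential vertex $i$ is non-essential in $G\setminus i$}. I regard this neighbour lemma as the heart of the argument: it says the essential mass at $\theta$ cannot sit on two adjacent vertices simultaneously once one of them is deleted.

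It remains to convert this local statement into the global conclusion $m=1$ using connectivity, and this is the step I expect to be the main obstacle. I will argue by contradiction, assuming $m\ge 2$, and mirror the classical alternating-path proof of Gallai's lemma analytically. Choose a non-cut vertex $i$, so that $G\setminus i$ is connected with $m_\theta(G\setminus i)=m-1\ge 1$; it therefore contains a $\theta$-essential vertex, and by the neighbour lemma no neighbour of $i$ qualifies, so some such vertex lies at distance at least two from $i$. Taking $w$ essential in $G\setminus i$ at minimum distance from $i$ and a shortest path $i=x_0,x_1,\dots,x_\ell=w$ with $\ell\ge 2$, minimality forces $x_{\ell-1}$ to be non-essential in $G\setminus i$, while applying the neighbour lemma to $w$ inside $G\setminus i$ forces $x_{\ell-1}$ to be non-essential in $G\setminus\{i,w\}$ as well. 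To turn these multiplicity constraints along the path into a contradiction I will use the quadratic two-vertex identity
\[
\mu(G\setminus u)\,\mu(G\setminus v)-\mu(G)\,\mu(G\setminus\{u,v\})=D_{uv}(x)^2,
\]
where $D_{uv}$ is the path-generating polynomial between $u$ and $v$; comparing the orders of vanishing at $\theta$ of the two sides controls $m_\theta(G\setminus\{u,v\})$ in terms of $a_u$ and $a_v$ and yields a monotonicity of residues under deletion. Feeding the shortest path into this identity should contradict the coexistence of the two constraints on $x_{\ell-1}$, ruling out $m\ge 2$. The delicate point—and the crux of the whole proof—is exactly this connectivity bookkeeping: the neighbour lemma by itself is symmetric enough to remain consistent with disconnected $\theta$-critical graphs of higher multiplicity, such as two disjoint edges at their common zero, so the argument must genuinely exploit a shortest path to exclude those.
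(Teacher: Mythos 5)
Your local step is correct and matches the paper's: dividing the vertex recurrence by $\mu(G\setminus i)$ and comparing residues at $\theta$ (non-negative residues of interlacing quotients against $\lambda_{ij}\le 0$) proves that no neighbour of a $\theta$-essential vertex $i$ is $\theta$-essential in $G\setminus i$. This is exactly Lemma~\ref{internal} of the paper translated into residue language (there: $i\in\infty_{\theta,G}$ if and only if some neighbour of $i$ lies in $0_{\theta,G\setminus i}$), and it is the easy ingredient. A minor inaccuracy: the right-hand side of the two-vertex identity is the weighted sum of squares $\sum_{c\in[u\to v]}\lambda_c\,\mu(G\setminus c)^2$ of Lemma~\ref{Christoffel-Darboux}, not a single square $D_{uv}(x)^2$; only its pointwise non-negativity survives in general, though that is the property you actually use.

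The genuine gap is the global step, which you leave as a plan (``should contradict'', ``the main obstacle''), and the plan as stated aims at something that is not a contradiction. The two constraints you assemble on $x_{\ell-1}$ --- non-essential in $G\setminus i$ (minimality) and non-essential in $G\setminus\{i,w\}$ (neighbour lemma at $w$) --- are statements of the same type and coexist without tension; order-of-vanishing bookkeeping in Christoffel--Darboux only sharpens them. For instance, applying the identity to the adjacent pair $(x_{\ell-1},w)$ in $G\setminus i$ and comparing the (even) order of the sum-of-squares side with the orders of the two products forces $m_\theta(G\setminus\{i,w,x_{\ell-1}\})=m-1$; far from being absurd, this is precisely what the true structure predicts, namely that $x_{\ell-1}$ is $\theta$-positive there. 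The reason no such counting can close the argument is that multiplicities are blind to the distinction the paper introduces among non-essential vertices (the sign of $\mu(G)/\mu(G\setminus j)$ at $\theta$: positive, negative, or infinite). The paper's proof of this theorem needs exactly that refinement: the frontier property $\partial 0_{\theta,G\setminus i}\subseteq\infty_{\theta,G\setminus i}$ (Proposition~\ref{frontier}), the sign table (Figure~\ref{table}) to conclude $i\in\infty_{\theta,G\setminus x_{\ell-1}}$ and hence produce a neighbour $k$ of $i$ in $0_{\theta,G\setminus\{x_{\ell-1},i\}}$, and, crucially, the Stability Lemma (Theorem~\ref{stability}) to transport $k\in 0_{\theta,G\setminus\{i,x_{\ell-1}\}}$ back to $k\in 0_{\theta,G\setminus i}$, contradicting the neighbour lemma at $i$. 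That stability statement is the paper's main theorem (and the heart of Ku--Wong's original argument), proved by a perturbation and sign analysis for which your proposal offers no substitute. So what you have is a correct proof of the local lemma plus a setup; the actual content of the Gallai analogue remains unproved.
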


\begin{THEO}(The analogue of the Gallai-Edmonds structure theorem by Ku and Wong~\cite{ku2013gallai})\label{kuwong.gallaiedmonds} Let $\theta$ be a zero of $\mu(G)$. Then:
\begin{enumerate}[label=(\alph*)]
	\item the components of the subgraph induced by $D_{\theta,G}$ are $\theta$-critical;
	\item any vertex of the subgraph spanned by $N_{\theta,G}\sqcup P_{\theta,G}$ is not $\theta$-essential in $N_{\theta,G}\sqcup P_{\theta,G}$;
	\item $m_\theta(G)=c(D_{\theta,G})-|A_{\theta,G}|$.
\end{enumerate}
\end{THEO}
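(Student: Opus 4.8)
The engine of the proof is the vertex–deletion recurrence
\[
\mu(G)=(x-r_i)\,\mu(G\setminus i)+\sum_{j\sim i}\lambda_{ij}\,\mu(G\setminus\{i,j\}),
\]
together with the rational function $F_i^G:=\mu(G\setminus i)/\mu(G)$. My plan is to read off the type of a vertex at $\theta$ from the local behaviour of $F_i^G$ there. By the Heilmann–Lieb theorem the zeros of $\mu(G)$ and $\mu(G\setminus i)$ interlace and both polynomials are monic, so $F_i^G$ is a Stieltjes-type function: its partial-fraction expansion $\sum_k c_k/(x-\theta_k)$ has all residues $c_k>0$, the poles being the zeros of $\mu(G)$. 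Comparing multiplicities, $i$ is $\theta$-essential precisely when $F_i^G$ has a (necessarily simple) pole at $\theta$; it is $\theta$-positive precisely when $F_i^G$ vanishes at $\theta$; and it is $\theta$-neutral precisely when $F_i^G(\theta)$ is finite and non-zero. Equivalently, with $Q_i^G:=1/F_i^G=\mu(G)/\mu(G\setminus i)$, the three cases correspond to $Q_i^G$ having a simple zero, a pole, or a finite non-zero value at $\theta$.

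First I would establish the key propagation lemma by dividing the recurrence by $\mu(G\setminus i)$, giving the branched continued fraction
\[
Q_i^G=(x-r_i)+\sum_{j\sim i}\lambda_{ij}\,F_j^{G\setminus i}.
\]
Here the sign condition $\lambda_{ij}\le 0$ is decisive: every neighbour $j$ that is $\theta$-essential in $G\setminus i$ contributes a simple pole with residue $\lambda_{ij}c_j<0$, and since all these residues share the same sign they cannot cancel. Hence $Q_i^G$ has a pole at $\theta$ — so $i$ is $\theta$-positive in $G$ — if and only if $i$ has a neighbour that is $\theta$-essential in $G\setminus i$; and when no such neighbour exists the right-hand side is finite at $\theta$, so $i$ is $\theta$-essential or $\theta$-neutral according as this finite value vanishes or not. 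The immediate corollary is that if $i$ is $\theta$-essential or $\theta$-neutral, then no neighbour of $i$ is $\theta$-essential in $G\setminus i$; this non-cancellation is the mechanism that forces the clean block structure.

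Using this dictionary I would prove, by induction on $|V(G)|$, the two structural facts behind (a) and (b): that the frontier $A_{\theta,G}$ consists of $\theta$-positive vertices (so $D$, $A$, $N$, $P$ are genuinely disjoint), and that within each component $H$ of $G[D_{\theta,G}]$ every vertex is $\theta$-essential, i.e. $H$ is $\theta$-critical. The identity $\mu'(G)=\sum_i\mu(G\setminus i)$ yields $m_\theta(G)=\sum_{i}\operatorname{res}_\theta F_i^G=\sum_{i\in D}c_i$ with $c_i>0$, so that $m_\theta(H)=0$ is equivalent to $H$ having no $\theta$-essential vertex; this turns (b) into the concrete target $\mu(G[N\sqcup P])(\theta)\neq 0$. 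The propagation lemma is what I expect to carry both facts: it transfers essentiality between $G$ and the relevant induced subgraphs and shows that a neutral vertex can never be adjacent to an essential one.

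Finally I would deduce the count (c) from (a), (b) and Gallai's lemma (Theorem \ref{kuwong.gallai}). Once $A_{\theta,G}$ is known to be $\theta$-positive, $G\setminus A_{\theta,G}$ has no edge joining $D_{\theta,G}$ to $N\sqcup P$, so $\mu(G\setminus A_{\theta,G})=\mu(G[D_{\theta,G}])\cdot\mu(G[N\sqcup P])$; by (a) and Theorem \ref{kuwong.gallai} each of the $c(D_{\theta,G})$ components of $G[D_{\theta,G}]$ contributes a simple zero, while by (b) the second factor is non-zero at $\theta$, giving $m_\theta(G\setminus A_{\theta,G})=c(D_{\theta,G})$. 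It then remains to show that deleting the $|A_{\theta,G}|$ frontier vertices raises the multiplicity by exactly one at each step, i.e. $m_\theta(G\setminus A_{\theta,G})=m_\theta(G)+|A_{\theta,G}|$, whence $m_\theta(G)=c(D_{\theta,G})-|A_{\theta,G}|$. I expect the main obstacle to be exactly this additivity together with the stability required by the induction: since the type of a vertex is defined through deletion, chaining the continued fraction across successive deletions forces one to verify that positivity of the frontier and essentiality inside $D$ persist in $G\setminus a$ for $a\in A_{\theta,G}$ (equivalently, that each such deletion moves one $A$-vertex onto a distinct $D$-component, the analogue of the Hall/König condition (c) of Theorem \ref{gallai-edmonds}). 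Controlling how the sets $D,A,N,P$ deform under a single deletion, while keeping the sign-definiteness intact, is where the real work lies.
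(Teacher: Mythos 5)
Your framework coincides with the paper's: your $F_i^G$ is the reciprocal of the paper's graph continued fraction $\alpha_i(G)=\mu(G)/\mu(G\setminus i)$, your dictionary (pole/zero/finite value at $\theta$ versus essential/positive/neutral) is exactly the paper's partition into $0_{\theta,G}$, $\infty_{\theta,G}$, $\pm_{\theta,G}$, your propagation lemma with its non-cancellation-of-residues argument is the paper's Lemma \ref{internal}, and your computation for part $(c)$ --- delete $A_{\theta,G}$, factor $\mu(G\setminus A_{\theta,G})=\mu(G[D_{\theta,G}])\cdot\mu(G[N\sqcup P])$, apply Gallai's lemma to each component, and use the additivity $m_\theta(G\setminus A_{\theta,G})=m_\theta(G)+|A_{\theta,G}|$ --- is precisely the paper's Corollary \ref{multiplicity}. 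So the skeleton is right.

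However, there is a genuine gap, and it sits exactly where you park it: the claims you defer to an unspecified ``induction on $|V(G)|$'' --- that $A_{\theta,G}$ consists of $\theta$-positive vertices, that each component of $G[D_{\theta,G}]$ is $\theta$-critical, and that each deletion of a frontier vertex raises the multiplicity by exactly one while preserving the partition --- are not consequences of the propagation lemma; they \emph{are} the theorem. The obstruction is a circularity your sketch never breaks: the propagation lemma tells you that $i\in\infty_{\theta,G}$ iff some neighbour $j$ is essential \emph{in $G\setminus i$}, whereas the frontier fact requires passing from ``$j$ essential in $G$'' to ``$j$ essential in $G\setminus i$'', which is itself an instance of the stability you are trying to prove. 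The paper breaks this circle with an ingredient absent from your proposal: the Contraction Lemma \ref{Contraction}, derived from the Christoffel--Darboux identity (Lemma \ref{Christoffel-Darboux}), which gives the symmetric two-vertex relation $\alpha_i(G)=\alpha_i(G\setminus j)+\lambda_{i\sim j}/\alpha_j(G\setminus i)$ in which $-\lambda_{i\sim j}$ is a sum of squares, hence sign-definite. It is this relation --- coupling the values of $\alpha_i$ and $\alpha_j$ in $G$, $G\setminus j$ and $G\setminus i$ simultaneously, and supplemented by perturbation arguments in the vertex weights --- that yields the case analysis of Figure \ref{table}, then Proposition \ref{frontier} ($\partial 0_{\theta,G}\subseteq\infty_{\theta,G}$), and then the Stability Lemma \ref{stability}; note also that Gallai's lemma (your Theorem \ref{kuwong.gallai}), which you invoke as an ingredient for $(c)$, is itself proved in this framework only after the Stability Lemma is available. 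Without the contraction identity or a substitute for it, your induction has no mechanism to close, so parts $(a)$, $(b)$ and the additivity in $(c)$ remain unproven.
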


As observed by Godsil~\cite[p. 1]{godsil1995algebraic}, if the vertex and edge weights of $G$ are $x$ and $-1$, respectively, then for $\theta$ equal to zero  it holds $m_0(G)=def(G)$ and also $D_{0,G}=D_G$, $A_{0,G}=A_G$ and $N_{0,G}\sqcup P_{0,G}=C_G$. This shows that Theorems~\ref{kuwong.gallai} and~\ref{kuwong.gallaiedmonds} generalize the Theorems~\ref{gallai.old} and~\ref{gallai-edmonds}. The main ingredient in the proof of Theorems~\ref{kuwong.gallai} and~\ref{kuwong.gallaiedmonds} by Ku and Wong is the following result, which appears in~\cite[p. 95, Lem. 3.2.2]{ku2013gallai}. 

\begin{THEO} (Stability theorem by Ku and Wong~\cite{ku2013gallai})\label{kuwong.stability} If $i$ is in $A_{\theta,G}$, then:
\begin{itemize}
	\item $D_{\theta,G\setminus i}=D_{\theta,G}$;
	\item $P_{\theta,G\setminus i}=P_{\theta,G}$;
	\item $N_{\theta,G\setminus i}=N_{\theta,G}$;
	\item $A_{\theta,G\setminus i}=A_{\theta,G}\setminus i$.
\end{itemize}
\end{THEO}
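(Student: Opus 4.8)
The plan is to route the whole argument through the ratio $F_v^H:=\mu(H)/\mu(H\setminus v)$. Splitting the matchings of $H$ according to how the vertex $v$ is covered gives at once the vertex recursion
\[
\mu(H)=(x-r_v)\,\mu(H\setminus v)+\sum_{u\sim v}\lambda_{vu}\,\mu(H\setminus v\setminus u),
\]
and dividing by $\mu(H\setminus v)$ turns this into the branched continued fraction identity $F_v^H=(x-r_v)+\sum_{u\sim v}\lambda_{vu}/F_u^{H\setminus v}$. By the Heilmann-Lieb theorem the zeros of $\mu(H)$ and $\mu(H\setminus v)$ are real and interlace, so $F_v^H$ is a Herglotz function: it is increasing between consecutive poles and has only simple zeros and simple poles. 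The dictionary I will use throughout is that $\mathrm{ord}_\theta F_v^H=m_\theta(H)-m_\theta(H\setminus v)$, so $v$ is $\theta$-essential, $\theta$-neutral or $\theta$-positive in $H$ exactly when $F_v^H$ has a simple zero, a finite nonzero value, or a simple pole at $\theta$.

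Next I would read off the type of $v$ from the recursion. Each $F_u^{H\setminus v}$ is itself Herglotz, and since $\lambda_{vu}\le 0$ the behaviour of the summand $\lambda_{vu}/F_u^{H\setminus v}$ at $\theta$ is controlled by the type of $u$ in $H\setminus v$: an essential $u$ produces a simple pole, a positive $u$ a simple zero, and a neutral $u$ a finite value. The decisive observation is that every pole arising this way has a residue of the same sign (negative, because $\lambda_{vu}<0$ and the increasing Herglotz function $F_u^{H\setminus v}$ crosses zero from below), so these poles cannot cancel. Hence the clean rule that $v$ is $\theta$-positive in $H$ if and only if $v$ has a neighbour that is $\theta$-essential in $H\setminus v$; and when $v$ has no such neighbour the sum is regular at $\theta$, its order is capped at $+1$ by interlacing, and $v$ is $\theta$-essential or $\theta$-neutral according as the residual value $(\theta-r_v)+\sum_u\lambda_{vu}/F_u^{H\setminus v}(\theta)$, taken over the neutral neighbours, vanishes or not. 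Call these the positivity rule and the value criterion.

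With these in hand I would establish the lemma in three moves. First, that every $i\in A_{\theta,G}$ is $\theta$-positive: by definition $i$ is not essential but has a neighbour $w$ that is essential in $G$, and I would show $w$ remains essential in $G\setminus i$ and apply the positivity rule; this already places $i$ outside $N_{\theta,G}$ and outside $P_{\theta,G}$, as the statement demands. Second, that the type of every $v\ne i$ is unchanged after deleting $i$. This I would prove by induction on $|V(G)|$: the type of $v$ in $G$ and in $G\setminus i$ is governed, through the positivity rule and the value criterion, by the types of its neighbours in $G\setminus v$ and in $G\setminus i\setminus v$ respectively, and applying the inductive Stability Lemma inside the smaller graph $G\setminus v$ — where $i$ is again a frontier vertex of $D_{\theta,G\setminus v}$ — matches these neighbour-types and preserves the residual value. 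Third, the set identities follow by bookkeeping: the essential set $D_{\theta,G}$, the positive set and the neutral set are preserved, and because $i\notin D_{\theta,G}$ and the adjacencies among $V(G)\setminus i$ are untouched, the frontier of the unchanged set $D_{\theta,G}$ computed inside $G\setminus i$ is exactly $A_{\theta,G}\setminus i$; the identities for $P_{\theta,G\setminus i}$ and $N_{\theta,G\setminus i}$ then drop out.

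I expect the real obstacle to be the $\theta$-essential versus $\theta$-neutral dichotomy, equivalently the exact determination of $D_{\theta,G}$. Unlike $\theta$-positivity, which the positivity rule makes purely combinatorial, this dichotomy is decided by the value criterion — a genuinely numerical, non-combinatorial condition — and the whole lemma rests on showing that this condition is inherited verbatim when $i$ is removed. Controlling it seems to need more than the order of $F_v$: one must carry the full Herglotz/continued-fraction data through the induction and exploit that the essential neighbour $w$ of the deleted vertex produces a pole in the relevant sub-fraction which shields the value from the deletion. Making this shielding precise, together with the first-step claim that $w$ stays essential in $G\setminus i$ and the verification that $i$ persists as a frontier vertex in the graphs $G\setminus v$, is where I expect the bulk of the effort to go.
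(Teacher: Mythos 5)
Your framework is sound as far as it goes: the Herglotz dictionary for $\mu(H)/\mu(H\setminus v)$ and your ``positivity rule'' are exactly the paper's Corollary \ref{derivative1} and Lemma \ref{internal}, and your residue argument for non-cancellation of poles is correct. But the three moves you build on it do not close, and two of the gaps are structural rather than technical. First, your opening move is circular: to conclude via the positivity rule that $i\in A_{\theta,G}$ is $\theta$-positive, you must first know that its essential neighbour $w$ stays essential in $G\setminus i$ --- but that is an instance of the very identity $D_{\theta,G\setminus i}=D_{\theta,G}$ being proved, and you give no independent mechanism for it (interlacing alone does not yield it). The paper obtains frontier positivity with no stability input at all: the Christoffel-Darboux identity (Lemma \ref{Christoffel-Darboux}) gives the Contraction Lemma \ref{Contraction}, $\alpha_i(G)=\alpha_i(G\setminus j)+\lambda_{i\sim j}/\alpha_j(G\setminus i)$, where $-\lambda_{i\sim j}$ is a weighted sum of squares over all paths from $i$ to $j$ and is nonzero whenever $i$ and $j$ are adjacent; the resulting sign table (Propositions \ref{relative0}--\ref{relative2}, Figure \ref{table}) then forces any neighbour of a vertex of $0_{\theta,G}$ to lie in $0_{\theta,G}\sqcup\infty_{\theta,G}$ (Proposition \ref{frontier}). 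Second, and more seriously, the premise of your induction is false. You invoke the stability lemma inside $G\setminus v$ ``where $i$ is again a frontier vertex of $D_{\theta,G\setminus v}$'', but deleting $v$ can destroy this. Take the path $1-2-3$ with vertex weights $x$, edge weights $-1$, and $\theta=0$: then $D_{0,G}=\{1,3\}$ and $A_{0,G}=\{2\}$, but for $v=1$ the graph $G\setminus v$ is the single edge $2-3$, whose matching polynomial $x^2-1$ does not vanish at $0$, so $D_{0,G\setminus 1}=\emptyset$ and $i=2$ is not a frontier vertex there; the induction hypothesis cannot be invoked, yet you still must show that vertex $1$ keeps its type when $2$ is deleted. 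In general $D_{\theta,G\setminus v}$ bears no simple relation to $D_{\theta,G}$ --- describing such relations is the content of the Gallai-Edmonds theorem itself --- so an induction that descends to $G\setminus v$ cannot be set up this way.

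The crux you correctly flag --- preserving the essential-versus-neutral dichotomy, a numerical rather than combinatorial condition --- is precisely what the paper's refinement is designed to handle, and it shows what is missing from your outline. Instead of preserving types, the paper proves the stronger statement that the values themselves are preserved: $\alpha_j(G\setminus i)(\theta)=\alpha_j(G)(\theta)$ for every $j\neq i$ (Theorems \ref{newstability} and \ref{stability}), from which all four set identities of the Ku-Wong statement follow at once. For $j\in\pm_{\theta,G}$ this is Proposition \ref{preservespm}, proved by running the sign table and applying Proposition \ref{frontier} inside $G\setminus j$, then item $(f)$ of Proposition \ref{relative1}; the one remaining danger, a vertex in $0_{\theta,G\setminus i}\cap\infty_{\theta,G}$, is excluded by a perturbation argument (replace the weight of $i$ by $x-r_i+\epsilon$ and chase the sign table to a contradiction with Proposition \ref{frontier}). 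All of this runs through the coupling constants $\lambda_{i\sim j}$, which control the interaction between $j$ and the deleted vertex $i$ even when they are far apart --- exactly the path-mediated influence that your recursion-based residue argument, which only sees nearest neighbours, cannot reach. If you want to salvage your outline, the missing ingredient is the Christoffel-Darboux/contraction identity (or an equivalent, such as Godsil's $0$-path machinery); the Herglotz dictionary alone is not enough.
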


In this work we present refinements of Theorems~\ref{kuwong.gallaiedmonds} and~\ref{kuwong.stability} with a simpler proof. The main novelty is the following new stability lemma which gives more precise information of how a matching polynomial changes when a vertex in $A_{\theta,G}$ is deleted. 

\begin{thm}(Stability lemma)\label{newstability} If $i$ is in $A_{\theta,G}$, then $\dfrac{\mu(G\setminus i)}{\mu(G\setminus \{ i,j\})}(\theta)=\dfrac{\mu(G)}{\mu(G\setminus j)}(\theta)$ for every vertex $j$ different from $i$.
\end{thm}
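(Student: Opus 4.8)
The plan is to regard the two sides as values in $\R\cup\{\infty\}$ of the rational functions $\sigma:=\mu(G)/\mu(G\setminus j)$ and $\rho:=\mu(G\setminus i)/\mu(G\setminus i,j)$, and to prove $\rho(\theta)=\sigma(\theta)$ by controlling their difference. The basic tool is the vertex-deletion recurrence $\mu(G)=(x-r_i)\mu(G\setminus i)+\sum_{k\sim i}\lambda_{ik}\,\mu(G\setminus i,k)$, whose iteration is precisely the branched continued fraction expansion of $\mu(G)/\mu(G\setminus i)$. From it I only extract the dictionary between the order of a ratio and a vertex type: for any graph $H$ and vertex $v$, the value of $\mu(H)/\mu(H\setminus v)$ at $\theta$ is $0$, a nonzero real, or $\infty$ according as $v$ is $\theta$-essential, $\theta$-neutral or $\theta$-positive in $H$, since $\mathrm{ord}_\theta\big(\mu(H)/\mu(H\setminus v)\big)=m_\theta(H)-m_\theta(H\setminus v)\in\{+1,0,-1\}$.

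The engine is the elementary identity
\[
\rho-\sigma=\frac{\Delta_{ij}}{\mu(G\setminus i,j)\,\mu(G\setminus j)},\qquad \Delta_{ij}:=\mu(G\setminus i)\mu(G\setminus j)-\mu(G)\mu(G\setminus i,j),
\]
together with a positivity statement for $\Delta_{ij}$. I would prove, by induction through the deletion recurrence, the path identity
\[
\Delta_{ij}=\sum_{P:\,i\to j}\Big(\prod_{e\in E(P)}(-\lambda_e)\Big)\,\mu\big(G\setminus V(P)\big)^2,
\]
the sum running over simple $i$–$j$ paths $P$. Because each edge weight satisfies $\lambda_e\le 0$, every summand is nonnegative, so $\Delta_{ij}(x)\ge 0$ for all real $x$; consequently $\theta$ is a root of $\Delta_{ij}$ of \emph{even} multiplicity. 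This parity is the one feature of positivity that the argument needs.

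Now assume $i\in A_{\theta,G}$. Being in the frontier of $D_{\theta,G}$, the vertex $i$ is $\theta$-positive, so $m_\theta(G\setminus i)=m_\theta(G)+1$; write $a:=m_\theta(G)$. The statement that $j$ has the same type in $G\setminus i$ as in $G$—equivalently $\mathrm{ord}_\theta\rho=\mathrm{ord}_\theta\sigma$—is exactly the type-preservation of Theorem~\ref{kuwong.stability} (and can also be recovered inline from interlacing together with the recurrence at $i$, using that $i$ has a $\theta$-essential neighbour). Granting it, if $j$ is $\theta$-essential then $\rho(\theta)=\sigma(\theta)=0$, and if $j$ is $\theta$-positive then both are $\infty$; in either case equality is immediate from the dictionary. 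The only remaining case is $j$ $\theta$-neutral, where $m_\theta(G\setminus j)=a$ and $m_\theta(G\setminus i,j)=a+1$, so both $\rho$ and $\sigma$ are finite; then $\rho-\sigma$ is regular at $\theta$, i.e. $\mathrm{ord}_\theta(\rho-\sigma)\ge 0$. But by the displayed identity this order equals $\mathrm{ord}_\theta\Delta_{ij}-\big(m_\theta(G\setminus i,j)+m_\theta(G\setminus j)\big)=\mathrm{ord}_\theta\Delta_{ij}-(2a+1)$, which is \emph{odd} because $\mathrm{ord}_\theta\Delta_{ij}$ is even. An order that is both $\ge 0$ and odd is $\ge 1$, so $\rho-\sigma$ vanishes at $\theta$ and $\rho(\theta)=\sigma(\theta)$.

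The main obstacle is the positivity of $\Delta_{ij}$: it is genuinely a \emph{sum of squares} rather than a single square once $G$ has cycles—already for a triangle on $\{i,j,k\}$ one computes $\Delta_{ij}=\lambda_{ik}\lambda_{jk}-\lambda_{ij}(x-r_k)^2$—so one cannot certify $\Delta_{ij}\ge 0$ by a single Jacobi/Dodgson minor and must instead establish the path identity above. The rest is bookkeeping, with the subtle point being that the neutral case is exactly where the refinement over Theorem~\ref{kuwong.stability} lives: interlacing and type-preservation only locate the common order of $\rho$ and $\sigma$, and it is the even order of $\Delta_{ij}$ forced by positivity that turns ``same type'' into ``same value''.
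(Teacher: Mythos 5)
Your proof is correct, but it takes a genuinely different route from the paper's. The paper proves Theorem~\ref{newstability} (as Theorem~\ref{stability}) from scratch: it builds the contraction identity (Lemma~\ref{Contraction}) out of Christoffel--Darboux, classifies the joint sign configurations of $\alpha_i$, $\alpha_j$ in $G$, $G\setminus i$, $G\setminus j$ (Propositions~\ref{finitelambda}--\ref{relative2}), and then combines Proposition~\ref{preservespm} with a perturbation argument; the Ku--Wong stability lemma (Theorem~\ref{kuwong.stability}) is obtained there as a \emph{corollary}, never used. You go the other way: you import Theorem~\ref{kuwong.stability} as a black box to get type preservation, which settles the $\theta$-essential and $\theta$-positive cases immediately ($0=0$ and $\infty=\infty$), and you upgrade ``same type'' to ``same value'' in the remaining $\theta$-neutral case by a parity trick: your $\rho-\sigma$ is regular at $\theta$, its order equals $\mathrm{ord}_\theta\Delta_{ij}-(2a+1)$, and $\mathrm{ord}_\theta\Delta_{ij}$ is even by nonnegativity of the Christoffel--Darboux sum, so the order is odd, hence at least $1$. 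That argument is sound (after disposing of the trivial case $\Delta_{ij}\equiv 0$) and is an elegant substitute for the paper's mechanism, where the correction term $\lambda_{j\sim i}/\alpha_i(G\setminus j)$ vanishes because $\alpha_i(G\setminus j)(\theta)=\infty$. There is no circularity, since Theorem~\ref{kuwong.stability} has an independent proof in \cite{ku2013gallai}. What your route buys is brevity and a clean isolation of exactly where the refinement lives (same type plus even Christoffel--Darboux order forces same value); what it costs is self-containedness: the hard content is assumed rather than reproven, so your argument cannot serve the paper's stated goal of giving a new, simpler proof of Ku--Wong's results along the way.

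Two caveats. First, your parenthetical claim that type preservation ``can also be recovered inline from interlacing together with the recurrence at $i$'' is unsubstantiated; that statement is precisely the hard core of the subject (it is what Section~\ref{gallaiedmonds} of the paper, or Ku--Wong's original argument, exists to prove), so the appeal to Theorem~\ref{kuwong.stability} is essential to your proof, not optional. Second, you assert without justification that $i\in A_{\theta,G}$ is $\theta$-positive; this is the paper's Proposition~\ref{frontier}, and in your setting it also follows formally from Theorem~\ref{kuwong.stability} itself (if $i$ were $\theta$-neutral, then $N_{\theta,G\setminus i}=N_{\theta,G}$ would have to contain $i$, which is impossible). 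It deserves a line, because without $m_\theta(G\setminus i)=m_\theta(G)+1$ the count $2a+1$ loses its odd parity and the neutral case collapses.
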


Our proof uses a connection between matching polynomials and branched continued fractions which was originally observed by Viennot~\cite[p. 149]{viennot1985combinatorial}. For this reason the proof is inspired by results in the theories of continued fractions and orthogonal polynomials.
	
In the course of the proof we obtain a generalization for weighted matching polynomials of the following modification by Sylvester~\cite{sylvester1853lxxi} of the classical Sturm's theorem~\cite{sturm2009memoire} (or see~\cite[p. 305, Thm. 7.10]{khrushchev_orthogonal}) on the number of zeros of a real polynomial in an interval. 

Consider two monic real polynomials $p(x)$ and $q(x)$ of degrees $n$ and $n-1$, respectively, with real and distinct zeros. Assume that the zeros of $p$ and $q$ are different and interlace. In particular, one can take $q$ as the derivative of $p$ divided by $n$. It is known, as can be seen in~\cite[p. 141, Lem. 5.1]{godsil2013algebraic}, that performing the euclidean algorithm for $p$ and $q$ results in:

\[\dfrac{p}{q}(x)=x-r_1+\dfrac{\lambda_1}{x-r_2+\dfrac{\lambda_2}{\dots+\dfrac{\lambda_{n-1}}{x-r_n}}},\]

\noindent where $r_i$ is a real number and $\lambda_i$ is negative for every $i$. The sequence of partial numerators of this continued fraction is known as the Sturm sequence for the pair $(p,q)$ and is the initial segment of an orthogonal polynomial sequence. 

Denote by $\tau_i$ and $\hat{\tau}_i$ the continued fractions

\[\tau_i(x):=x-r_i+\dfrac{\lambda_i}{x-r_{i+1}+\dfrac{\lambda_{i+1}}{\dots+\dfrac{\lambda_{n-1}}{x-r_n}}},\quad \hat{\tau}_i(x):=x-r_i+\dfrac{\lambda_{i-1}}{x-r_{i-1}+\dfrac{\lambda_{i-2}}{\dots+\dfrac{\lambda_1}{x-r_1}}}.\]

For a real number $\theta$ let $V(\theta)$ and $\hat{V}(\theta)$ be the number of positive terms in the sets $\{\tau_1(\theta),\tau_2(\theta),\dots,\tau_n(\theta)\}$ and $\{\hat{\tau}_1(\theta),\hat{\tau}_2(\theta),\dots,\hat{\tau}_n(\theta)\}$, respectively. Note that except for a finite number of values for $\theta$ both sets have only positive and negative elements. 

\begin{THEO} (Sylvester modification of Sturm's theorem~\cite{sturm2009memoire})\label{sylvester} Both $V(\theta)$ and $\hat{V}(\theta)$, when defined, are equal to the number of zeros of $p(x)$ in the interval $(-\infty,\theta)$.
\end{THEO}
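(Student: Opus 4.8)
\section*{Proof proposal}

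The plan is to realize each tail $\tau_i$ and $\hat{\tau}_i$ as a ratio of two consecutive members of a Sturm chain and then track how many of these ratios are positive as $\theta$ sweeps the real line. First I would clear denominators in the defining recurrence $\tau_i = (x-r_i) + \lambda_i/\tau_{i+1}$. Writing $\tau_i = u_i/u_{i+1}$ forces the three-term recurrence $u_i = (x-r_i)\,u_{i+1} + \lambda_i\,u_{i+2}$ with $u_{n+1}=1$ and $u_{n+2}=0$; by induction the $u_i$ are monic of degree $n-i+1$, with $u_1 = p$ and $u_2 = q$. Since $\lambda_i<0$, at any zero $\alpha$ of $u_{i+1}$ the recurrence gives $u_i(\alpha) = \lambda_i\, u_{i+2}(\alpha)$, so $u_i(\alpha)$ and $u_{i+2}(\alpha)$ have opposite signs; this is exactly the Sturm-chain property, and it also shows that consecutive $u_i$ share no zeros (a common zero would propagate up to the nonzero constant $u_{n+1}=1$). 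In the same way I would set $\hat{\tau}_i = w_i/w_{i-1}$, obtaining the backward chain $w_i = (x-r_i)\,w_{i-1} + \lambda_{i-1}\,w_{i-2}$ with $w_0 = 1$ and $w_{-1}=0$.

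The second ingredient is strict monotonicity. Differentiating $\tau_i = (x-r_i) + \lambda_i/\tau_{i+1}$ gives $\tau_i' = 1 - \lambda_i\,\tau_{i+1}'/\tau_{i+1}^2$, so from $\tau_n' = 1 > 0$ and $\lambda_i<0$ one gets $\tau_i' > 0$ on every interval of the domain by downward induction; the analogous computation gives $\hat{\tau}_i' > 0$. Consequently each $\tau_i$ is strictly increasing between consecutive poles (the simple zeros of $u_{i+1}$), crossing each zero of $u_i$ from negative to positive and jumping from $+\infty$ to $-\infty$ across each pole.

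The counting is then a telescoping argument. As $\theta\to-\infty$ every $\tau_i(\theta)\to-\infty$, so $V(-\infty)=0$. I would track the jumps of $V(\theta)=\#\{i:\tau_i(\theta)>0\}$ as $\theta$ increases through a point $\alpha$: by monotonicity the term $\tau_i$ contributes $+1$ to the change if $u_i(\alpha)=0$ and $-1$ if $u_{i+1}(\alpha)=0$ (these cannot occur together, consecutive $u_i$ having no common zero), and $0$ otherwise. Summing over $i$, a zero of $u_k$ with $2\le k\le n$ contributes $+1$ through $\tau_k$ (as numerator) and $-1$ through $\tau_{k-1}$ (as denominator), for a net $0$, while $u_{n+1}=1$ has no zeros. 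Only the zeros of $u_1=p$ survive, each contributing $+1$, and since they are simple this yields $V(\theta)=\#\{\text{zeros of }p\text{ in }(-\infty,\theta)\}$. Equivalently, $V(\theta)$ equals $n$ minus the number of sign changes in the Sturm chain $u_1,\dots,u_{n+1}$, which is precisely Sturm's theorem measured from the left endpoint.

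For $\hat{V}$ the same three steps apply verbatim to the backward chain $w_0,\dots,w_n$, giving $\hat{V}(\theta)=\#\{\text{zeros of }w_n\text{ in }(-\infty,\theta)\}$. The one extra point, which I expect to be the main obstacle, is the identity $w_n=p$: the numerator of a finite continued fraction is unchanged when its partial quotients are read in reverse order (a reversal symmetry of continuants), which I would verify directly from the two recurrences or by induction on $n$. Granting this, both $V(\theta)$ and $\hat{V}(\theta)$ count the zeros of $p$ lying to the left of $\theta$, completing the proof.
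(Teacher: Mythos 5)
Your proof is correct, but it takes a genuinely different route from the paper. The paper never proves Theorem~\ref{sylvester} directly in the classical setting: it establishes the graph-theoretic generalization (Theorem~\ref{newsylvester}) for any graph with a Hamiltonian path, via the path tree $T^i_G$, the sign dynamics $-\to 0\to +\to\infty\to -$ of graph continued fractions, the inequality $m_\theta(G)-m_\theta(G\setminus c)\leq 1$, and the notion of $0$-paths; the classical statement is then the special case where $G$ is a weighted path, so that the terms in $V_c(\theta)$ are exactly the tails $\tau_j$. You instead give a self-contained classical argument: clearing denominators to get the Sturm chain $u_i=(x-r_i)u_{i+1}+\lambda_i u_{i+2}$, proving strict monotonicity of each tail, and telescoping the $\pm 1$ jumps so that only the zeros of $u_1=p$ survive. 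The two proofs are structurally parallel — your monotonicity step is the classical shadow of Corollary~\ref{derivative1}, your telescoping of sign changes mirrors the paper's observation that plus signs can only enter the path through the root of $T^{i_1}_G$, and your continuant reversal symmetry $w_n=u_1=p$ corresponds to the paper's two factorizations of $\mu(G)/\mu(G\setminus c)$ along $c$ and $-c$ (indeed, both polynomials equal the matching polynomial of the same weighted path, which makes the reversal identity you deferred immediate) — but yours stays elementary and avoids all matching-polynomial machinery, while the paper's approach buys the generalization to arbitrary graphs, which is the point of the article. Two small things to tidy up: the simplicity of the zeros of the intermediate $u_{i+1}$ is a consequence of the monotonicity you prove (so state it after, not before, that step), and, as in the paper's own formulation, the identity $V(\theta)=\#\{\text{zeros of }p\text{ in }(-\infty,\theta)\}$ should be read at generic $\theta$, i.e.\ at the finitely many $\theta$ where some $\tau_i(\theta)\in\{0,\infty\}$ the count of strictly positive terms momentarily dips, which is consistent with the caveat preceding the theorem statement.
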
  

Our version of Theorem~\ref{sylvester} reads as follows. Let $c:i_1\to i_n$ be a hamiltonian path in the graph $G$ and $-c:i_n\to i_1$ be its reverse path. For a real number $\theta$ denote by $V_c(\theta)$ the number of positive terms in: 

\[\left\{\dfrac{\mu(G)}{\mu(G\setminus i_1)}(\theta),\,\dfrac{\mu(G\setminus i_1)}{\mu(G\setminus \{i_1,i_2\})}(\theta),\,\dots,\,\dfrac{\mu(G\setminus \{i_1,\dots,i_{n-1}\})}{\mu(G\setminus \{i_1,\dots,i_{n-1},i_n\})}(\theta)\right\}.\]

Similarly, denote by $V_{-c}(\theta)$ the analogous counting for the reverse path $-c$.

\begin{thm}\label{newsylvester} Both $V_c(\theta)$ and $V_{-c}(\theta)$, when defined, are equal to the number of zeros of $\mu(G)$ in the interval $(-\infty,\theta)$.
\end{thm}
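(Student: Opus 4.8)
The plan is to reduce the statement to a telescoping count governed entirely by the Heilmann--Lieb interlacing. Write $G_0=G$ and $G_k=G\setminus i_1,\dots,i_k$ for $1\le k\le n$, so that $G_n$ is the empty graph and $\mu(G_n)=1$. The $k$-th term in the definition of $V_c(\theta)$ is then $f_k:=\mu(G_{k-1})/\mu(G_k)$, and since $c$ is hamiltonian every vertex is eventually deleted. I would prove the identity for every $\theta$ outside the finite set of zeros of the product $\mu(G_0)\cdots\mu(G_{n-1})$; for such $\theta$ each $f_k(\theta)$ is a well-defined nonzero real number, exactly the genericity already noted in the classical statement.

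First I would record the only two structural inputs. Each $\mu(G_k)$ is monic of degree $n-k$ (the top term comes from the empty matching) with all zeros real by Heilmann--Lieb, so for generic $\theta$ one has $\operatorname{sign}\mu(G_k)(\theta)=(-1)^{r_k(\theta)}$, where $r_k(\theta)$ is the number of zeros of $\mu(G_k)$ exceeding $\theta$; writing $\ell_k(\theta)$ for the number below $\theta$, we have $r_k=(n-k)-\ell_k$. Second, applying the interlacing part of Heilmann--Lieb to the graph $G_{k-1}$ and its vertex $i_k$, the zeros of $\mu(G_{k-1})$ and $\mu(G_k)$ interlace; as their degrees differ by one this yields $\ell_{k-1}(\theta)-\ell_k(\theta)\in\{0,1\}$ for every $\theta$.

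The core of the argument is the local sign rule that, for generic $\theta$,
\[
\operatorname{sign} f_k(\theta)=(-1)^{\,r_{k-1}+r_k}=(-1)^{\,1-(\ell_{k-1}-\ell_k)},
\]
using $r_{k-1}-r_k=1-(\ell_{k-1}-\ell_k)$, so that $f_k(\theta)>0$ exactly when $\ell_{k-1}(\theta)-\ell_k(\theta)=1$. Consequently $V_c(\theta)=\#\{k:\ell_{k-1}-\ell_k=1\}$, and since each increment lies in $\{0,1\}$ this count equals the telescoping sum $\sum_{k=1}^n(\ell_{k-1}-\ell_k)=\ell_0(\theta)-\ell_n(\theta)$. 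Because $\mu(G_n)=1$ has no zeros, $\ell_n(\theta)=0$, hence $V_c(\theta)=\ell_0(\theta)$, which is precisely the number of zeros of $\mu(G)$ in $(-\infty,\theta)$. The claim for the reverse path $-c$ follows verbatim: reversing $c$ only reverses the order in which the vertices are deleted, and the proof uses nothing beyond the interlacing of consecutive members and $\mu(\emptyset)=1$, both of which persist.

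The argument is bookkeeping once the interlacing is available, so the only delicate point is justifying $\ell_{k-1}-\ell_k\in\{0,1\}$ in the presence of multiplicities. I would obtain this from the ordered interlacing $\alpha_1\le\beta_1\le\alpha_2\le\dots\le\beta_{n-k}\le\alpha_{n-k+1}$ of the zeros of $\mu(G_{k-1})$ (the $\alpha$'s) and $\mu(G_k)$ (the $\beta$'s): from $\alpha_i\le\beta_i\le\alpha_{i+1}$ one reads off directly that the number of $\alpha$'s below $\theta$ exceeds the number of $\beta$'s below $\theta$ by $0$ or $1$. This is the one place where Heilmann--Lieb is genuinely invoked; everything else is the parity computation and the telescoping identity. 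I would also remark that adjacency of consecutive path vertices is never used, so the same proof holds for any ordering of $V(G)$, the hamiltonian case being the one in which the $f_k$ assemble into a branched continued fraction.
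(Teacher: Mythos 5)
Your proof is correct, but it takes a genuinely different route from the paper's. The paper argues dynamically: it tracks how plus signs propagate down the path tree $T_G^{i_1}$ as $\theta$ increases, shows that a plus sign can enter the path $c$ only through the root and only at those times when $c$ is a $0$-path (i.e.\ when $m_\theta(G)-m_\theta(G\setminus c)=1$), and from this concludes both that $\mu(G)$ has $n$ distinct zeros (Godsil's theorem) and that the number of plus signs along $c$ at time $\theta$ equals the number of zeros of $\mu(G)$ already passed. Your argument is static and purely arithmetic: monicity plus real-rootedness give $\operatorname{sign}\mu(G_k)(\theta)=(-1)^{r_k}$, interlacing gives $\ell_{k-1}-\ell_k\in\{0,1\}$, and the parity identity $r_{k-1}-r_k=1-(\ell_{k-1}-\ell_k)$ converts the count of positive quotients into the telescoping sum $\sum_{k}(\ell_{k-1}-\ell_k)=\ell_0(\theta)$. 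Both of your structural inputs are sound: the ordered interlacing with multiplicities that you flag as the delicate point follows from the paper's own Corollary \ref{derivative1} (the reduced rational function $\mu(G_{k-1})/\mu(G_k)$ has simple, strictly alternating zeros and poles and is increasing on each branch), so there is no gap. The comparison: your proof is shorter, avoids path trees and continued fractions entirely, counts zeros with multiplicity, and --- as you observe --- never uses adjacency of consecutive vertices, so the statement holds for an arbitrary ordering of $V(G)$, Hamiltonicity being relevant only to the continued-fraction interpretation of the quotients. The paper's route costs more but buys more: it establishes along the way that the zeros of $\mu(G)$ are distinct whenever a Hamiltonian path exists, and the $0$-path and sign-propagation machinery it develops is exactly what the paper then reuses to prove the Stability Lemma \ref{newstability} and the refined Gallai--Edmonds structure theorem, none of which falls out of your telescoping argument.
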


In the next section we define multivariate matching polynomials and present their connection to branched continued fractions. In section~\ref{gallaiedmonds} we prove the Stability Theorem~\ref{newstability} and Sylvester Theorem~\ref{newsylvester}. Finally, in section~\ref{applications} we present some other results about the largest zero of weighted matching polynomials.
	

\section{Graph Continued Fractions}\label{graphcfs}

In this section, following Viennot~\cite[p. 149]{viennot1985combinatorial}, we establish the connection between multivariate matching polynomials and branched continued fractions.

Let $G$ be the complete graph with vertex set $[n]$. Define variable weights $x_i$ and non-positive weights $\lambda_{jk}$ for each of the vertices and edges, respectively. Two vertices $i$ and $j$ are neighbors if $\lambda_{ij}$ is non-zero.

Then the {\it multivariate matching polynomial} of $G$ is

\[\mu(G):=\ds\sum_{M\in\mathcal{M}_G}\ds\prod_{i\not\in M}x_i\ds\prod_{jk\in M}\lambda_{jk}.\]  

This is a real multivariate polynomial in the $n$ vertex variables $x_i$. It is also convenient to define $\mu(\emptyset)=1$. 

The next lemma, which appears in Godsil's book~\cite[p. 2, Thm. 1.1]{godsil2013algebraic}, has some recurrences satisfied by the multivariate matching polynomial.

\begin{lem}\label{recurrencesMatching} Let $G$ and $H$ be weighted graphs and $i$ and $j$ be vertices in $G$. Then,
\begin{enumerate}[label=(\alph*)]
	\item $\mu(G\sqcup H)=\mu(G)\cdot \mu(H)$;
	\item $\mu(G)=x_i\mu(G\setminus i)+\ds\sum_{k\neq i}\lambda_{ik}\mu(G\setminus \{i,k\})$;
	\item $\mu(G)=\lambda_{ij}\mu(G\setminus \{i,j\})+\mu(G\setminus ij)$;
	\item $\partial_i \mu(G)=\mu(G\setminus i)$.
\end{enumerate} 
\end{lem}
\begin{proof} $(a)$ Since the matchings of $G\sqcup H$ are a pair of matchings in $G$ and $H$, the result follows. 
	
$(b)$ The matchings of $G$ that do not cover the vertex $i$ contribute $x_i\mu(G\setminus i)$ to $\mu(G)$. The matchings of $G$ which cover the vertex $i$ must use one of the incident edges and therefore contribute $\ds\sum_{k\neq i}\lambda_{ik}\mu(G\setminus \{i,k\})$ in total to $\mu(G)$. 

$(c)$ If we separate the matchings of $G$ into those that contain the edge $ij$, or not, the conclusion follows. 

$(d)$ This item is an immediate consequence of item $(b)$.
\end{proof}

For every rooted weighted tree, a branched continued fraction can be associated in a natural way, as exemplified in Figure~\ref{branchedcf}. This associated branched continued fraction can be defined recursively as described below.

Denote by $\alpha_i(T)$ the branched continued fraction of the weighted rooted tree $T$ with root $i$. If the vertex $i$ is isolated in $T$, then $\alpha_i(T)$ is, by definition, equal to $x_i$. On the other hand, if $i$ is not isolated in $T$, then we have the recurrence $\alpha_i(T)=x_i+\ds\sum_{j\sim i}\dfrac{\lambda_{ij}}{ \alpha_j (T\setminus i)}$, where the sum is over all neighbors of $i$.

For a rooted weighted tree we call its associated branched continued fraction its \textit{tree continued fraction}. 

\begin{figure}[h]
	\includegraphics[width=\linewidth]{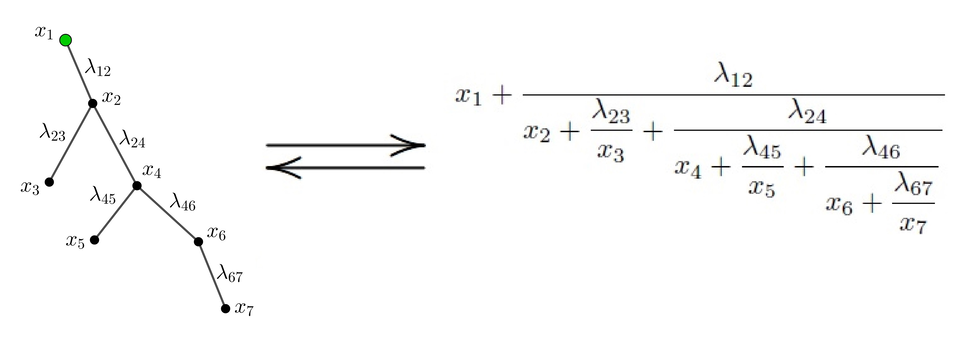}
	\caption{A rooted tree and its associated tree continued fraction.}
	\label{branchedcf}
\end{figure}

In this case, the following result holds. 

\begin{thm}\label{treecontinuedfraction} For a weighted tree $T$ with root $i$, its associated tree continued fraction $\alpha_i(T)$ is equal to $\dfrac{\mu(T)}{\mu(T\setminus i)}$.
\end{thm}
\begin{proof} For every graph $G$ and vertex $i$, by item $(b)$ of Lemma~\ref{recurrencesMatching}, it follows that,
	
\[\mu(G)=\ds\sum_{j\neq i}\lambda_{ij}\mu(G\setminus \{i,j\})+x_i\mu(G\setminus i)\iff \dfrac{\mu(G)}{\mu(G\setminus i)}=x_i+\ds\sum_{j\neq i}\dfrac{\lambda_{ij}}{\dfrac{\mu(G\setminus i)}{\mu(G\setminus \{i,j\})}}.\]
	
To finish the proof, we replace the graph $G$ with the tree $T$ in this last equation and observe that we get the same recurrence satisfied by $\alpha_i(T)$.
\end{proof}

Looking at the proof of Theorem~\ref{treecontinuedfraction} it can be seen that, in principle, it should work more generally for every rooted graph, the only missing ingredient being the analogue of a tree continued fraction. Iterating the recurrence for $\dfrac{\mu(G)}{\mu(G\setminus i)}$ for a rooted graph $G$, what one obtains at the end is a tree continued fraction for the {\it rooted path tree} of the rooted graph $G$.

For a rooted graph $G$ with root $i$ its {\it rooted path tree} $T^i_G$ is the rooted tree with vertices labeled by paths in $G$ starting at $i$, where two vertices are connected if one path is a maximal sub-path of the other. The root of $T^i_G$ is the trivial path $i$, and the weights of $T^i_G$ are obtained from the weights of $G$ as follows. If $c:i\to j$ is a maximal sub-path of $\hat{c}:i\to k$, then the weight of the edge connecting the vertices corresponding to the paths $c$ and $\hat{c}$ is $\lambda_{c\hat{c}} := \lambda_{jk}$. An example of a rooted graph with its corresponding rooted path tree is presented in Figure~\ref{godsil}.

This discussion motivates the following definition. 

\begin{dfn}(Graph continued fraction) The graph continued fraction of a weighted graph $G$ with root $i$ is defined as $\alpha_i(G):=\dfrac{\mu(G)}{\mu(G\setminus i)}$. 
\end{dfn}

Note that this is consistent with the definition of tree continued fraction. The observation above leads to the following lemma, originally due to Godsil~\cite[p. 287, Thm. 2.5]{godsil_matchings}. 

\begin{lem}(Godsil~\cite{godsil_matchings})\label{godsiltree} Given a rooted graph $G$ with root $i$ it holds:
	
\[\dfrac{\mu(G)}{\mu(G\setminus i)}=\alpha_i(G)=\alpha_i(T^i_G)=\dfrac{\mu(T^i_G)}{\mu(T^i_G\setminus i)}.\]
\end{lem}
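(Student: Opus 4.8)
The plan is to prove only the single nontrivial equality $\alpha_i(G)=\alpha_i(T^i_G)$, since the two outer equalities are immediate from the definition $\alpha_i(G)=\mu(G)/\mu(G\setminus i)$. I read this as an identity of rational functions in the vertex variables, and I would argue by induction on the number of vertices of $G$. For the base case, a single vertex $i$ gives $\mu(G)=x_i$ and $\mu(G\setminus i)=\mu(\emptyset)=1$, while $T^i_G$ is the one-vertex tree, so both sides equal $x_i$.

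For the inductive step the tool is the recurrence displayed above, which I apply at the root to both $G$ and $T^i_G$:
\[\alpha_i(G)=x_i+\ds\sum_{j\neq i}\dfrac{\lambda_{ij}}{\alpha_j(G\setminus i)},\qquad \alpha_i(T^i_G)=x_i+\ds\sum_{c}\dfrac{w_c}{\alpha_c(T^i_G\setminus i)},\]
where $c$ ranges over the children of the root in $T^i_G$ and $w_c$ is the weight of the edge joining the root to $c$. In the first sum the terms with $\lambda_{ij}=0$ drop out, so it is effectively a sum over the neighbors of $i$. The goal is to match the two expansions term by term.

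The key combinatorial observation, and the step I expect to be the main obstacle, is to identify the structure of $T^i_G$ just below its root. By the definition of the rooted path tree, the children of the root (the trivial path $i$) are exactly the paths $i\to j$ for neighbors $j$ of $i$; the vertex weight of the child $i\to j$ is $x_j$ and the weight of the edge joining it to the root is $\lambda_{ij}$, so the two expansions have the same outer shape. Moreover, the subtree of $T^i_G$ hanging below the child $i\to j$ is canonically isomorphic, as a weighted rooted tree, to $T^j_{G\setminus i}$: paths in $G$ of the form $i\to j\to\cdots$ correspond bijectively to paths in $G\setminus i$ starting at $j$, precisely because a path cannot revisit the vertex $i$. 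I would verify that this bijection preserves the maximal-subpath adjacency and the inherited vertex and edge weights, so the isomorphism is genuine.

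With this identification the two expansions line up. Since $T^i_G\setminus i$ is a disjoint union of the subtrees hanging below the children, and the matching polynomial is multiplicative over connected components, $\alpha_c$ evaluated in $T^i_G\setminus i$ depends only on the component containing $c$; hence $\alpha_{(i\to j)}(T^i_G\setminus i)=\alpha_j(T^j_{G\setminus i})$. Because $G\setminus i$ has fewer vertices, the induction hypothesis applied to $G\setminus i$ rooted at $j$ gives $\alpha_j(T^j_{G\setminus i})=\alpha_j(G\setminus i)$. Substituting back shows that the term of $\alpha_i(T^i_G)$ indexed by $i\to j$ equals the term of $\alpha_i(G)$ indexed by $j$, so the two continued fractions agree and the induction closes.
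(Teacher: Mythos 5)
Your proof is correct and is essentially the paper's own argument: the paper also rests everything on the root recurrence $\alpha_i(G)=x_i+\sum_{j\neq i}\lambda_{ij}/\alpha_j(G\setminus i)$, proving the lemma by iterating it and observing that the recursion structure is exactly the rooted path tree. Your induction on the number of vertices, together with the identification of the subtree of $T^i_G$ below the child $i\to j$ with $T^j_{G\setminus i}$, is precisely a rigorous formalization of that iteration, so nothing is missing.
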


As a consequence of this lemma, every graph continued fraction can be transformed into a tree continued fraction. This also allows the definition of graph continued fractions for infinite graphs. An illustration of Lemma~\ref{godsiltree} is presented in Figure~\ref{godsil}, where, for simplicity, the rooted graphs represent their graph continued fractions.

\begin{figure}[H]
	\includegraphics[width=\linewidth]{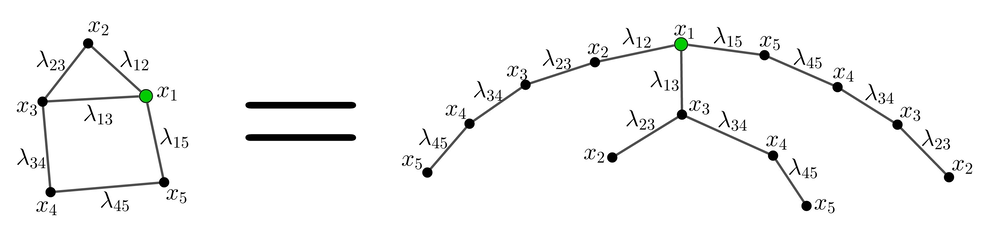}
	\caption{An illustration of the equality $\alpha_i(G)=\alpha_i(T^i_G)$.}
	\label{godsil}
\end{figure}

Using Lemma~\ref{godsiltree} we can give a proof of the classic result of Heilmann-Lieb~\cite[p. 201-203, Thms. 4.3 and 4.6]{heilmann-lieb} about the position of the zeros of multivariate matching polynomials.

\begin{thm}(Heilmann-Lieb~\cite{heilmann-lieb})\label{Heilmann-Lieb} The multivariate matching polynomial of $G$ is different from zero if one of the following conditions is satisfied: 
	\begin{itemize}
		\item $Im(x_i)>0$ for every $i$;
		\item $|x_i|>2\sqrt{B_G}$ for every $i$, where $B_G$ is equal to $\ds\max_j\,\max_{\mathclap{\substack{A\subseteq [n]\setminus j\\ |A|=n-2}}}\quad\sum_{k\in A}-\lambda_{jk}$ if $n\geq 3$, and equal to $-\lambda_{12}/4$ or $0$ if $n$ is two or one, respectively.
	\end{itemize}	
\end{thm}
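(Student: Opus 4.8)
The plan is to deduce both statements from Lemma~\ref{godsiltree}, which lets us replace the graph continued fraction $\alpha_i(G)=\mu(G)/\mu(G\setminus i)$ by the tree continued fraction $\alpha_i(T^i_G)$, and then to argue by induction from the leaves of the relevant path tree up to its root. Throughout I use the recurrence in the form $\alpha_v=x_v+\sum_{c}\lambda_{vc}/\alpha_c$, where $c$ runs over the children of the node $v$ and $\alpha_c$ is the continued fraction of the subtree hanging below $c$; the weights of the path tree are inherited from $G$, so the vertex weights are among the $x_k$ and the edge weights among the non-positive $\lambda_{jk}$. At a leaf one has $\alpha_v=x_v$, and the inductive step uses only the sign condition $\lambda_{vc}\le 0$ together with the hypothesis on the $x_v$. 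An outer induction on $|V(G)|$, whose inductive hypothesis supplies $\mu(G\setminus i)\ne 0$, then upgrades the nonvanishing of $\alpha_i(G)$ to that of $\mu(G)$.

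For the first bullet I would show by induction that every partial continued fraction $\alpha_v$ of $T^i_G$ has strictly positive imaginary part. This is clear at a leaf since $\operatorname{Im}(x_v)>0$. For the step, $\operatorname{Im}(\alpha_c)>0$ forces $\operatorname{Im}(1/\alpha_c)<0$, and multiplying by $\lambda_{vc}\le 0$ gives $\operatorname{Im}(\lambda_{vc}/\alpha_c)\ge 0$; adding $x_v$ preserves a strictly positive imaginary part. In particular no denominator in the continued fraction vanishes, the evaluation is legitimate, and $\alpha_i(G)$ has positive imaginary part, hence is nonzero.

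For the second bullet the invariant to propagate is $|\alpha_v|>\sqrt{B_G}$. I would organize the induction by rooting at $i$, expanding $\alpha_i(G)=x_i+\sum_{j\sim i}\lambda_{ij}/\alpha_j(G\setminus i)$, and applying the tree induction to each $\alpha_j(G\setminus i)=\alpha_j(T^j_{G\setminus i})$. The key combinatorial observation is that in the path tree $T^j_{G\setminus i}$ every node has its children indexed by a set of neighbors omitting at least two vertices of $[n]$: for an internal node the endpoint of its path and that path's predecessor, and for the root $j$ both $j$ itself and the deleted vertex $i$. Hence the edge-weight sum over the children of any node is at most $B_G$, and feeding $|\alpha_c|>\sqrt{B_G}$ into the recurrence yields $|\alpha_v|\ge |x_v|-\tfrac{1}{\sqrt{B_G}}\sum_c(-\lambda_{vc})>2\sqrt{B_G}-\sqrt{B_G}=\sqrt{B_G}$. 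Thus the invariant propagates and each $|\alpha_j(G\setminus i)|>\sqrt{B_G}$.

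The remaining step, bounding $\alpha_i(G)$ at the root $i$ of $G$, is where I expect the real difficulty: the root may have up to $n-1$ neighbors, so its edge-weight sum $S_i=\sum_{j\sim i}(-\lambda_{ij})$ need not be at most $B_G$. The point to verify is the inequality $S_i\le 2B_G$ for $n\ge 3$, which holds because $B_G\ge S_i-\min_{k\ne i}(-\lambda_{ik})$ while the minimum of the $n-1$ nonnegative weights at $i$ is at most $S_i/(n-1)\le S_i/2$. Granting this, $|\alpha_i(G)|\ge |x_i|-S_i/\sqrt{B_G}\ge |x_i|-2\sqrt{B_G}>0$, so $\mu(G)\ne 0$. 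Finally I would treat the degenerate possibilities directly: if $B_G=0$ then $G$ is edgeless and $\mu(G)=\prod_i x_i\ne 0$; the case $n=1$ is immediate; and for $n=2$ the hypothesis $|x_i|>2\sqrt{B_G}=\sqrt{-\lambda_{12}}$ gives $|x_1x_2|>-\lambda_{12}$, so $\mu(G)=x_1x_2+\lambda_{12}\ne 0$.
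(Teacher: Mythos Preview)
Your argument is correct and follows essentially the same route as the paper: induct on $|V(G)|$, use Lemma~\ref{godsiltree} to pass to the path tree, and propagate an invariant through the tree continued fraction, noting that every non-root node of $T^i_G$ has at most $n-2$ children. The paper packages both bullets into a single region $R$ and asserts that the maps $f_{j,A}$ with $|A|\le n-2$ send $R^n$ into $R$, whereas you separate the two bullets and are more explicit about the second one. In particular, you correctly work with the weaker invariant $|\alpha_v|>\sqrt{B_G}$ (which is what actually propagates under $f_{j,A}$ with $|A|\le n-2$; the bound $|\alpha_v|>2\sqrt{B_G}$ does \emph{not} propagate), and you supply the inequality $S_i\le 2B_G$ needed at the root, which the paper leaves implicit by deferring to \cite{heilmann-lieb}. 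So your write-up is a faithful and slightly more careful rendering of the same proof.
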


\begin{proof} The approach is the same as in~\cite[p. 201-203, Thms. 4.3 and 4.6]{heilmann-lieb}. Consider a graph $G$ and let $R$ be one of the regions $[Im(x)>0]$ or $[|x|>2\sqrt{B_G}]$ in the complex plane. Our aim is to prove that $\mu(G)$ is different from zero in $R^n$. Note that for a graph with only one vertex this result is trivial. Assume, by induction hypothesis, that the statement is true for any graph with less vertices than $G$.
	
Choose any vertex $i$ as a root of $G$ and consider the graph continued fraction $\alpha_i(G)$. By the induction hypothesis, and $B_G\geq B_{G\setminus i}$, it is sufficient to prove that the graph continued fraction $\alpha_i(G)=\dfrac{\mu(G)}{\mu(G\setminus i)}$ is different from zero in $R^n$.
	
By Lemma~\ref{godsiltree}, $\alpha_i(G)$ is equal to the tree continued fraction $\alpha_i(T^i_G)$. Following the structure of the rooted tree $T^i_G$, one can write $\alpha_i(G)=\alpha_i(T^i_G)$ as a composition of some functions
	
\[f_{j,A}(x_1,\dots,x_n):=x_j+\ds\sum_{k\in A}\frac{\lambda_{jk}}{x_k},\]
	
\noindent with $j$ in $[n]$ and $A$ a subset of $[n]\setminus j$. Each function corresponding to a vertex in the rooted tree $T^i_G$. Observe that except for the last function in this composition, which corresponds to the root of $T^i_G$, all the other functions $f_{j,A}$ satisfy $|A|\leq n-2$. This can be seen by carefully examining the examples of Figures~\ref{branchedcf} and~\ref{godsil}.
	
Finally, observe that the image of $R^n$ by every function $f_{j,A}$ with $|A|\leq n-2$ is again contained in $R$, and that every function $f_{j,A}$ with $|A|=n-1$ is different from zero in $R^n$. Putting this all together it follows that $\alpha_i(G)=\alpha_i(T^i_G)$ is different from zero in $R^n$, which finishes the proof.
\end{proof}

With the concept of graph continued fraction already established, a natural follow up question is the effect of graph operations on a graph continued fraction. We consider one of the simplest graph operations there is, that of removing a vertex from the graph. Observe that,

\[\alpha_i(G)-\alpha_i(G\setminus j)=\dfrac{\mu(G)}{\mu(G\setminus i)}-\dfrac{\mu(G\setminus j)}{\mu(G\setminus \{j,i\})}=\]
\[=\dfrac{\mu(G\setminus \{i,j\})\mu(G)-\mu(G\setminus i)\mu(G\setminus j)}{\mu(G\setminus \{i,j\})\mu(G\setminus i)}.\]

Thus, we are led to consider the expression $\mu(G\setminus i)\mu(G\setminus j)-\mu(G\setminus \{i,j\})\mu(G)$. The next lemma, which originally appears in the work~\cite[p. 213, Thm. 6.3]{heilmann-lieb}, simplifies this last expression and is one of the main tools in the study of matching polynomials.

\begin{lem}\label{Christoffel-Darboux}(Christoffel-Darboux~\cite{heilmann-lieb}) Given a graph $G$ and two distinct vertices $i$ and $j$, it holds that
	
\[\mu(G\setminus i)\mu(G\setminus j)-\mu(G\setminus \{i,j\})\mu(G)=\ds\sum_{c\in[i\to j]}\lambda_c\cdot \mu(G\setminus c)^2,\]
	
\noindent where $[i\to  j]$ is the set of paths from $i$ to $j$ and $\lambda_c$ is the product of $-\lambda_e$ over the edges $e$ of the path $c$.
\end{lem}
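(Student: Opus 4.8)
The plan is to prove the identity by induction on the number of vertices of $G$. Write $D(G;i,j):=\mu(G\setminus i)\mu(G\setminus j)-\mu(G\setminus i,j)\mu(G)$ for the left-hand side; the goal is then to show $D(G;i,j)=\sum_{c\in[i\to j]}\lambda_c\,\mu(G\setminus c)^2$. Both sides are visibly symmetric in $i$ and $j$ (a path from $i$ to $j$ is the same as a path from $j$ to $i$), so I am free to single out the vertex $i$. The idea is to expand $D(G;i,j)$ by applying the vertex-deletion recurrence $\mu(H)=x_i\mu(H\setminus i)+\sum_{k\neq i}\lambda_{ik}\mu(H\setminus i,k)$ from the previous section to both $\mu(G)$ and $\mu(G\setminus j)$ at the common vertex $i$, and then to recognize the outcome as a recurrence relating $D(G;i,j)$ to smaller instances $D(G\setminus i;j,k)$.

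Carrying this out, after substituting the two expansions the two terms carrying the factor $x_i$ cancel exactly. Isolating the index $k=j$ in the remaining sums produces the term $-\lambda_{ij}\,\mu(G\setminus i,j)^2$, while for each $k\neq i,j$ the coefficient of $\lambda_{ik}$ is $\mu(G\setminus i)\mu(G\setminus i,j,k)-\mu(G\setminus i,j)\mu(G\setminus i,k)$. Setting $H=G\setminus i$, this last expression is precisely $-D(H;j,k)$. Hence I obtain the recurrence
\[D(G;i,j)=-\lambda_{ij}\,\mu(G\setminus i,j)^2-\sum_{k\neq i,j}\lambda_{ik}\,D(G\setminus i;j,k).\]
Since $G\setminus i$ has one fewer vertex, the inductive hypothesis rewrites each $D(G\setminus i;j,k)$ as a sum over paths from $j$ to $k$ inside $G\setminus i$.

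It remains to match this with the target sum over paths from $i$ to $j$ in $G$, which I do by classifying such a path $c$ according to its first edge $ik$ at the endpoint $i$. The single-edge path $c=ij$ contributes $\lambda_c\,\mu(G\setminus c)^2=-\lambda_{ij}\,\mu(G\setminus i,j)^2$, matching the first term of the recurrence. Any longer path splits uniquely as the edge $ik$ (with $k\neq i,j$) followed by a path $c'$ from $k$ to $j$ in $G\setminus i$; here $\lambda_c=(-\lambda_{ik})\lambda_{c'}$ and $G\setminus c=(G\setminus i)\setminus c'$, so these paths reproduce exactly the terms $-\lambda_{ik}\,D(G\setminus i;j,k)$ once the inductive hypothesis is applied. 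The base case is $G=\{i,j\}$, where both sides equal $-\lambda_{ij}$ (or $0$ if there is no edge), a direct computation. I expect the main obstacle to be purely bookkeeping: keeping the sign convention $\lambda_c=\prod_{e\in c}(-\lambda_e)$ consistent through the split of a path into its first edge and its remainder, and checking that reversing orientation (so that paths from $k$ to $j$ in $G\setminus i$ are identified with paths from $j$ to $k$) aligns the recursive terms with no double counting. An alternative, more combinatorial route would expand both products on the left as weighted superpositions of two matchings and build a sign-reversing involution whose fixed points are exactly the $i$-to-$j$ paths, but the inductive argument above sidesteps the delicate analysis of that involution's orbits.
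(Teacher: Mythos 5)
Your proof is correct and takes essentially the same approach as the paper, which simply states ``proof by induction on the number of vertices, as in Heilmann--Lieb'' without giving details. Your expansion at the vertex $i$, the cancellation of the $x_i$ terms, the recurrence $D(G;i,j)=-\lambda_{ij}\,\mu(G\setminus i,j)^2-\sum_{k\neq i,j}\lambda_{ik}\,D(G\setminus i;j,k)$, and the matching classification of paths by their first edge are exactly the bookkeeping that induction requires, and all signs check out.
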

\begin{proof} Proof by induction on the number of vertices of the graph, as in the work of Heilmann and Lieb~\cite[p. 213, Thm. 6.3]{heilmann-lieb}.
\end{proof} 

Observe that, since the edge weights are non-positive, $-\lambda_c$ is non-positive for every path $c$. Using Lemma~\ref{Christoffel-Darboux} we obtain a simplified formula for the effect of removing a vertex in a graph continued fraction: 

\[\alpha_i(G)-\alpha_i(G\setminus j)=\ds\dfrac{-\ds\sum_{c\in[i\to j]}\lambda_c\cdot \mu(G\setminus c)^2}{\mu(G\setminus \{i,j\})\mu(G\setminus i)}.\]

This generalizes the difference formula for the classical continued fractions. It turns out that it is useful to rewrite this last equality as follows.

\begin{lem}\label{Contraction}(Contraction) Given a graph $G$ and two distinct vertices $i$ and $j$, it holds that
	
\[\alpha_i(G)=\alpha_i(G\setminus j)+\dfrac{\lambda_{i\sim j}}{\alpha_j(G\setminus i)}, \text{ where } \lambda_{i\sim j}=\lambda_{j\sim i}:=\dfrac{-\ds\sum_{c\in[i\to j]}\lambda_c\cdot \mu(G\setminus c)^2}{\mu(G\setminus \{i,j\})^2}.\]
\end{lem}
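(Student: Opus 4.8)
The plan is to derive the contraction identity directly from the
Christoffel--Darboux simplified difference formula established just before the
statement. Recall that we already have
\[
\alpha_i(G)-\alpha_i(G\setminus j)
=\dfrac{-\ds\sum_{c\in[i\to j]}\lambda_c\cdot \mu(G\setminus c)^2}
{\mu(G\setminus i,j)\,\mu(G\setminus j)}.
\]
So proving the lemma amounts to showing that the right-hand side equals
$\lambda_{i\sim j}/\alpha_j(G\setminus i)$, where $\lambda_{i\sim j}$ is the
quantity defined in the statement. The whole computation is therefore a matter
of rewriting a single fraction, and I expect no genuine obstacle beyond
bookkeeping.

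First I would unfold the definition of the graph continued fraction applied to
the graph $G\setminus i$ with root $j$, namely
$\alpha_j(G\setminus i)=\dfrac{\mu(G\setminus i)}{\mu(G\setminus i,j)}$, so that
\[
\dfrac{1}{\alpha_j(G\setminus i)}
=\dfrac{\mu(G\setminus i,j)}{\mu(G\setminus i)}.
\]
Next I would substitute the definition of $\lambda_{i\sim j}$ from the
statement and multiply, obtaining
\[
\dfrac{\lambda_{i\sim j}}{\alpha_j(G\setminus i)}
=\dfrac{-\ds\sum_{c\in[i\to j]}\lambda_c\cdot \mu(G\setminus c)^2}
{\mu(G\setminus i,j)^2}
\cdot\dfrac{\mu(G\setminus i,j)}{\mu(G\setminus i)}
=\dfrac{-\ds\sum_{c\in[i\to j]}\lambda_c\cdot \mu(G\setminus c)^2}
{\mu(G\setminus i,j)\,\mu(G\setminus i)}.
\]

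It then remains to reconcile this with the difference formula above: the two
expressions have the same numerator, but the denominators appear to differ,
$\mu(G\setminus i,j)\,\mu(G\setminus i)$ versus
$\mu(G\setminus i,j)\,\mu(G\setminus j)$. Here I would note that the only step
where care is needed is confirming that the difference formula is being applied
symmetrically; in fact the Christoffel--Darboux expression
$\mu(G\setminus i)\mu(G\setminus j)-\mu(G\setminus i,j)\mu(G)$ is symmetric in
$i$ and $j$, so that the simplified difference formula holds equally with the
roles of the two removed vertices interchanged. The cleaner route, which I
would adopt to avoid any mismatch, is to start from the symmetric
Christoffel--Darboux identity of Lemma \ref{Christoffel-Darboux} and divide
through by $\mu(G\setminus i,j)\,\mu(G\setminus i)$ directly, rather than routing
through the already-divided difference formula; this immediately produces
$\alpha_i(G)-\alpha_i(G\setminus j)$ on the left after recognizing
$\mu(G)/\mu(G\setminus i)-\mu(G\setminus j)/\mu(G\setminus i,j)$, and the
claimed right-hand side on the right. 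The main thing to watch is thus
consistency of the denominators under the $i\leftrightarrow j$ symmetry, and
once that is pinned down the identity follows by pure algebra.
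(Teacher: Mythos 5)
Your proposal is correct and follows essentially the same route the paper intends: the paper gives no separate proof of Lemma \ref{Contraction}, presenting it simply as a rewriting of the difference formula obtained from the Christoffel--Darboux Lemma \ref{Christoffel-Darboux}, which is exactly your computation. One clarification, though: the denominator mismatch you noticed is not a matter of applying the formula ``symmetrically'' --- it is a typo in the paper's intermediate display. Computing the difference over a common denominator gives
\[
\alpha_i(G)-\alpha_i(G\setminus j)
=\dfrac{\mu(G\setminus i,j)\mu(G)-\mu(G\setminus i)\mu(G\setminus j)}{\mu(G\setminus i,j)\,\mu(G\setminus i)},
\]
so after applying Lemma \ref{Christoffel-Darboux} the denominator is $\mu(G\setminus i,j)\,\mu(G\setminus i)$, not $\mu(G\setminus i,j)\,\mu(G\setminus j)$. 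Your symmetry remark would not by itself repair the stated formula: swapping $i$ and $j$ in it produces a statement about $\alpha_j(G)-\alpha_j(G\setminus i)$, not about $\alpha_i$. Fortunately the route you actually adopt --- dividing the Christoffel--Darboux identity by $\mu(G\setminus i,j)\,\mu(G\setminus i)$ directly --- sidesteps this entirely, and it matches your computation
$\lambda_{i\sim j}/\alpha_j(G\setminus i)
=\bigl(-\sum_{c\in[i\to j]}\lambda_c\,\mu(G\setminus c)^2\bigr)\big/\bigl(\mu(G\setminus i,j)\,\mu(G\setminus i)\bigr)$,
which proves the identity. Finally, the claim $\lambda_{i\sim j}=\lambda_{j\sim i}$, which you did not address, is immediate: reversing paths identifies $[i\to j]$ with $[j\to i]$ while preserving both $\lambda_c$ and $\mu(G\setminus c)$.
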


Since all the edges have non-positive weights, $-\lambda_{i\sim j}$ is a sum of squares. Also, $\lambda_{i\sim j}$ does not depend on the vertex weights of $i$ and $j$.

The Lemma~\ref{Contraction} is well known in the classical theory of continued fractions, as can be seen in the books by Perron~\cite[p. 12, Satz 1.6]{perron1957lehreII} and Jones and Thron \cite[p. 38]{jones1984continued}. Using the theory of Heaps of Pieces~\cite[p. 149-150]{viennot1985combinatorial} it is possible to give a combinatorial interpretation to Lemma~\ref{Contraction}.

As will be seen in the next section, the Lemma~\ref{Contraction} is one of our main tools. It allows us to partially contract parts of a given graph continued fraction, thus simplifying expressions. This is particularly useful when comparing two graph continued fractions for the same graph but with different roots.


\section{Gallai-Edmonds Structure Theorem}\label{gallaiedmonds} 

In this section consider graphs with vertex set $[n]$ where the vertex weights are $x-r_i$, with $r_i$ a real number, and the edge weights are non-positive $\lambda_{ij}$. In this case the multivariate matching polynomial is the weighted matching polynomial as defined in Section~\ref{intro}. The weighted matching polynomial is a real univariate polynomial of degree $n$. The Theorem~\ref{Heilmann-Lieb} has the following consequence in this case.

\begin{cor}(Heilmann-Lieb~\cite{heilmann-lieb})\label{Heilmann-Lieb-real} All the zeros of the matching polynomial of $G$ are real and contained in the interval $\left[\ds\min_jr_j-2\sqrt{B_G},\ds\max_jr_j+2\sqrt{B_G}\right]$, where $B_G$ is equal to $\ds\max_j\,\max_{\mathclap{\substack{A\subseteq [n]\setminus j\\ |A|=n-2}}}\quad\sum_{k\in A}-\lambda_{jk}$ if $n\geq 3$, and equal to $-\lambda_{12}/4$ or $0$ if $n$ is two or one, respectively.	
\end{cor}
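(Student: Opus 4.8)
The plan is to derive this corollary as a direct specialization of the multivariate Heilmann–Lieb Theorem \ref{Heilmann-Lieb}, by substituting the univariate vertex weights $x_i = x - r_i$ (with $x$ a single complex variable and each $r_i$ real) and transporting the two regions appearing there into statements about the single variable $x$. Since the weighted matching polynomial is obtained from the multivariate one precisely by this substitution, any complex number $\theta$ that is a zero of $\mu(G)$ corresponds to a point $(x_1,\dots,x_n) = (\theta - r_1,\dots,\theta - r_n)$ at which the multivariate polynomial vanishes, and it suffices to check that such a point cannot lie in either nonvanishing region of Theorem \ref{Heilmann-Lieb}.

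For reality of the zeros, I would argue by contradiction. Suppose $\theta$ is a zero with $\operatorname{Im}(\theta) > 0$. Because every $r_i$ is real, each $x_i = \theta - r_i$ satisfies $\operatorname{Im}(x_i) = \operatorname{Im}(\theta) > 0$, so the point lies in the region $[\operatorname{Im}(x_i) > 0$ for every $i]$, where Theorem \ref{Heilmann-Lieb} guarantees $\mu(G) \neq 0$, a contradiction. The case $\operatorname{Im}(\theta) < 0$ follows either by the same argument applied to the lower half-plane or, more cleanly, by noting that $\mu(G)$ has real coefficients so its nonreal zeros occur in conjugate pairs; hence every zero is real.

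For the interval bound, I would again proceed by contradiction using the second nonvanishing region. If $\theta$ is a real zero with $\theta > \max_j r_j + 2\sqrt{B_G}$, then for each $i$ one has $x_i = \theta - r_i \geq \theta - \max_j r_j > 2\sqrt{B_G} > 0$, so $|x_i| > 2\sqrt{B_G}$ for every $i$; symmetrically, if $\theta < \min_j r_j - 2\sqrt{B_G}$, then $x_i = \theta - r_i \leq \theta - \min_j r_j < -2\sqrt{B_G}$, again giving $|x_i| > 2\sqrt{B_G}$ for every $i$. In both cases the point lies in the region $[|x_i| > 2\sqrt{B_G}$ for every $i]$, forcing $\mu(G) \neq 0$ and contradicting the assumption that $\theta$ is a zero.

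The argument is essentially routine once Theorem \ref{Heilmann-Lieb} is in hand; the only point requiring care is the bookkeeping of the shift, namely verifying that the uniform real translation $x_i = x - r_i$ sends the open upper half-plane in $x$ into the upper half-plane in each $x_i$, and that a real $\theta$ beyond $\max_j r_j + 2\sqrt{B_G}$ (respectively below $\min_j r_j - 2\sqrt{B_G}$) forces every coordinate past the threshold $2\sqrt{B_G}$ in modulus. This is where the choice of the symmetric interval endpoints $\min_j r_j - 2\sqrt{B_G}$ and $\max_j r_j + 2\sqrt{B_G}$ is exactly what makes the two regions of Theorem \ref{Heilmann-Lieb} cover the complement of the claimed interval, and it is the only step I expect to demand explicit justification rather than a one-line citation.
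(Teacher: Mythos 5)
Your proposal is correct and follows essentially the same route as the paper: both deduce reality from the upper-half-plane nonvanishing region of Theorem \ref{Heilmann-Lieb} (using real coefficients and conjugation) and the interval bound from the region $|x_i|>2\sqrt{B_G}$, with your version merely spelling out the substitution $x_i=x-r_i$ and the endpoint bookkeeping that the paper leaves implicit.
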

\begin{proof} As $\mu(G)$ has real coefficients, if it has a non-real zero, then by conjugation it has a zero in the upper half-plane. This is prohibited by Theorem~\ref{Heilmann-Lieb}, so $\mu(G)$ has only real zeros. The bound on the zeros follows immediately from the second item of Theorem~\ref{Heilmann-Lieb}.
\end{proof}

The particular case of equal edge weights in Corollary~\ref{Heilmann-Lieb-real} was used by Marcus, Spielman and Srivastava~\cite[p. 316, Thm. 5.5]{interlacing_familiesI} in their construction of bipartite Ramanujan graphs of all degrees.

The Corollary~\ref{Heilmann-Lieb-real} implies that $\alpha_i(G)(x)$ is a real rational function with all its zeros and poles in the real line. In order to better understand the position of the zeros and poles we look at the derivative of matching polynomials and graph continued fractions.

\begin{lem}\label{derivative0} Let $G$ be a rooted graph with root $i$. Then:
\begin{itemize}
	\item $\mu(G)'(x)=\ds\sum_{j\in [n]}\mu(G\setminus j)(x)$;
	\item $\alpha_i(G)'(x)=1+\ds\sum_{i\neq j\in [n]}\ds\sum_{c\in[i\to j]}\lambda_c\cdot \left(\dfrac{\mu(G\setminus c)}{\mu(G\setminus i)}(x)\right)^2=1-\ds\sum_{i\neq j\in [n]}\dfrac{\lambda_{i\sim j}(x)}{(\alpha_j(G\setminus i)(x))^2}$.
\end{itemize}
\end{lem}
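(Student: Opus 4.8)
The plan is to prove the two identities in sequence, deriving the second from the first together with the Christoffel-Darboux Lemma \ref{Christoffel-Darboux} and the Contraction Lemma \ref{Contraction}.

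For the first identity I would differentiate the weighted matching polynomial directly from its definition. Each summand $\prod_{i\notin M}(x-r_i)\prod_{jk\in M}\lambda_{jk}$ depends on $x$ only through the factors $(x-r_i)$ over the uncovered vertices, so the product rule gives $\frac{d}{dx}\prod_{i\notin M}(x-r_i)=\sum_{j\notin M}\prod_{i\notin M,\,i\neq j}(x-r_i)$. Summing over all matchings produces a double sum over pairs $(M,j)$ with $j$ uncovered by $M$. The key observation is that such pairs are in bijection with pairs $(j,M)$ where $M$ ranges over the matchings of $G\setminus j$: a matching of $G$ leaving $j$ uncovered is precisely a matching of $G\setminus j$, and the surviving factors $\prod_{i\notin M,\,i\neq j}(x-r_i)$ are exactly the uncovered-vertex factors of $\mu(G\setminus j)$. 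Reindexing the double sum accordingly yields $\mu(G)'=\sum_{j\in[n]}\mu(G\setminus j)$.

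For the second identity I would start from $\alpha_i(G)=\mu(G)/\mu(G\setminus i)$ and apply the quotient rule, then substitute the first identity for both $\mu(G)'$ and $\mu(G\setminus i)'$. After peeling off the $j=i$ term from $\mu(G)'=\sum_j\mu(G\setminus j)$, which contributes $\mu(G\setminus i)^2$ to the numerator and hence the constant $1$, the remaining numerator becomes $\sum_{j\neq i}\bigl[\mu(G\setminus i)\mu(G\setminus j)-\mu(G)\mu(G\setminus i,j)\bigr]$. Applying Lemma \ref{Christoffel-Darboux} to each term converts it into $\sum_{c\in[i\to j]}\lambda_c\,\mu(G\setminus c)^2$, which gives the first expression once we divide by $\mu(G\setminus i)^2$. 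To reach the second expression I would rewrite each term using Lemma \ref{Contraction}: since $\lambda_{i\sim j}=-\sum_{c\in[i\to j]}\lambda_c\,\mu(G\setminus c)^2/\mu(G\setminus i,j)^2$ and $\alpha_j(G\setminus i)=\mu(G\setminus i)/\mu(G\setminus i,j)$, the factors $\mu(G\setminus i,j)^2$ cancel, showing that the $\lambda_c$-sum over $[i\to j]$ divided by $\mu(G\setminus i)^2$ equals $-\lambda_{i\sim j}/\alpha_j(G\setminus i)^2$; summing over $j\neq i$ completes the identity.

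The computations here are routine, and I do not expect a genuine obstacle. The only points requiring care lie in the second part: correctly isolating the $j=i$ term to produce the constant $1$ before invoking Christoffel-Darboux, and tracking the cancellation of the denominators $\mu(G\setminus i,j)^2$ when passing to the $\lambda_{i\sim j}$ form.
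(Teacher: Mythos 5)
Your proof is correct, but it follows a different route from the paper's. For the first identity, the paper invokes an induction as in Godsil's book, whereas you differentiate the defining sum term by term and use the bijection between matchings of $G$ avoiding $j$ and matchings of $G\setminus j$; this is a clean, self-contained argument and entirely valid. For the second identity, the paper differentiates the recurrence
\[\alpha_i(G)(x)=x-r_i+\sum_{j\neq i}\frac{\lambda_{ij}}{\alpha_j(G\setminus i)(x)}\]
and iterates it down the path tree, where the telescoping products $\prod_m \alpha_{i_{m+1}}(G\setminus i_1,\dots,i_m)$ collapse to $\mu(G\setminus c)/\mu(G\setminus i)$ and yield the path-sum expression directly, with no appeal to Christoffel--Darboux. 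Your argument --- quotient rule, substitution of the first identity into both $\mu(G)'$ and $\mu(G\setminus i)'$, isolation of the $j=i$ term to produce the constant $1$, and then Lemma \ref{Christoffel-Darboux} followed by Lemma \ref{Contraction} to pass between the two displayed forms --- is precisely the alternative the paper mentions in one sentence (``An alternative proof can be given using the first item and the Lemma \ref{Christoffel-Darboux}'') but does not carry out. What the paper's route buys is coherence with its continued-fraction formalism and independence from the Christoffel--Darboux identity; what your route buys is a single computation that delivers both displayed expressions at once and makes their equivalence transparent from the definition of $\lambda_{i\sim j}$. One small point worth making explicit in your write-up: applying the first identity to $\mu(G\setminus i)'$ uses that identity for the graph $G\setminus i$, which is legitimate since it holds for every graph.
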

\begin{proof} The first item is a direct consequence of item $(d)$ of Lemma~\ref{recurrencesMatching}. For an alternative proof see Ku and Wong's work~\cite[p. 3390, Thm. 2.1]{ku2013gallai}.

For the second item consider the derivative of the recurrence:

\[\alpha_i(G)(x)=x-r_i+\ds\sum_{i\neq j}\dfrac{\lambda_{ij}}{\alpha_j(G\setminus i)(x)}\implies \alpha_i(G)'(x)=1+\ds\sum_{i\neq j}\dfrac{-\lambda_{ij}\alpha_j(G\setminus i)'(x)}{(\alpha_j(G\setminus i)(x))^2}.\]

Iterating the recurrence for the derivative the second item immediately follows. An alternative proof can be given using the first item and the Lemma~\ref{Christoffel-Darboux}.
\end{proof}

\begin{cor}\label{derivative1} Let $G$ be a rooted graph with root $i$. Then all the zeros and poles of $\alpha_i(G)$ are simple. If $\theta$ is not a pole of $\alpha_i(G)$, then $\alpha_i(G)'(\theta)\geq 1$. In particular, $\alpha_i(G)(x)$ is increasing and surjective in each of its branches.
\end{cor}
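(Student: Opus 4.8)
The plan is to extract everything from the first-derivative formula of Lemma \ref{derivative0}, which already encodes the needed positivity. First I would read off the sign information it carries. Since the edge weights are non-positive, each $\lambda_c$ is a product of the non-negative quantities $-\lambda_e$, so $\lambda_c\ge 0$; and at any real $\theta$ that is not a pole of $\alpha_i(G)$—so that $\mu(G\setminus i)(\theta)\neq 0$, or, in the cancelled case, by continuity of the rational function $\alpha_i(G)'$ off its poles—every summand $\lambda_c\,(\mu(G\setminus c)/\mu(G\setminus i))^2(\theta)$ is a non-negative real number. Hence
\[\alpha_i(G)'(\theta)=1+\sum_{i\neq j\in[n]}\sum_{c\in[i\to j]}\lambda_c\left(\dfrac{\mu(G\setminus c)}{\mu(G\setminus i)}(\theta)\right)^2\ge 1,\]
which is the middle assertion of the corollary.

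Next I would upgrade this pointwise bound to the global statement about branches. By Corollary \ref{Heilmann-Lieb-real} the poles of $\alpha_i(G)$ (the zeros of $\mu(G\setminus i)$) are real; let $p_1<\dots<p_s$ be the distinct ones, dividing $\R$ into $s+1$ open intervals. On each interval $\alpha_i(G)$ has no pole, hence is smooth, and by the bound just proved it is strictly increasing. Since $\deg\mu(G)=\deg\mu(G\setminus i)+1$, the leading behaviour is $\alpha_i(G)(x)\sim x$, so $\alpha_i(G)\to-\infty$ as $x\to-\infty$ and $\alpha_i(G)\to+\infty$ as $x\to+\infty$; monotonicity then forces the one-sided limits at each $p_j$ to be $+\infty$ from the left and $-\infty$ from the right. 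Consequently, on every branch $\alpha_i(G)$ is a strictly increasing bijection onto $\R$, which is exactly the claimed increase and surjectivity.

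Finally, the simplicity. The zeros are immediate: a zero $\theta$ of $\alpha_i(G)$ is not a pole, so $\alpha_i(G)'(\theta)\ge 1>0$ and $\theta$ is simple. For the poles—this is the step I expect to be the only real obstacle, since the derivative bound by itself only forces a pole to have odd order—I would argue by counting. Writing $\alpha_i(G)=P/Q$ in lowest terms, all zeros of $P$ and $Q$ are real (they divide $\mu(G)$ and $\mu(G\setminus i)$) and $\deg P=\deg Q+1$. By the branch analysis $\alpha_i(G)$ has exactly one zero in each of the $s+1$ intervals, all simple, so $P$ has precisely $s+1$ real zeros and $\deg P=s+1$, whence $\deg Q=s$. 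As $Q$ has $s$ distinct real zeros and degree $s$, every zero of $Q$—that is, every pole of $\alpha_i(G)$—is simple, completing the proof.
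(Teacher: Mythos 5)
Your proof is correct and takes essentially the same route as the paper: the pointwise bound $\alpha_i(G)'(\theta)\geq 1$ from Lemma \ref{derivative0} extended by continuity through the cancelled case, simplicity of zeros from positivity of the derivative, and simplicity of poles by the counting argument based on $\deg\mu(G)=\deg\mu(G\setminus i)+1$ together with the one-zero-per-branch observation. Your explicit bookkeeping with $P/Q$ in lowest terms is just a slightly more detailed phrasing of the paper's same count.
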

\begin{proof} If $\mu(G\setminus i)(\theta)\neq 0$, then the Lemma~\ref{derivative0} implies that $\alpha_i(G)'(\theta)\geq 1$.
Note that $\alpha_i(G)'$ is continuous at every $\theta$ that is not a pole of $\alpha_i(G)$. It follows then that $\alpha_i(G)'(\theta)\geq 1$ for every $\theta$ that is not a pole of $\alpha_i(G)$. In particular, $\alpha_i(G)$ is increasing and surjective in each of its branches and all of its zeros are simple. 

Observe that, since $\deg(\mu(G))=\deg(\mu(G\setminus i))+1$, the number of zeros of $\alpha_i(G)$ is one more than the number of poles counted with multiplicity of $\alpha_i(G)$. But in each branch, because $\alpha_i(G)$ is increasing, there can only be one zero of $\alpha_i(G)$. Putting this all together, it follows that all the poles of $\alpha_i(G)$ are also simple.
\end{proof}

The last result gives a precise picture of how a graph of $\alpha_i(G)(x)$ must look like. In Figure~\ref{graph} we present an example of such a graph.

\begin{figure}[h]
	\includegraphics[width=\linewidth]{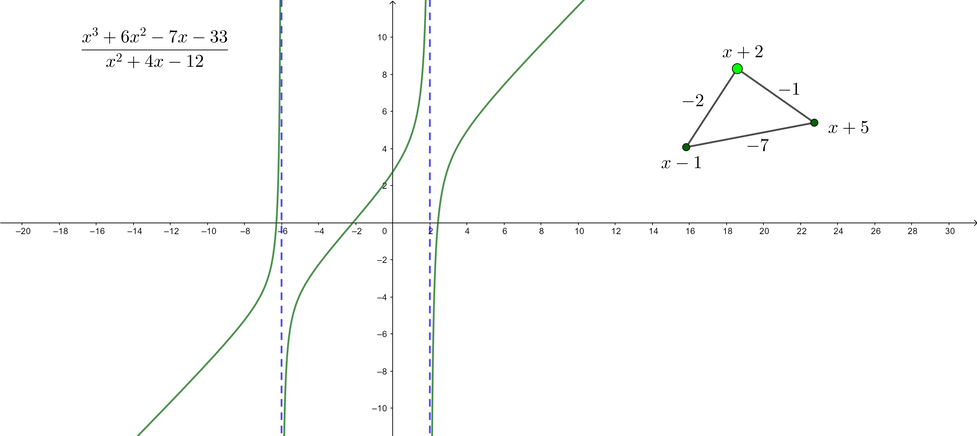}
	\caption{An example of a graph of $\alpha_i(G)(x)$.}
	\label{graph}
\end{figure}

Corollary~\ref{derivative1} also implies the interlacing for the zeros of $\mu(G)$ and $\mu(G\setminus i)$ mentioned in Section~\ref{intro}. This result was originally proved by Heilmann and Lieb~\cite[p. 200, Thm. 4.2]{heilmann-lieb}.

\begin{cor}(Interlacing~\cite{heilmann-lieb})\label{interlacing} Let $i$ be a vertex in the graph $G$. Then the zeros of $\mu(G)$ and $\mu(G\setminus i)$ interlace, i.e. between any two zeros of $\mu(G)$ there is a zero of $\mu(G\setminus i)$ and vice versa. It is also true that $m_\theta(G\setminus i)$ belongs to $\{m_\theta(G),m_\theta(G)\pm 1\}$ for every real number $\theta$.
\end{cor}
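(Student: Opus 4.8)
The plan is to read off everything from the graph of the rational function $\alpha_i(G)=\mu(G)/\mu(G\setminus i)$, whose shape is completely pinned down by the results just proved. By Corollary \ref{Heilmann-Lieb-real} all its zeros and poles are real, and by Corollary \ref{derivative1} they are all simple and on each branch (each maximal pole-free interval) the function is strictly increasing and surjective onto $\mathbb{R}$. Since $\deg\mu(G)=\deg\mu(G\setminus i)+1$ and both polynomials are monic, $\alpha_i(G)$ has exactly one more zero than pole, so each branch contains precisely one zero and the zeros and poles strictly alternate.

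First I would settle the multiplicity claim. Writing the local factorizations $\mu(G)(x)=(x-\theta)^{m_\theta(G)}g(x)$ and $\mu(G\setminus i)(x)=(x-\theta)^{m_\theta(G\setminus i)}h(x)$ with $g(\theta),h(\theta)\neq 0$, the order of vanishing of $\alpha_i(G)$ at $\theta$ is exactly $m_\theta(G)-m_\theta(G\setminus i)$, a negative value signalling a pole. Because every zero and every pole of $\alpha_i(G)$ is simple, this order lies in $\{-1,0,1\}$, which is precisely the assertion $m_\theta(G\setminus i)\in\{m_\theta(G),m_\theta(G)\pm 1\}$.

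For the interlacing I would argue by contradiction on each side, using the bookkeeping fact that a zero of the rational function $\alpha_i(G)$ forces $m_\theta(G)\geq 1$ and hence is a zero of $\mu(G)$, while a pole forces $m_\theta(G\setminus i)\geq 1$ and hence is a zero of $\mu(G\setminus i)$; conversely, a zero of $\mu(G)$ at which $\mu(G\setminus i)$ does not vanish is a zero of $\alpha_i(G)$, and symmetrically for poles. Given two consecutive distinct zeros $\theta_1<\theta_2$ of $\mu(G)$, suppose $\mu(G\setminus i)$ had no zero in $[\theta_1,\theta_2]$; then $\alpha_i(G)$ is pole-free there and vanishes at both endpoints, producing two zeros inside a single branch and contradicting strict monotonicity. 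The reverse direction is the mirror image: if two consecutive distinct zeros $\sigma_1<\sigma_2$ of $\mu(G\setminus i)$ admitted no zero of $\mu(G)$ in $[\sigma_1,\sigma_2]$, then $\sigma_1,\sigma_2$ are consecutive poles of $\alpha_i(G)$, so the branch structure forces a zero of $\alpha_i(G)$, hence of $\mu(G)$, strictly between them, again a contradiction.

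The step I expect to require the most care is the treatment of common zeros of $\mu(G)$ and $\mu(G\setminus i)$: there the naive identification ``zeros of $\alpha_i(G)$ $=$ zeros of $\mu(G)$'' breaks down, and one must instead reason with orders of vanishing as above. Once the multiplicity statement is available this becomes harmless, since a common zero has its two multiplicities differing by at most one and is automatically witnessed by the point itself; but it must be folded explicitly into the contradiction arguments, so that ``no zero of $\mu(G\setminus i)$ in $[\theta_1,\theta_2]$'' genuinely certifies that $\alpha_i(G)$ is pole-free there, and likewise on the reverse side.
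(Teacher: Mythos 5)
Your proposal is correct and follows essentially the same route as the paper: both derive the multiplicity statement from the simplicity of the zeros and poles of $\alpha_i(G)$ (Corollary \ref{derivative1}), and both obtain the interlacing from the resulting branch structure of $\alpha_i(G)$ (zeros and poles alternating, one zero per monotone surjective branch). The paper states this deduction in three sentences; you have merely filled in the order-of-vanishing bookkeeping and the common-zero caveat that the paper leaves implicit.
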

\begin{proof} By Corollary~\ref{derivative1} the zeros and poles of $\alpha_i(G)$ are simple. This implies that $m_\theta(G\setminus i)$ belongs to $\{m_\theta(G),m_\theta(G)\pm 1\}$ for every real number $\theta$. The interlacing of the zeros of $\mu(G)$ and $\mu(G\setminus i)$ follows from the interlacing of the zeros and poles of $\alpha_i(G)$ and this last observation about the multiplicities $m_\theta(G)$ and $m_\theta(G\setminus i)$.
\end{proof}

Given a real parameter $\theta$, partition the vertices of the graph $G$ into four sets according to the sign of the graph continued fraction with each vertex as a root. That is, if $i$ is a vertex then:

\begin{itemize}
	\item $i\in -_{\theta,G}$ if $\alpha_i(G)(\theta)$ is negative;
	\item $i\in 0_{\theta,G}$ if $\alpha_i(G)(\theta)$ is zero;
	\item $i\in +_{\theta,G}$ if $\alpha_i(G)(\theta)$ is positive;
	\item $i\in \infty_{\theta,G}$ if $\alpha_i(G)(\theta)$ is infinite.
\end{itemize}

This way we have the partition $[n]=-_{\theta,G}\sqcup 0_{\theta,G}\sqcup +_{\theta,G}\sqcup\infty_{\theta,G}$. Define also $\pm_{\theta,G}:=-_{\theta,G}\sqcup +_{\theta,G}$. Note that by Corollary~\ref{interlacing}:

\begin{itemize}
	\item $i\in 0_{\theta,G}$ if $m_\theta(G\setminus i)=m_\theta(G)-1$;
	\item $i\in \pm_{\theta,G}$ if $m_\theta(G\setminus i)=m_\theta(G)$;
	\item $i\in \infty_{\theta,G}$ if $m_\theta(G\setminus i)=m_\theta(G)+1$.
\end{itemize}

It follows that with the notation of Section~\ref{intro}: $0_{\theta,G}=D_{\theta,G}$, $\pm_{\theta,G}=N_{\theta,G}$, $A_{\theta,G}=\partial 0_{\theta,G}$ and $\infty_{\theta,G}\setminus \partial 0_{\theta,G}=P_{\theta,G}$. This shows that the partition $[n]=-_{\theta,G}\sqcup 0_{\theta,G}\sqcup +_{\theta,G}\sqcup\infty_{\theta,G}$ refines the one considered by Ku and Wong in~\cite[p. 3389]{ku2013gallai}, where there was no distinction between $+_{\theta,G}$ and $-_{\theta,G}$.

From Corollary~\ref{derivative1}, or looking at Figure~\ref{graph}, it can be seen that as the parameter $\theta$ increases from $-\infty$ to $+\infty$ the sign of $\alpha_i(G)(\theta)$ always changes in a prescribed order: $-\rightarrow 0\rightarrow +\rightarrow\infty\rightarrow -$. This already shows that as $\theta$ is varied the partitions of $[n]$ change according to some rules. The parameter $\theta$ is seen as a time variable determining the values of the graph continued fractions and partitions of $[n]$.

Clearly, if $\theta$ is not a zero of $\mu(G)$ then the set $0_{\theta,G}$ is empty. As observed by
Godsil~\cite[p. 5, Lem. 3.1]{godsil1995algebraic}, it turns out that the converse is also true.

\begin{lem}(Godsil~\cite{godsil1995algebraic})\label{zeroset} The real number $\theta$ is a zero of $\mu(G)$ if, and only if, $0_{\theta,G}$ is non-empty.
\end{lem}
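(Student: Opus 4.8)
The plan is to prove the two implications separately, since one of them is essentially free. The direction ``$0_{\theta,G}$ non-empty $\implies \theta$ is a zero of $\mu(G)$'' is immediate and is already recorded in the text as its contrapositive: if $i\in 0_{\theta,G}$, then $\alpha_i(G)(\theta)=\dfrac{\mu(G)}{\mu(G\setminus i)}(\theta)=0$, and this forces $\mu(G)(\theta)=0$. So the whole content of the lemma is in the converse, and that is where I would focus.

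For the converse, I would argue purely in terms of multiplicities. Write $m=m_\theta(G)\geq 1$. Using the characterization recorded just after Corollary \ref{interlacing}, a vertex $j$ lies in $0_{\theta,G}$ precisely when $m_\theta(G\setminus j)=m-1$; moreover, by the same corollary every vertex satisfies $m_\theta(G\setminus j)\geq m-1$. Hence it suffices to produce a single vertex at which the multiplicity genuinely drops by one.

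The key tool for this is the first item of Lemma \ref{derivative0}, namely $\mu(G)'(x)=\sum_{j\in[n]}\mu(G\setminus j)(x)$. Since $\theta$ is a zero of $\mu(G)$ of multiplicity $m\geq 1$ and we work over $\R$, the number $\theta$ is a zero of $\mu(G)'$ of multiplicity exactly $m-1$. I would then argue by contradiction: if no vertex belonged to $0_{\theta,G}$, then each $\mu(G\setminus j)$ would vanish at $\theta$ to order at least $m$, so $(x-\theta)^m$ would divide the sum $\sum_{j}\mu(G\setminus j)=\mu(G)'$, contradicting that $\theta$ is a zero of $\mu(G)'$ of order exactly $m-1$. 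Therefore some vertex $j$ satisfies $m_\theta(G\setminus j)=m-1$, that is, $0_{\theta,G}$ is non-empty.

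I do not expect a genuine obstacle here; the single point requiring a moment of care is the elementary fact that differentiating a real polynomial lowers the multiplicity of a zero by exactly one (so that the order of $\theta$ in $\mu(G)'$ is precisely $m-1$, not merely at least $m-1$). It is exactly this exactness that turns the divisibility observation into a contradiction and delivers the desired vertex.
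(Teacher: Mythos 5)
Your proof is correct and is essentially the paper's own argument: the paper likewise rests on the identity $\mu(G)'=\sum_{j\in[n]}\mu(G\setminus j)$ from Lemma \ref{derivative0}, read through the logarithmic derivative $\mu(G)'/\mu(G)=\sum_{j\in[n]}1/\alpha_j(G)$, which has a pole at $\theta$ and therefore forces some $\alpha_j(G)(\theta)=0$. Your multiplicity phrasing --- differentiation drops the order of vanishing at $\theta$ to exactly $m-1$, while every summand $\mu(G\setminus j)$ would vanish to order at least $m$ if $0_{\theta,G}$ were empty --- is the same mechanism, just stated via divisibility instead of dividing by $\mu(G)$ and tracking the pole.
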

\begin{proof} If $\theta$ is a zero of $\mu(G)$, then $\infty=\dfrac{\mu(G)'}{\mu(G)}(\theta)=\ds\sum_{j\in[n]}\dfrac{\mu(G\setminus j)}{\mu(G)}(\theta)=\ds\sum_{j\in[n]}\dfrac{1}{\alpha_j(G)(\theta)}$, which implies that there exists a vertex $j$ satisfying $\alpha_j(G)(\theta)=0$, i.e. $j\in 0_{\theta,G}$.
\end{proof}

The same proof of this last lemma implies:

\begin{lem}\label{internal} A vertex $i$ is in $\infty_{\theta,G}$ if, and only if, one of its neighbors is in $0_{\theta,G\setminus i}$.
\end{lem}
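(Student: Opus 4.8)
The plan is to read the statement off the defining recurrence for the graph continued fraction, exactly in the spirit of the proof of Lemma \ref{zeroset}. Recall from Section \ref{graphcfs} (see also the proof of Lemma \ref{derivative0}) that
\[\alpha_i(G)(x)=x-r_i+\ds\sum_{i\neq j}\dfrac{\lambda_{ij}}{\alpha_j(G\setminus i)(x)},\]
where only the neighbors of $i$, that is the vertices $j$ with $\lambda_{ij}\neq 0$, contribute to the sum. Since $i\in\infty_{\theta,G}$ means precisely that $\alpha_i(G)(\theta)=\infty$, and the term $\theta-r_i$ is finite, the whole question reduces to deciding when the sum $\ds\sum_{i\neq j}\dfrac{\lambda_{ij}}{\alpha_j(G\setminus i)(x)}$ has a pole at $\theta$.

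First I would dispatch the \emph{only if} direction, which is immediate. For a neighbor $j$ the value $\alpha_j(G\setminus i)(\theta)$ is either $\infty$ (making its term vanish), a nonzero real (making its term finite), or zero. Hence if no neighbor lies in $0_{\theta,G\setminus i}$, then every term of the sum is finite at $\theta$, so $\alpha_i(G)(\theta)$ is finite and $i\notin\infty_{\theta,G}$. Contrapositively, $i\in\infty_{\theta,G}$ forces some neighbor $j$ with $\alpha_j(G\setminus i)(\theta)=0$, that is $j\in 0_{\theta,G\setminus i}$.

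For the \emph{if} direction I would invoke Corollary \ref{derivative1} to rule out cancellation of poles, which is the only genuinely delicate point. Suppose some neighbor $j$ satisfies $\alpha_j(G\setminus i)(\theta)=0$. By Corollary \ref{derivative1} this zero is simple and $\alpha_j(G\setminus i)'(\theta)\geq 1$, so near $\theta$ the corresponding term behaves like $\dfrac{\lambda_{ij}}{\alpha_j(G\setminus i)'(\theta)}\cdot\dfrac{1}{x-\theta}$, a simple pole whose residue $\lambda_{ij}/\alpha_j(G\setminus i)'(\theta)$ is strictly negative because $\lambda_{ij}<0$ for a neighbor. Crucially, every singular term arising this way has a negative residue, so summing over the neighbors in $0_{\theta,G\setminus i}$ cannot produce a cancellation: the total residue is a sum of negative numbers, hence nonzero. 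Therefore $\alpha_i(G)$ genuinely blows up at $\theta$, giving $i\in\infty_{\theta,G}$.

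The main obstacle is exactly this no-cancellation step; everything else is bookkeeping on the recurrence. The sign hypothesis on the edge weights (non-positive $\lambda_{ij}$, with $\lambda_{ij}\neq 0$ precisely for neighbors) together with the monotonicity $\alpha_j(G\setminus i)'\geq 1$ supplied by Corollary \ref{derivative1} is what makes all residues share the same sign, and this is presumably the feature that the phrase \emph{the same proof of this last lemma} is pointing to: in Lemma \ref{zeroset} the residues $1/\alpha_j(G)'(\theta)$ were all positive for the same monotonicity reason, so no cancellation occurred there either.
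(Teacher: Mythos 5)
Your proof is correct and follows essentially the same route as the paper, which likewise reads the statement directly off the recurrence $\alpha_i(G)(\theta)=\theta-r_i+\sum_{j\neq i}\lambda_{ij}/\alpha_j(G\setminus i)(\theta)$. The one difference is that the paper treats the ``if'' direction as immediate, whereas you explicitly rule out cancellation of poles via the sign of the residues $\lambda_{ij}/\alpha_j(G\setminus i)'(\theta)$ (using Corollary \ref{derivative1}); this fills in a detail the paper leaves implicit, and is a genuine strengthening of the write-up rather than a departure from its method.
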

\begin{proof} Observe that, $\infty=\alpha_i(G)(\theta)=\theta-r_i+\ds\sum_{i\neq j}\dfrac{\lambda_{ij}}{\alpha_j(G\setminus i)(\theta)}$ if, and only if, there exists a vertex $j$ satisfying $\lambda_{ij}\neq 0$ and $\alpha_j(G\setminus i)(\theta)=0$, i.e. $j$ is a neighbor of $i$ that belongs to $0_{\theta,G\setminus i}$.
\end{proof}

This last result is best interpreted using the path tree. From Corollary~\ref{derivative1}, or looking at Figure~\ref{graph}, it is clear that a vertex is in $0_{\theta,G}$ if, and only if, it is in the intersection $-_{\theta-\epsilon,G}\cap +_{\theta+\epsilon,G}$ for every $\epsilon>0$ sufficiently small. This means that a vertex $i$ is in $0_{\theta,G}$ if, and only if, $\alpha_i(G)(x)$ changes sign from $-$ to $+$ at time $\theta$. A similar reasoning applies for $\infty_{\theta,G}$.

Using the recurrence $\alpha_i(G)(x)=x-r_i+\ds\sum_{\mathclap{\substack{i\neq j\\ \lambda_{ij}\neq 0}}}\quad\dfrac{\lambda_{ij}}{\alpha_j(G\setminus i)(x)}$, the Lemma~\ref{internal} can be interpreted as saying that $\alpha_i(G)(x)$ changes sign from  $+$ to $-$ at time $\theta$ if, and only if, for some neighbor $j$ of $i$, $\alpha_j(G\setminus i)(x)$ changes sign from $-$ to $+$ at time $\theta$. 

Consider the path tree $T^i_G$. Recall that each path starting at $i$ corresponds to a vertex in the path tree $T^i_G$. For a given path $c$ starting at $i$ if we consider all the paths that are extensions of $c$ we obtain a rooted subtree $T^c_G$ of $T^i_G$ with root $c$. If the path is $c:i=i_1\to i_k$ then it is easy to see that $\alpha_c(T^c_G)$ is equal to $\alpha_{i_k}(G\setminus \{i_1,\dots,i_{k-1}\})$. 

Write for each vertex of the path tree $T^i_G$ the sign of the graph continued fraction for its respective rooted subtree at time $\theta$, i.e. for the vertex corresponding to the path $c$ consider the sign of the graph continued fraction $\alpha_c(T^c_G)(\theta)$. This is illustrated in Figure~\ref{pinheiro}.

\begin{figure}[h]
	\includegraphics[width=\linewidth]{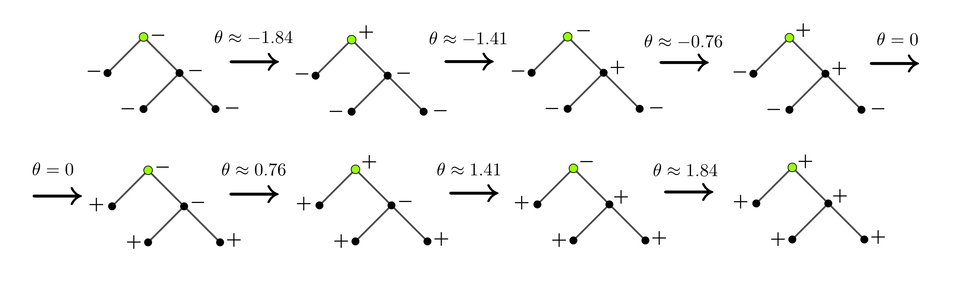}
	\caption{A rooted tree with vertex weights $x$ and edge weights $-1$. The signs are of the graph continued fractions of the subtrees. As time passes the plus signs fill in the tree.}
	\label{pinheiro}
\end{figure}

Observe that, by Corollary~\ref{derivative1}, for large negative times all subtrees have sign $-$, and for large positive times all subtrees have sign $+$. As the time $\theta$ goes by the $+$ signs are created at the root of the path tree and descend, sometimes duplicating, but always respecting the following rule: a node changes from $+$ to $-$ at time $\theta$ if, and only if, one of its subnodes changes from $-$ to $+$ at time $\theta$. This is the promised interpretation of Lemma~\ref{internal} in terms of the path tree. An illustration of how these signals evolve with the time $\theta$ is also shown in Figure~\ref{pinheiro}.

This interpretation in terms of the path tree is particularly interesting when studying paths. Let $c:i=i_1\to i_k$ be a path in the graph $G$. Observe that,

\[\dfrac{\mu(G)}{\mu(G\setminus c)}=\alpha_{i_1}(G)\alpha_{i_2}(G\setminus i_1)\cdots \alpha_{i_k}(G\setminus \{i_1,\dots,i_{k-1}\})=\]

\[=\alpha_{i_k}(G)\alpha_{i_{k-1}}(G\setminus i_k)\cdots \alpha_{i_1}(G\setminus \{i_k,i_{k-1},\dots,i_2\}).\]

It turns out that the difference of multiplicities $m_\theta(G)-m_\theta(G\setminus c)$ can also be interpreted in terms of the signs in the path tree at time $\theta$. The multiplicity $m_\theta(G)-m_\theta(G\setminus c)$ is equal to the number of zeros minus the number of infinities along the sub-paths of the path $c$ in the path tree $T^i_G$ at time $\theta$. More precisely,

\[m_\theta(G)-m_\theta(G\setminus c)=|\{i_j\in 0_{\theta,G\setminus \{i_1,\dots,i_{j-1}\}}\}_{j\in [k]}|-|\{i_j\in \infty_{\theta,G\setminus \{i_1,\dots,i_{j-1}\}}\}_{j\in [k]}|=\]

\[=|\{i_j\in 0_{\theta,G\setminus \{i_k,\dots,i_{j+1}\}}\}_{j\in [k]}|-|\{i_j\in \infty_{\theta,G\setminus \{i_k,\dots,i_{j+1}\}}\}_{j\in [k]}|.\]

The second equality corresponds to the same statement but for the reverse path $-c:i_k\to i_1$. In particular, this shows that the difference of zeros and infinities along the sub-paths of the path $c:i=i_1\to i_k$ in the path tree $T^i_G$ is equal to the same counting for the path $-c:i_k\to i_1=i$ in the path tree $T^{i_k}_G$.

As a consequence we have the following result, originally proved by Godsil~\cite[p. 4-6, Cor. 2.5 and Lem. 3.3]{godsil1995algebraic}.

\begin{lem}[Godsil~\cite{godsil1995algebraic}]\label{entrylessthan1} Let $c:i\to j$ be a path in the graph $G$. In this case, $m_\theta(G)-m_\theta(G\setminus c)\leq 1$, and, if there is equality, then both $i$ and $j$ are in $0_{\theta,G}$.
\end{lem}
\begin{proof} Let $c:i_1\to i_m$ be a path in the graph $G$. First, notice that, by Lemma~\ref{internal} or its interpretation in terms of the path tree, whenever there is a zero in a node of the path tree $T^{i_1}_G$ there must be an infinity for the node right above it. In other words, if $\alpha_{i_{k+1}}(G\setminus \{i_1,\dots,i_k\})=0$ for some $k\in [m-1]$, then $\alpha_{i_k}(G\setminus \{i_1,\dots,i_{k-1}\})=\infty$. This implies that the number of zeros is less than the number of infinities along the sub-paths of the path $c$ in the path tree $T^{i_1}_G$, from which follows that $m_\theta(G)-m_\theta(G\setminus c)\leq 1$. 

If the path $c$ satisfies $m_\theta(G)-m_\theta(G\setminus c)=1$, then, by the same reasoning above, all the infinities along the sub-paths of the path $c$ in $T_G^{i_1}$ come from a zero inside the same path $c$. But there is also one extra zero which does not have a corresponding infinity. Since the extra zero does not have a corresponding infinity, by the observation above, it must be at the root of the path tree $T_G^{i_1}$. This implies that $i_1$ is in $0_{\theta,G}$. The same reasoning for the reverse path $-c$ implies that $i_m$ is also in $0_{\theta,G}$. 
\end{proof}

Observe that except for a finite number of times $\theta$, namely those that are a zero of a matching polynomial of a subgraph of the graph $G$, there are only plus and minus signs for all subtrees of the path tree $T^i_G$. Consider a time $\theta$ with this property. In this case, there is also an interpretation for the number of plus signs along the sub-paths of the path $c:i=i_1\to i_k$ in the path tree $T^i_G$.

The variation of the number of plus signs along the sub-paths of the path $c$ in $T^i_G$ at time $\theta$ is equal to $m_\theta(G)-m_\theta(G\setminus c)$. This shows that generically:

\[\ds\sum_{x<\theta}(m_x(G)-m_x(G\setminus c))=|\{i_j\in+_{\theta,G\setminus \{i_1,\dots,i_{j-1}\}}\}_{j\in [k]}|=|\{i_j\in+_{\theta,G\setminus \{i_k,\dots,i_{j+1}\}}\}_{j\in [k]}|.\]

The fact that $m_\theta(G)-m_\theta(G\setminus c)\leq 1$, proved in Lemma~\ref{entrylessthan1}, means that the number of plus signs along the sub-paths of the path $c$ in $T^i_G$ can increase by at most one at time $\theta$.
This is clear because the plus signs can only enter the path $c$ through the root of $T^i_G$. If $m_\theta(G)-m_\theta(G\setminus c)=1$, then the number of plus signs increases by one at time $\theta$.

This interpretation leads to the following result, of which the last part appears already in the work of Godsil~\cite[p. 296, Cor. 5.3]{godsil_matchings}.

\begin{lem}\label{simultaneous} Let $c:i\to j$ be a path of length $k$ in the graph $G$. Then, there are at least $k+1$ distinct times $\theta$ such that both $i$ and $j$ are simultaneously in $0_{\theta,G}$. In particular, $\mu(G)$ has at least $k+1$ distinct zeros.
\end{lem}
\begin{proof} Note that to fill in the path $c:i=i_1\to i_{k+1}=j$ with plus signs in $T^i_G$, the path $c$ must satisfy $m_\theta(G)-m_\theta(G\setminus c)= 1$ for at least $k+1$ distinct times $\theta$. By Lemma~\ref{entrylessthan1} at these times $\theta$ both $i$ and $j$ are simultaneously in $0_{\theta,G}$.
\end{proof}

For hamiltonian paths this result has the following immediate corollary.

\begin{cor}[Godsil~\cite{godsil_matchings}]\label{hamiltonian} Let $c$ be a hamiltonian path in the graph $G$. Then, $\mu(G)$ has distinct zeros.
\end{cor}

Now, consider a hamiltonian path $c:i=i_1\to i_n$ in the graph $G$. Denote by $V_c(\theta)$ the number of plus signs along the sub-paths of the hamiltonian path $c$ in the path tree $T^i_G$ at time $\theta$, i.e. $V_c(\theta)=|\{i_j\in+_{\theta,G\setminus \{i_1,\dots,i_{j-1}\}}\}_{j\in [n]}|$. Similarly, denote by $V_{-c}(\theta)$ the analogous counting for the reverse path $-c$. Since $G\setminus c=\emptyset$, we obtain,  

\[\ds\sum_{x<\theta}m_x(G)=V_c(\theta)=V_{-c}(\theta),\].

But by Corollary~\ref{hamiltonian} the zeros of $\mu(G)$ are distinct. Putting all this together, we obtain the Sylvester Theorem~\ref{newsylvester} mentioned in Section~\ref{intro}. 

\begin{thm}(Sylvester theorem) Let $c$ be a Hamiltonian
path in the graph $G$. Then, $\mu(G)$ has distinct zeros and both $V_c(\theta)$ and $V_{-c}(\theta)$, when defined, are equal to the number of zeros of $\mu(G)$ in the interval $(-\infty,\theta)$.
\end{thm}

We now focus on the the Stability Lemma~\ref{newstability}. In order to prove the Stability Lemma~\ref{newstability} we must study how a graph continued fraction changes when a vertex is deleted. To approach this problem we use the contraction Lemma~\ref{Contraction}. 

For distinct vertices $i$ and $j$ it was previously observed that $-\lambda_{i\sim j}$ is a sum of squares. So $\lambda_{i\sim j}(\theta)$ is in $[-\infty,0]$, but it can happen that $\lambda_{i\sim j}$ is equal to $-\infty$ at time $\theta$. The next lemma gives a condition that guarantees $\lambda_{i\sim j}(\theta)$ is finite.

\begin{pps}\label{finitelambda} If $\lambda_{i\sim j}(\theta)=-\infty$, then $i\in\infty_{\theta,G\setminus j}$ and $j\in\infty_{\theta,G\setminus i}$. 
\end{pps}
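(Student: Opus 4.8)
The plan is to translate the analytic hypothesis $\lambda_{i\sim j}(\theta)=-\infty$ into a combinatorial statement about multiplicities, and then feed that statement into Lemma \ref{internal}. Recall from Lemma \ref{Contraction} that
\[
-\lambda_{i\sim j}=\frac{\ds\sum_{c\in[i\to j]}\lambda_c\cdot\mu(G\setminus c)^2}{\mu(G\setminus i,j)^2}.
\]
Since every edge weight is non-positive we have $\lambda_c\ge 0$, so the numerator is a non-negative combination of squares, and in particular $-\lambda_{i\sim j}\ge 0$, so the hypothesis means $-\lambda_{i\sim j}(\theta)=+\infty$. The point I would stress is that there is \emph{no cancellation} among the lowest-order terms of the numerator at $\theta$: if $e:=\min\{m_\theta(G\setminus c):c\in[i\to j],\ \lambda_c\neq 0\}$, then each term of minimal order $2e$ has a strictly positive leading coefficient, so the numerator vanishes to order exactly $2e$. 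Hence the pole of $-\lambda_{i\sim j}$ forces the denominator to vanish strictly faster, i.e. $2e<2\,m_\theta(G\setminus i,j)$. In particular $m_\theta(G\setminus i,j)\ge 1$, and there is a path $c_0\in[i\to j]$ with $\lambda_{c_0}\neq 0$ and $m_\theta(G\setminus c_0)<m_\theta(G\setminus i,j)$.

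Next I would extract a zero-vertex adjacent to each of $i$ and $j$. Write $c_0:i=v_0,v_1,\dots,v_k=j$. The single edge $k=1$ is impossible, since then $G\setminus c_0=G\setminus i,j$ and the strict inequality fails; so $k\ge 2$ and the interior $c_0^\circ:v_1\to v_{k-1}$ is a path in $G\setminus i,j$ with $(G\setminus i,j)\setminus c_0^\circ=G\setminus c_0$. Thus $m_\theta(G\setminus i,j)-m_\theta((G\setminus i,j)\setminus c_0^\circ)\ge 1$, and since this difference never exceeds $1$ for any path, it equals $1$; that is, $c_0^\circ$ is a $0$-path at $\theta$ in the graph $G\setminus i,j$ (when $k=2$ this is just the statement $v_1\in 0_{\theta,G\setminus i,j}$). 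By the endpoint property of $0$-paths, both $v_1$ and $v_{k-1}$ lie in $0_{\theta,G\setminus i,j}$.

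To finish I would apply Lemma \ref{internal} twice. The vertex $v_1$ is a neighbor of $i$ and belongs to $0_{\theta,(G\setminus j)\setminus i}=0_{\theta,G\setminus i,j}$, so Lemma \ref{internal} in the graph $G\setminus j$ gives $i\in\infty_{\theta,G\setminus j}$. Symmetrically $v_{k-1}$ is a neighbor of $j$ lying in $0_{\theta,(G\setminus i)\setminus j}=0_{\theta,G\setminus i,j}$, so Lemma \ref{internal} in $G\setminus i$ gives $j\in\infty_{\theta,G\setminus i}$, which is exactly the assertion.

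The step I expect to be the crux is the ``no cancellation'' observation of the first paragraph: it is what converts the pole of $\lambda_{i\sim j}$ at $\theta$ into the strict multiplicity drop $m_\theta(G\setminus c_0)<m_\theta(G\setminus i,j)$ for some concrete path. This uses essentially that the numerator is a genuine sum of squares, a consequence of the sign hypothesis $\lambda_c\ge 0$; without that hypothesis the numerator could vanish to order strictly larger than $2e$ through cancellation, the comparison of orders would break, and the distinguished path $c_0$ need not exist.
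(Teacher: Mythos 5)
Your proof is correct, and it is organized genuinely differently from the paper's. The paper proves the contrapositive: assuming $i\notin\infty_{\theta,G\setminus j}$, it shows $m_\theta(G\setminus c)\geq m_\theta(G\setminus i,j)$ for \emph{every} path $c\in[i\to j]$ by a case analysis ($i\in 0_{\theta,G\setminus j}$ versus $i\in\pm_{\theta,G\setminus j}$) applied to the truncated path $\overline{c}:i\to v_{k-1}$ inside $G\setminus j$; finiteness of $\lambda_{i\sim j}(\theta)$ then follows simply because a finite sum of terms with finite limits has a finite limit --- no positivity is needed in that direction. You instead argue forward, and this is where your ``no cancellation'' observation becomes essential: since each summand $\lambda_c\,\mu(G\setminus c)^2$ is non-negative, a pole of the sum forces a pole of some individual term, yielding a witness path $c_0$ with $m_\theta(G\setminus c_0)<m_\theta(G\setminus i,j)$. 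You then peel off \emph{both} endpoints, recognize the interior $c_0^\circ$ as a $0$-path in $G\setminus i,j$, invoke the endpoint property of $0$-paths to place $v_1,v_{k-1}$ in $0_{\theta,G\setminus i,j}$, and finish with Lemma \ref{internal} --- a lemma the paper's proof never uses. The two arguments draw on the same underlying machinery (the Christoffel--Darboux sum of squares and the fact that the multiplicity drop along a path is at most one; note the paper's second case is exactly the endpoint property in contrapositive form), but they package it differently: your version is constructive, pinpointing the path responsible for the pole and the zero-vertices adjacent to $i$ and $j$, at the cost of needing the sum-of-squares positivity; the paper's contrapositive treats all paths uniformly and gets by with the trivial direction of the finiteness equivalence.
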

\begin{proof} Assume $i$ is not in $\infty_{\theta,G\setminus j}$. In order to verify that $\lambda_{i\sim j}$ is finite it is sufficient to prove that $m_\theta(G\setminus \{i,j\})\leq m_\theta(G\setminus c)$ for every path $c$ in $[i\rightarrow j]$. Consider a path $c:i=i_1\to i_k=j$ between $i$ and $j$, and let $\overline{c}:i=i_1\to i_{k-1}$ be the path in $G\setminus j$ obtained from $c$.
	
If $i$ is in $0_{\theta,G\setminus j}$, then, by Lemma~\ref{entrylessthan1}, it holds, $m_\theta(G\setminus j)-m_\theta(G\setminus (\{j\}\sqcup\overline{c}))\leq 1\implies$

\[m_\theta(G\setminus \{j,i\})=m_\theta(G\setminus j)-1\leq m_\theta(G\setminus (\{j\}\sqcup\overline{c}))=m_\theta(G\setminus c)\implies\]

\[m_\theta(G\setminus \{i,j\})\leq m_\theta(G\setminus c).\]
	
Assume now that the vertex $i$ is in $\pm_{\theta,G\setminus j}$. In this case, by Lemma~\ref{entrylessthan1}, the path $\overline{c}:i=i_1\to i_{k-1}$ satisfies $m_\theta(G\setminus j)-m_\theta(G\setminus (\{j\}\sqcup\overline{c}))\leq 0$. This implies that,

\[m_\theta(G\setminus \{j,i\})-m_\theta(G\setminus c)=m_\theta(G\setminus j)-m_\theta(G\setminus (\{j\}\sqcup\overline{c}))\leq 0\implies\]

\[m_\theta(G\setminus \{i,j\})\leq m_\theta(G\setminus c).\] 
\end{proof} 

If $\lambda_{i\sim j}(\theta)$ is equal to zero, then we have the following useful observation.

\begin{pps}\label{relative0} If $\lambda_{i\sim j}(\theta)=0$, then $\alpha_i(G)(\theta)=\alpha_i(G\setminus j)(\theta)$ and $\alpha_j(G)(\theta)=\alpha_j(G\setminus i)(\theta)$.
\end{pps}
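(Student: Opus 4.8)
The plan is to read the result straight off the contraction identity of Lemma \ref{Contraction}. Since $\lambda_{i\sim j}=\lambda_{j\sim i}$ and $i,j$ play symmetric roles there, it suffices to prove the first equality $\alpha_i(G)(\theta)=\alpha_i(G\setminus j)(\theta)$; the second follows by interchanging $i$ and $j$. Lemma \ref{Contraction} writes the difference as $\alpha_i(G)-\alpha_i(G\setminus j)=\lambda_{i\sim j}/\alpha_j(G\setminus i)$, and unfolding the definitions this correction term is the honest rational function $\frac{-\sum_{c\in[i\to j]}\lambda_c\,\mu(G\setminus c)^2}{\mu(G\setminus i,j)\,\mu(G\setminus i)}$. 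So the whole statement reduces to showing that this rational function vanishes at $\theta$ whenever $\lambda_{i\sim j}(\theta)=0$.

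The naive attempt is to observe that $\lambda_{i\sim j}(\theta)=0$ kills the numerator, so the quotient is $0$ as long as $\alpha_j(G\setminus i)(\theta)$ is a nonzero finite value (or $\infty$, where the quotient of a zero by a pole is again $0$). The hard part will be the indeterminate case $\alpha_j(G\setminus i)(\theta)=0$, i.e.\ $j\in 0_{\theta,G\setminus i}$, where both $\lambda_{i\sim j}$ and the factor $\alpha_j(G\setminus i)$ vanish at $\theta$ and one is faced with a genuine $0/0$ expression.

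To treat every case uniformly I would argue with multiplicities $m_\theta(\cdot)$ instead of with limits. The structural input is that $-\lambda_{i\sim j}=\sum_{c\in[i\to j]}\lambda_c\big(\mu(G\setminus c)/\mu(G\setminus i,j)\big)^2$ is a sum of squares with nonnegative coefficients, so there can be no cancellation: the hypothesis $\lambda_{i\sim j}(\theta)=0$ forces every individual summand to vanish at $\theta$, whence $m_\theta(G\setminus c)\ge m_\theta(G\setminus i,j)+1$ for each path $c$ with $\lambda_c\neq 0$. Consequently the numerator $-\sum_c\lambda_c\,\mu(G\setminus c)^2$, being a sum of terms each of order at least $2\,m_\theta(G\setminus i,j)+2$ at $\theta$, has multiplicity at least $2\,m_\theta(G\setminus i,j)+2$ there. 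The denominator $\mu(G\setminus i,j)\,\mu(G\setminus i)$ has multiplicity $m_\theta(G\setminus i,j)+m_\theta(G\setminus i)$, which by the one-step interlacing bound $m_\theta(G\setminus i)\le m_\theta(G\setminus i,j)+1$ of Corollary \ref{interlacing} is at most $2\,m_\theta(G\setminus i,j)+1$. Subtracting, the correction term $\lambda_{i\sim j}/\alpha_j(G\setminus i)$ has multiplicity at least $1$ at $\theta$, so it vanishes there and $\alpha_i(G)(\theta)=\alpha_i(G\setminus j)(\theta)$ follows.

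The only obstacle worth flagging is the $0/0$ case above; everything hinges on exploiting that the Christoffel--Darboux numerator is a sum of squares, which rules out cancellation and thereby pins down the order of vanishing, together with the one-step interlacing bound on $m_\theta(G\setminus i)$. The degenerate subcase in which no path $c\in[i\to j]$ has $\lambda_c\neq 0$ (so $i$ and $j$ lie in different components and $\lambda_{i\sim j}\equiv 0$) is immediate, since then $\alpha_i(G)=\alpha_i(G\setminus j)$ identically.
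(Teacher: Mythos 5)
Your proof is correct, but it resolves the crucial degenerate case by a genuinely different mechanism than the paper. Both arguments start from the Contraction Lemma \ref{Contraction} and both observe that the only difficulty is the $0/0$ situation $\alpha_j(G\setminus i)(\theta)=0$. The paper dispatches that case with a soft perturbation argument: it replaces the vertex weight of $j$ by $x-r_j+r$, notes that $\lambda_{i\sim j}$ does not depend on the vertex weights of $i$ and $j$ (so the hypothesis $\lambda_{i\sim j}(\theta)=0$ survives the perturbation while $\alpha_j(G_r\setminus i)(\theta)=r\neq 0$), applies the easy case to $G_r$, and then lets $r\to 0$ using that $r\mapsto\alpha_i(G_r)(\theta)$ is a rational function of $r$ that is constant for $r\neq 0$. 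You instead make a direct order-of-vanishing computation: the Christoffel--Darboux numerator $\sum_c\lambda_c\,\mu(G\setminus c)^2$ is a sum of squares with nonnegative coefficients, so there is no cancellation and its multiplicity at $\theta$ is exactly $2\min_c m_\theta(G\setminus c)$; the hypothesis $\lambda_{i\sim j}(\theta)=0$ then forces $m_\theta(G\setminus c)\ge m_\theta(G\setminus i,j)+1$ for every contributing path, while one-step interlacing (Corollary \ref{interlacing}) caps the denominator $\mu(G\setminus i,j)\mu(G\setminus i)$ at multiplicity $2m_\theta(G\setminus i,j)+1$, so the difference $\alpha_i(G)-\alpha_i(G\setminus j)$ has a zero of order at least one at $\theta$. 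This is quantitative where the paper is soft: it treats all cases uniformly (including, with one extra sentence you should add, the case where $\alpha_i(G\setminus j)$ has a pole at $\theta$, where vanishing of the difference still forces both values to be $\infty$ simultaneously), and it extracts the stronger structural fact that every individual path term must vanish to higher order. The paper's perturbation trick, on the other hand, is lighter and is a template reused later (e.g.\ in Proposition \ref{relative2} and Theorem \ref{stability}), so it integrates more economically into the overall development; your argument is self-contained and avoids any limiting procedure, at the cost of the bookkeeping with multiplicities.
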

\begin{proof} The Contraction Lemma~\ref{Contraction} implies that $\alpha_i(G)(\theta)=\alpha_i(G\setminus j)(\theta)+\dfrac{\lambda_{i\sim j}(\theta)}{\alpha_j(G\setminus i)(\theta)}$ and $\alpha_j(G)(\theta)=\alpha_j(G\setminus i)(\theta)+\dfrac{\lambda_{j\sim i}(\theta)}{\alpha_i(G\setminus j)(\theta)}$. Observe that, since $-\lambda_{i\sim j}$ is a sum of squares, $\lambda_{i\sim j}$ has double zeros. On the other hand, by Proposition~\ref{derivative1}, $\alpha_i(G\setminus j)$ and $\alpha_j(G\setminus i)$ have simple zeros. It follows that, if $\lambda_{i\sim j}(\theta)=0$, then both $\dfrac{\lambda_{i\sim j}}{\alpha_j(G\setminus i)}(\theta)$ and $\dfrac{\lambda_{i\sim j}}{\alpha_i(G\setminus j)}(\theta)$ are equal to $0$, which in turn implies that $\alpha_i(G)(\theta)=\alpha_i(G\setminus j)(\theta)$ and $\alpha_j(G)(\theta)=\alpha_j(G\setminus i)(\theta)$.
\end{proof}

Now, if $\lambda_{i\sim j}(\theta)$ is in $(-\infty, 0)$, then we have more cases.

\begin{pps}\label{relative1} Consider $\lambda_{i\sim j}(\theta)\in (-\infty,0)$. In this case:
\begin{enumerate}[label=(\alph*)]	
	\item if $i\in +_{\theta,G\setminus j}$ and $j\in +_{\theta,G\setminus i}$, or $i\in -_{\theta,G\setminus j}$ and $j\in -_{\theta,G\setminus i}$, then $i$ and $j$ are simultaneously in either one of $-_{\theta,G}$, $0_{\theta,G}$ or $+_{\theta,G}$;
	\item if $i\in +_{\theta,G\setminus j}$ and $j\in -_{\theta,G\setminus i}$, then $i\in +_{\theta,G}$ and $j\in -_{\theta,G}$;
	\item if $i\in 0_{\theta,G\setminus j}$ and $j\in 0_{\theta,G\setminus i}$, then $i,j\in\infty_{\theta,G}$;
	\item if $i\in 0_{\theta,G\setminus j}$ and $j\in +_{\theta,G\setminus i}$, then $i\in -_{\theta,G}$ and $j\in \infty_{\theta,G}$;
	\item if $i\in 0_{\theta,G\setminus j}$ and $j\in -_{\theta,G\setminus i}$, then $i\in +_{\theta,G}$ and $j\in \infty_{\theta,G}$;
	\item if $i\in \infty_{\theta,G\setminus j}$, then $\alpha_j(G\setminus i)(\theta)=\alpha_j(G)(\theta)$ and $i\in \infty_{\theta,G}$.
\end{enumerate}
\end{pps}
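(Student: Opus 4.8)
The plan is to reduce every case to the Contraction Lemma \ref{Contraction}. Fix the value $\theta$ and abbreviate $\lambda:=\lambda_{i\sim j}(\theta)$, which by hypothesis lies in $(-\infty,0)$, together with $a:=\alpha_i(G\setminus j)(\theta)$ and $b:=\alpha_j(G\setminus i)(\theta)$, both regarded as elements of $\mathbb{R}\cup\{\infty\}$; the four classes simply record whether these numbers are negative, zero, positive, or a pole. Read as an identity of rational functions in $x$ and then evaluated at $\theta$, Lemma \ref{Contraction} gives $\alpha_i(G)(\theta)=a+\lambda/b$ and $\alpha_j(G)(\theta)=b+\lambda/a$; multiplying out, it also yields the symmetric product form $\alpha_i(G)(\theta)\cdot b=\alpha_j(G)(\theta)\cdot a=ab+\lambda$, which is just the Christoffel--Darboux quantity $\tfrac{\mu(G)}{\mu(G\setminus i,j)}(\theta)$. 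Evaluating the identity summand by summand is legitimate except where two summands blow up simultaneously, a situation I isolate at the end.

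For the finite cases I read off signs. In case $(a)$ both $a$ and $b$ are finite and share a sign, so from $\alpha_i(G)(\theta)=(ab+\lambda)/b$ and $\alpha_j(G)(\theta)=(ab+\lambda)/a$ the two values have the common sign $\operatorname{sign}(ab+\lambda)\cdot\operatorname{sign}(a)=\operatorname{sign}(ab+\lambda)\cdot\operatorname{sign}(b)$; hence $i$ and $j$ fall simultaneously into $-_{\theta,G}$, $0_{\theta,G}$, or $+_{\theta,G}$ according as $ab+\lambda$ is negative, zero, or positive. In case $(b)$ we have $a>0$ and $b<0$, so in $\alpha_i(G)(\theta)=a+\lambda/b$ both terms are positive (using $\lambda<0$) while in $\alpha_j(G)(\theta)=b+\lambda/a$ both are negative, giving $i\in+_{\theta,G}$ and $j\in-_{\theta,G}$.

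Cases $(c)$, $(d)$, $(e)$ all have $a=0$. Then $\lambda/a$ is a genuine pole (as $\lambda\neq0$) while the competing summand $b$ is finite, so $\alpha_j(G)(\theta)=\infty$ with no cancellation, placing $j\in\infty_{\theta,G}$ in $(d)$ and $(e)$; and $\alpha_i(G)(\theta)=\lambda/b$, whose sign is opposite to that of $b$ when $b$ is a nonzero real, yielding $i\in-_{\theta,G}$ for $b>0$ (case $(d)$) and $i\in+_{\theta,G}$ for $b<0$ (case $(e)$). When in addition $b=0$ (case $(c)$), both $\alpha_i(G)(\theta)$ and $\alpha_j(G)(\theta)$ reduce to $\lambda/0$, hence $i,j\in\infty_{\theta,G}$.

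The one genuine obstacle is case $(f)$, where $a=\infty$. In the identity $\alpha_j(G)=\alpha_j(G\setminus i)+\lambda_{i\sim j}/\alpha_i(G\setminus j)$ the last summand vanishes at $\theta$, because $\alpha_i(G\setminus j)$ has a pole there while $\lambda_{i\sim j}(\theta)$ is finite; hence $\alpha_j(G)(\theta)=\alpha_j(G\setminus i)(\theta)$, which is the first assertion. For the second assertion, $\alpha_i(G)=\alpha_i(G\setminus j)+\lambda_{i\sim j}/\alpha_j(G\setminus i)$ has a pole at $\theta$ immediately when $b\neq0$; the delicate subcase is $b=0$, where both summands have a simple pole at $\theta$ and could in principle cancel. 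Here I would invoke Corollary \ref{derivative1}: since each graph continued fraction is increasing on its branches, its leading Laurent coefficient at every pole is negative, and its derivative at every (necessarily simple) zero is at least $1$. Thus $\alpha_i(G\setminus j)$ has negative leading Laurent coefficient at $\theta$, and $\lambda_{i\sim j}/\alpha_j(G\setminus i)$ behaves like $\lambda/(s(x-\theta))$ with $s\geq1$ the slope of $\alpha_j(G\setminus i)$ at its zero, so its leading coefficient $\lambda/s$ is also negative. The two poles therefore reinforce rather than cancel, whence $\alpha_i(G)(\theta)=\infty$ and $i\in\infty_{\theta,G}$. I expect this monotonicity-driven non-cancellation to be the only step that is not purely formal.
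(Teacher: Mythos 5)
Your proposal is correct and takes essentially the same route as the paper: the paper's proof also consists of evaluating the Contraction Lemma \ref{Contraction} identities at $\theta$ and reading off signs, writing out only the first half of case $(a)$ and declaring "the other cases analogous." The one place where you add genuine content is the subcase of $(f)$ with $\alpha_j(G\setminus i)(\theta)=0$, where the two simple poles could in principle cancel; your observation that Corollary \ref{derivative1} (monotonicity on branches) forces both leading Laurent coefficients to be negative, so the poles reinforce, correctly settles a point that the paper's "analogous" silently glosses over.
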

\begin{proof} Consider $i\in +_{\theta,G\setminus j}$ and $j\in +_{\theta,G\setminus i}$, the other cases being analogous. The Contraction Lemma~\ref{Contraction} implies that $\alpha_i(G)(\theta)=\alpha_i(G\setminus j)(\theta)+\dfrac{\lambda_{i\sim j}(\theta)}{\alpha_j(G\setminus i)(\theta)}$ and $\alpha_j(G)(\theta)=\alpha_j(G\setminus i)(\theta)+\dfrac{\lambda_{j\sim i}(\theta)}{\alpha_i(G\setminus j)(\theta)}$. It then follows that $\alpha_i(G)(\theta)$ and $\alpha_j(G)(\theta)$ are both finite and have the same sign. This shows that $i$ and $j$ are simultaneously in either one of  $-_{\theta,G}$, $0_{\theta,G}$ or $+_{\theta,G}$. 
\end{proof}

The next result shows that we can indeed obtain inequalities in the items $(a)$ and $(b)$ of the Proposition~\ref{relative1}.

\begin{pps}\label{relative3} Let $\lambda_{i\sim j}(\theta)\in (-\infty,0)$. In this case:
\begin{enumerate}[label=(\alph*)]
	\item if $i\in +_{\theta,G\setminus j}\cap +_{\theta,G}$ and $j\in +_{\theta,G\setminus i}\cap +_{\theta,G}$, then $0<\alpha_i(G)(\theta)<\alpha_i(G\setminus j)(\theta)$ and $0<\alpha_j(G)(\theta)<\alpha_j(G\setminus i)(\theta)$;
	\item if $i\in -_{\theta,G\setminus j}\cap -_{\theta,G}$ and $j\in -_{\theta,G\setminus i}\cap -_{\theta,G}$, then $\alpha_i(G\setminus j)(\theta)<\alpha_i(G)(\theta)<0$ and $\alpha_j(G\setminus i)(\theta)<\alpha_j(G)(\theta)<0$;
	\item if $i\in +_{\theta,G\setminus j}\cap +_{\theta,G}$ and $j\in -_{\theta,G\setminus i}\cap -_{\theta,G}$, then $\alpha_i(G\setminus j)(\theta)<\alpha_i(G)(\theta)$ and $\alpha_j(G)(\theta)<\alpha_j(G\setminus i)(\theta)$.
\end{enumerate}
\end{pps}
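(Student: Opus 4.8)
The plan is to read all three items straight off the Contraction Lemma \ref{Contraction}, exploiting that the hypothesis $\lambda_{i\sim j}(\theta)\in(-\infty,0)$ makes every term in the contraction identity a finite real number of known sign. First I would abbreviate $a:=\alpha_i(G\setminus j)(\theta)$, $b:=\alpha_j(G\setminus i)(\theta)$ and $\lambda:=\lambda_{i\sim j}(\theta)$. In all three items $i$ lies in $\pm_{\theta,G\setminus j}$ and $j$ lies in $\pm_{\theta,G\setminus i}$, so membership in $\pm_{\theta,\cdot}$ excludes both $0$ and $\infty$; hence $a$ and $b$ are finite and nonzero, and by hypothesis $\lambda$ is finite and strictly negative. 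Lemma \ref{Contraction} then yields the two clean identities $\alpha_i(G)(\theta)=a+\lambda/b$ and $\alpha_j(G)(\theta)=b+\lambda/a$, in which no quantity is $0$ or $\infty$, so the correction terms $\lambda/b$ and $\lambda/a$ are genuine finite reals.

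The core of the argument is then a sign inspection of these correction terms. In case $(a)$ we have $a,b>0$, so $\lambda/b<0$ and $\lambda/a<0$; therefore $\alpha_i(G)(\theta)=a+\lambda/b<a$ and $\alpha_j(G)(\theta)=b+\lambda/a<b$, and combining these strict upper bounds with the hypotheses $i,j\in+_{\theta,G}$ (which force both left-hand sides to be positive) gives exactly $0<\alpha_i(G)(\theta)<\alpha_i(G\setminus j)(\theta)$ and $0<\alpha_j(G)(\theta)<\alpha_j(G\setminus i)(\theta)$. In case $(b)$ we have $a,b<0$, so now the signs reverse: $\lambda/b>0$ and $\lambda/a>0$ give $\alpha_i(G)(\theta)>a$ and $\alpha_j(G)(\theta)>b$, which together with $i,j\in-_{\theta,G}$ produce the claimed two-sided bounds $\alpha_i(G\setminus j)(\theta)<\alpha_i(G)(\theta)<0$ and $\alpha_j(G\setminus i)(\theta)<\alpha_j(G)(\theta)<0$.

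Case $(c)$ is the mixed one, with $a>0$ and $b<0$. Here $\lambda/b>0$ forces $\alpha_i(G)(\theta)=a+\lambda/b>a=\alpha_i(G\setminus j)(\theta)$, while $\lambda/a<0$ forces $\alpha_j(G)(\theta)=b+\lambda/a<b=\alpha_j(G\setminus i)(\theta)$, which are precisely the two asserted inequalities (the hypotheses $i\in+_{\theta,G}$ and $j\in-_{\theta,G}$ only pin down the signs of the left-hand sides and are consistent with these bounds). I do not expect a genuine obstacle: once Lemma \ref{Contraction} is available, every item collapses to the remark that dividing the negative number $\lambda$ by a finite nonzero real of known sign yields a correction term of predictable sign. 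The only point deserving a line of care is the verification that $a$, $b$, and $\lambda$ are all finite and nonvanishing, so that the contraction identities hold as literal equalities rather than limiting statements; but this is exactly what the case hypotheses guarantee, since $\pm_{\theta,\cdot}$ rules out both $0$ and $\infty$ and $\lambda_{i\sim j}(\theta)\in(-\infty,0)$ rules out $-\infty$.
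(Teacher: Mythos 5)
Your proposal is correct and follows essentially the same route as the paper: the paper's proof invokes the Contraction Lemma \ref{Contraction} and proceeds by the same sign inspection of the identities $\alpha_i(G)(\theta)=\alpha_i(G\setminus j)(\theta)+\lambda_{i\sim j}(\theta)/\alpha_j(G\setminus i)(\theta)$ and its symmetric counterpart, exactly as in Proposition \ref{relative1}. Your explicit verification that $a$, $b$, and $\lambda$ are finite and nonzero (so the contraction identity holds pointwise at $\theta$) is the only detail the paper leaves implicit, and it is handled correctly.
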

\begin{proof} The proof uses the Contraction Lemma~\ref{Contraction} and is analogous to that of Proposition~\ref{relative1}.
\end{proof}

Using Proposition~\ref{relative1} we can also get further conditions that guarantee that $\lambda_{i\sim j}(\theta)$ is finite.

\begin{pps}\label{relative2} If $\lambda_{i\sim j}(\theta)=-\infty$, then $i$ and $j$ are simultaneously in either one of $-_{\theta,G}$, $0_{\theta,G}$, $+_{\theta,G}$ or $\infty_{\theta,G}$.
\end{pps}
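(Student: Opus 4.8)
The plan is to reduce the infinite case to the finite case already handled in Proposition \ref{relative1} by perturbing the time parameter, using the fact that the type of a vertex at $\theta$ is completely read off from the signs immediately to the left and right of $\theta$. Recall from Corollary \ref{derivative1} that every $\alpha_k(G)$ is increasing with simple zeros and poles, so as $x$ crosses $\theta$ its sign changes in the cyclic order $-\to 0\to +\to\infty\to -$. Hence the type of a vertex $k$ at $\theta$ (which of $-_{\theta,G},0_{\theta,G},+_{\theta,G},\infty_{\theta,G}$ it lies in) is \emph{uniquely determined} by the pair of signs $\bigl(\operatorname{sign}\alpha_k(G)(\theta-\epsilon),\operatorname{sign}\alpha_k(G)(\theta+\epsilon)\bigr)$ for small $\epsilon>0$: the four possibilities $(+,+),(-,-),(-,+),(+,-)$ correspond respectively to types $+,-,0,\infty$. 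Thus it suffices to show that $\alpha_i(G)$ and $\alpha_j(G)$ have the same sign at $\theta-\epsilon$ and also the same sign at $\theta+\epsilon$.

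First I would record two facts valid for all small $\epsilon>0$. By Proposition \ref{finitelambda}, the hypothesis $\lambda_{i\sim j}(\theta)=-\infty$ forces $i\in\infty_{\theta,G\setminus j}$ and $j\in\infty_{\theta,G\setminus i}$, so $\alpha_i(G\setminus j)$ and $\alpha_j(G\setminus i)$ each have a simple pole at $\theta$; by the cyclic sign rule this gives $\alpha_i(G\setminus j)(\theta-\epsilon)>0$, $\alpha_i(G\setminus j)(\theta+\epsilon)<0$, and likewise for $\alpha_j(G\setminus i)$. Moreover, since $-\lambda_{i\sim j}$ is a ratio of non-negative functions having $\theta$ as a pole, $\lambda_{i\sim j}$ is finite and strictly negative throughout a punctured neighborhood of $\theta$, so $\lambda_{i\sim j}(\theta\pm\epsilon)\in(-\infty,0)$. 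Because every rational function in play has only finitely many zeros and poles near $\theta$, I can fix one $\epsilon>0$ small enough that none of $\alpha_i(G)$, $\alpha_j(G)$, $\alpha_i(G\setminus j)$, $\alpha_j(G\setminus i)$, $\lambda_{i\sim j}$ vanishes or blows up on $(\theta-\epsilon,\theta)\cup(\theta,\theta+\epsilon)$.

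Now I apply Proposition \ref{relative1}(a) at the two perturbed times. At $\theta-\epsilon$ we are in the case $i\in+_{\theta-\epsilon,G\setminus j}$, $j\in+_{\theta-\epsilon,G\setminus i}$ with $\lambda_{i\sim j}(\theta-\epsilon)\in(-\infty,0)$, so $i$ and $j$ lie simultaneously in one of $-,0,+$; by the choice of $\epsilon$ neither is at a zero, so $\alpha_i(G)(\theta-\epsilon)$ and $\alpha_j(G)(\theta-\epsilon)$ share the same nonzero sign. At $\theta+\epsilon$ we are instead in the case $i\in-_{\theta+\epsilon,G\setminus j}$, $j\in-_{\theta+\epsilon,G\setminus i}$, and the same part of Proposition \ref{relative1} again yields that $\alpha_i(G)(\theta+\epsilon)$ and $\alpha_j(G)(\theta+\epsilon)$ share the same sign. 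Combining, the left--right sign pairs of $\alpha_i(G)$ and $\alpha_j(G)$ at $\theta$ coincide, so by the first paragraph $i$ and $j$ have the same type at $\theta$, which is the assertion.

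The main obstacle, and the only place where genuine care is needed, is the transversality bookkeeping of the second paragraph: one must guarantee that a single $\epsilon$ simultaneously keeps $\lambda_{i\sim j}$ in $(-\infty,0)$, places $\alpha_i(G\setminus j)$ and $\alpha_j(G\setminus i)$ on the correct branches of their poles, and keeps $\alpha_i(G)$, $\alpha_j(G)$ away from their zeros, so that ``simultaneously in $-,0,+$'' collapses to ``same sign''. This is precisely where the simplicity of all zeros and poles from Corollary \ref{derivative1} is essential; without it the sign on one side of $\theta$ would fail to pin down membership in a single class.
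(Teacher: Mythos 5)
Your proof is correct and follows essentially the same route as the paper's: apply Proposition \ref{finitelambda} to place $i$ and $j$ on the pole branches of $\alpha_i(G\setminus j)$ and $\alpha_j(G\setminus i)$, invoke Proposition \ref{relative1}(a) at the perturbed times $\theta\pm\epsilon$, and conclude that $\alpha_i(G)$ and $\alpha_j(G)$ share left and right signs at $\theta$, hence the same type. Your write-up is in fact slightly more careful than the paper's in two spots it leaves implicit: verifying that $\lambda_{i\sim j}(\theta\pm\epsilon)$ lies in $(-\infty,0)$ (not merely $\neq-\infty$), and spelling out that the sign pair $\bigl(\operatorname{sign}\alpha_k(G)(\theta-\epsilon),\operatorname{sign}\alpha_k(G)(\theta+\epsilon)\bigr)$ determines membership in $-_{\theta,G}$, $0_{\theta,G}$, $+_{\theta,G}$ or $\infty_{\theta,G}$.
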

\begin{proof} If $\lambda_{i\sim j}(\theta)=-\infty$, then Proposition~\ref{finitelambda} implies $i\in\infty_{\theta,G\setminus i}$ and $j\in\infty_{\theta,G\setminus i}$. It follows that $i\in +_{\theta-\epsilon,G\setminus j}\cap -_{\theta+\epsilon,G\setminus j}$ and $j\in +_{\theta-\epsilon,G\setminus i}\cap -_{\theta+\epsilon,G\setminus i}$ for every $\epsilon>0$ sufficiently small. Since $\lambda_{i\sim j}(\theta-\epsilon)\neq -\infty$, $i\in +_{\theta-\epsilon,G\setminus j}$ and $j\in +_{\theta-\epsilon,G\setminus i}$, the item $(a)$ of Proposition~\ref{relative1} implies that $\alpha_i(G)(\theta-\epsilon)$ and $\alpha_j(G)(\theta-\epsilon)$ have the same sign for every $\epsilon>0$ small. Similarly, $\alpha_i(G)(\theta+\epsilon)$ and $\alpha_j(G)(\theta+\epsilon)$ have the same sign for every $\epsilon>0$ small. As a consequence, $\alpha_i(G)(\theta)$ and $\alpha_j(G)(\theta)$ have the same sign, which finishes the proof.
\end{proof}

The content of Propositions~\ref{finitelambda},~\ref{relative0},~\ref{relative1} and~\ref{relative2} is summarized in the Figure~\ref{table}.

\begin{figure}[h]
	\includegraphics[width=\linewidth]{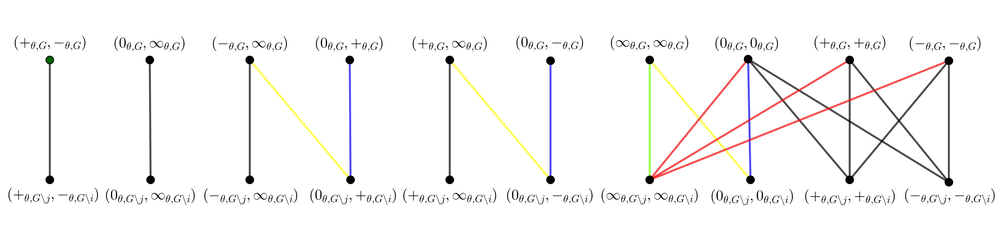}
	\caption{The nodes represent possible signs for the pair of distinct vertices $(i,j)$, both in $G$ and in $G\setminus j$ and $G\setminus i$. The edges join signs configurations that can occur simultaneously. The green, black and yellow edges represent $\lambda_{i\sim j}$ in $[-\infty,0]$, $(-\infty,0]$ and $(-\infty,0)$, respectively. The red and blue edges represent $\lambda_{i\sim j}$ equal to $-\infty$ and $0$, respectively.}
	\label{table}
\end{figure}

The next example illustrates how Figure~\ref{table} summarizes the content of the last propositions.

\begin{exm} Assume that, $i\in 0_{\theta, G}$ and $j\in +_{\theta, G}$. By Proposition~\ref{relative2}, we have that $\lambda_{i\sim j}(\theta)$ is finite. Now, note that, by Proposition~\ref{relative1}, $\lambda_{i\sim j}(\theta)$ cannot be in $(-\infty, 0)$. Indeed, Proposition~\ref{relative1} shows that there is no way to obtain $i\in 0_{\theta, G}$ and $j\in +_{\theta, G}$ if $\lambda_{i\sim j}(\theta)$ is in $(-\infty, 0)$. It follows that $\lambda_{i\sim j}(\theta)$ is equal to zero. But then, by Proposition~\ref{relative0}, we obtain that $i\in 0_{\theta, G\setminus j}$ and $j\in +_{\theta, G\setminus i}$.

This sequence of deductions is summarized in Figure~\ref{table} as follows. The information that $i\in 0_{\theta, G}$ and $j\in +_{\theta, G}$ is represented by the vertex with label $(0_{\theta, G}, +_{\theta, G})$ in Figure~\ref{table}. The fact that $i\in 0_{\theta, G\setminus j}$ and $j\in +_{\theta, G\setminus i}$ is represented by the vertex with label $(0_{\theta, G\setminus j}, +_{\theta, G\setminus i})$ in Figure~\ref{table}. Observe in Figure~\ref{table} that the only neighbor of the vertex with label $(0_{\theta, G}, +_{\theta, G})$ is the vertex with label $(0_{\theta, G\setminus j}, +_{\theta, G\setminus i})$. This means that if $i\in 0_{\theta, G}$ and $j\in +_{\theta, G}$, then the only possibility is that $i\in 0_{\theta, G\setminus j}$ and $j\in +_{\theta, G\setminus i}$. Furthermore, the fact that the edge connecting the vertices with labels $(0_{\theta, G}, +_{\theta, G})$ and $(0_{\theta, G\setminus j}, +_{\theta, G\setminus i})$ in Figure~\ref{table} is blue means that in this described situation $\lambda_{i\sim j}(\theta)$ must be equal to zero.
\end{exm}

The {\it frontier} of a subset of vertices $A$ of $[n]$, denoted by $\partial A$, is defined as the set of vertices that are not in $A$ but have a neighbor in $A$.

\begin{pps}\label{frontier} The frontier $\partial 0_{\theta,G}$ is a subset of $\infty_{\theta,G}$.
\end{pps}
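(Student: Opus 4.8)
The plan is to reduce the statement to the sign calculus already developed in Propositions \ref{relative0}, \ref{relative1} and \ref{relative2}. Fix a vertex $i \in \partial 0_{\theta,G}$; by definition $i \notin 0_{\theta,G}$ and $i$ has a neighbour $j \in 0_{\theta,G}$. The whole argument compares the signs of the graph continued fractions at $i$ and $j$ in $G$ with those in $G\setminus j$ and $G\setminus i$, using the Contraction Lemma \ref{Contraction} as packaged in those propositions, and concludes that $i \in \infty_{\theta,G}$.

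First I would exploit adjacency to pin down the sign of $\lambda_{i\sim j}(\theta)$. Since $i$ and $j$ are neighbours, $\lambda_{ij}\neq 0$, so $\lambda_{ij}<0$. In the formula for $\lambda_{i\sim j}$ the length-one path $ij$ contributes exactly the constant $\lambda_{ij}$, while every remaining path contributes a non-positive term, so $\lambda_{i\sim j}(\theta)\le \lambda_{ij}<0$, that is $\lambda_{i\sim j}(\theta)\in[-\infty,0)$. This is the crucial use of the frontier hypothesis: it rules out the value $\lambda_{i\sim j}(\theta)=0$, which is precisely the degenerate case of Proposition \ref{relative0} in which $G$ and $G\setminus j$ cannot be compared.

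Next I would split into the two remaining regimes. If $\lambda_{i\sim j}(\theta)=-\infty$, Proposition \ref{relative2} forces $i$ and $j$ into the same class among $-_{\theta,G},0_{\theta,G},+_{\theta,G},\infty_{\theta,G}$; since $j\in 0_{\theta,G}$ this would give $i\in 0_{\theta,G}$, contradicting $i\in\partial 0_{\theta,G}$, so this regime does not occur. Otherwise $\lambda_{i\sim j}(\theta)\in(-\infty,0)$ and Proposition \ref{relative1} applies. Scanning its items for those whose conclusion places $j$ in $0_{\theta,G}$, one finds only item $(a)$, which would again force $i\in 0_{\theta,G}$, and item $(f)$ with $i\in\infty_{\theta,G\setminus j}$, whose conclusion is exactly $i\in\infty_{\theta,G}$. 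As $i\notin 0_{\theta,G}$, only the latter survives, so $i\in\infty_{\theta,G}$, as desired.

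I expect the main obstacle to be the bookkeeping in this last step: checking that the items of Proposition \ref{relative1} exhaust the sixteen possible sign pairs for $i$ in $G\setminus j$ and $j$ in $G\setminus i$, and that the requirement $j\in 0_{\theta,G}$ isolates exactly the two items named above. An essentially equivalent route would go through Lemma \ref{internal}: to prove $i\in\infty_{\theta,G}$ it suffices to exhibit a neighbour of $i$ in $0_{\theta,G\setminus i}$, and the same case analysis shows that $j$ itself works, since item $(f)$ yields $\alpha_j(G\setminus i)(\theta)=\alpha_j(G)(\theta)=0$, i.e.\ $j\in 0_{\theta,G\setminus i}$.
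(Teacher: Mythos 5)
Your proof is correct and takes essentially the same route as the paper: the paper's proof simply notes that adjacency forces $\lambda_{i\sim j}(\theta)\neq 0$ and then reads off from Figure \ref{table} (the summary of Propositions \ref{relative0}, \ref{relative1}, \ref{relative3} and \ref{relative2}) that $i$ cannot lie in $\pm_{\theta,G}$, hence lies in $\infty_{\theta,G}$ --- which is exactly your case analysis carried out by hand on those propositions. Your explicit justification that the length-one path makes $\lambda_{i\sim j}(\theta)\le\lambda_{ij}<0$ is a detail the paper leaves implicit.
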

\begin{proof} Let $i$ be in $\partial 0_{\theta,G}$ with a neighbor $j$ in $0_{\theta,G}$. Since $i$ and $j$ are neighbors, $\lambda_{i\sim j}(\theta)$ is non-zero. By Figure~\ref{table} this implies that $i$ cannot be in $\pm_{\theta,G}$, so it must be in $\infty_{\theta,G}$.
\end{proof}

The next result is simply a reformulation of Proposition~\ref{frontier}.

\begin{cor} Let $i$ and $j$ be neighbors in the graph $G$. If $\alpha_i(G)$ changes sign from $-$ to $+$ at time $\theta$, then $\alpha_j(G)$ changes sign from $-$ to $+$ or from $+$ to $-$ at time $\theta$. 
\end{cor}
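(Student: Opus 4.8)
The plan is to recognize this corollary as the purely dynamical restatement of the static inclusion $\partial 0_{\theta,G}\subseteq\infty_{\theta,G}$ from Proposition \ref{frontier}, so that no new computation is required. First I would translate both hypothesis and conclusion into the sign-set language introduced before Lemma \ref{zeroset}. By the sign-change characterization recorded after Lemma \ref{internal}, the hypothesis that $\alpha_i(G)$ changes sign from $-$ to $+$ at $\theta$ says exactly $i\in 0_{\theta,G}$. Likewise the two admissible conclusions correspond to $j\in 0_{\theta,G}$ (change from $-$ to $+$) and $j\in\infty_{\theta,G}$ (change from $+$ to $-$), the latter because $\alpha_j(G)$ is increasing in each of its branches by Corollary \ref{derivative1} and hence runs from $+\infty$ down to $-\infty$ across each simple pole. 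Thus it suffices to prove $j\in 0_{\theta,G}\cup\infty_{\theta,G}$.

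Next I would split on whether $j$ itself lies in $0_{\theta,G}$. If $j\in 0_{\theta,G}$, then $\alpha_j(G)$ changes sign from $-$ to $+$ and we are done. Otherwise $j\notin 0_{\theta,G}$; since $j$ is a neighbor of $i$ and $i\in 0_{\theta,G}$, the vertex $j$ satisfies the definition of the frontier, namely it lies outside $0_{\theta,G}$ yet has the neighbor $i$ inside it, so $j\in\partial 0_{\theta,G}$. Proposition \ref{frontier} then forces $j\in\infty_{\theta,G}$, i.e. $\alpha_j(G)$ changes sign from $+$ to $-$ at $\theta$. In either branch of the dichotomy $\alpha_j(G)$ changes sign at $\theta$, which is the assertion.

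Because the whole argument is a translation followed by a one-line case split, there is essentially no obstacle; the only point to watch is that the neighbor hypothesis feeds in exactly through the frontier step of the second case. It is worth noting only in passing that for neighbors one has $\lambda_{i\sim j}(\theta)\neq 0$, the direct edge contributing the strictly positive term $(-\lambda_{ij})\mu(G\setminus i,j)^2$ to the numerator of $-\lambda_{i\sim j}$; but this is precisely the input already underlying Proposition \ref{frontier} and need not be reproved. The genuinely delicate facts — simplicity of the zeros and poles and the monotone cycling $-\to 0\to+\to\infty\to-$ that validates the sign-change dictionary — are secured by Corollaries \ref{derivative1} and \ref{interlacing}, so I would simply cite them.
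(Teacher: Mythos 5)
Your proof is correct and follows exactly the paper's route: translate the sign changes into the statements $i\in 0_{\theta,G}$ and $j\in 0_{\theta,G}\sqcup\infty_{\theta,G}$ via the dictionary established after Lemma \ref{internal}, then apply Proposition \ref{frontier} to handle the case $j\notin 0_{\theta,G}$. The only difference is expository: you spell out the case split and the monotonicity justification from Corollary \ref{derivative1}, which the paper leaves implicit in its two-line proof.
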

\begin{proof} If $\alpha_i(G)$ changes sign from $-$ to $+$ at time $\theta$, then $i\in 0_{\theta,G}$. This implies by Proposition~\ref{frontier} that $j\in 0_{\theta,G}\sqcup \infty_{\theta,G}$, from which the result follows.
\end{proof}

Now we are ready to prove our main result, the Stability Lemma~\ref{newstability}.

\begin{pps}\label{preservespm} If $i\in \partial 0_{\theta,G}$ and $j\in\pm_{\theta,G}$ then $\alpha_j(G\setminus i)(\theta)=\alpha_j(G)(\theta)$. As a consequence, $-_{\theta,G\setminus i}=-_{\theta,G}$ and $+_{\theta,G\setminus i}=+_{\theta,G}$.
\end{pps}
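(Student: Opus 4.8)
The plan is to study the pair of vertices $(i,j)$ through the Contraction Lemma \ref{Contraction}, splitting according to the value of $\lambda_{i\sim j}(\theta)$ in $[-\infty,0]$ and reading off the signs with Propositions \ref{finitelambda}--\ref{relative2}. The starting observations are that $i\in\partial 0_{\theta,G}\subseteq\infty_{\theta,G}$ by Proposition \ref{frontier}, and that $i$ has a neighbour $k\in 0_{\theta,G}$, necessarily distinct from $j$ since $j\in\pm_{\theta,G}$. This neighbour is the extra piece of information carried by the hypothesis $i\in\partial 0_{\theta,G}$ beyond $i\in\infty_{\theta,G}$, and it is what the argument will ultimately exploit.

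For the equality $\alpha_j(G\setminus i)(\theta)=\alpha_j(G)(\theta)$ I would dispose of the two extreme values of $\lambda_{i\sim j}(\theta)$ first. The value $-\infty$ cannot occur: by Proposition \ref{relative2} it would place $i$ and $j$ in the same class, contradicting $i\in\infty_{\theta,G}$ and $j\in\pm_{\theta,G}$. If $\lambda_{i\sim j}(\theta)=0$, then Proposition \ref{relative0} gives the equality at once. This leaves $\lambda_{i\sim j}(\theta)\in(-\infty,0)$, which I would handle by reducing to the single claim that $i\in\infty_{\theta,G\setminus j}$; granting this, Proposition \ref{relative1}$(f)$ delivers the equality. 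Scanning the outcomes in Proposition \ref{relative1} under the constraints $i\in\infty_{\theta,G}$ and $j\in\pm_{\theta,G}$, the only way to avoid the conclusion $i\in\infty_{\theta,G\setminus j}$ is the configuration in which $i\in\pm_{\theta,G\setminus j}$ and $j\in 0_{\theta,G\setminus i}$ (items $(d)$ and $(e)$ read with the roles of $i$ and $j$ exchanged).

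The hard part will be to rule out this last configuration, and this is where the neighbour $k$ enters. I expect to need the auxiliary fact that deleting a vertex of $\pm_{\theta,G}$ does not remove a vertex from $0_{\theta,G}$; concretely, $k\in 0_{\theta,G}$ and $j\in\pm_{\theta,G}$ imply $k\in 0_{\theta,G\setminus j}$. I would prove this by the same trichotomy for the pair $(j,k)$: the value $-\infty$ is barred by Proposition \ref{relative2}, the range $(-\infty,0)$ is barred because no outcome of Proposition \ref{relative1} simultaneously yields $k\in 0_{\theta,G}$ and $j\in\pm_{\theta,G}$ (an outcome with $k\in 0_{\theta,G}$ forces $j\in 0_{\theta,G}$ as well), so $\lambda_{j\sim k}(\theta)=0$ and Proposition \ref{relative0} gives $k\in 0_{\theta,G\setminus j}$. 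With this in hand, in the offending configuration $k$ remains a neighbour of $i$ in $G\setminus j$ and lies in $0_{\theta,G\setminus j}$, while $i\in\pm_{\theta,G\setminus j}$ means $i\notin 0_{\theta,G\setminus j}$; hence $i\in\partial 0_{\theta,G\setminus j}$ and Proposition \ref{frontier} applied to $G\setminus j$ forces $i\in\infty_{\theta,G\setminus j}$, contradicting $i\in\pm_{\theta,G\setminus j}$. This eliminates the configuration and finishes the first statement.

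For the consequence, the first statement already gives the sign-preserving inclusions $+_{\theta,G}\subseteq +_{\theta,G\setminus i}$ and $-_{\theta,G}\subseteq -_{\theta,G\setminus i}$. To obtain the reverse inclusions I would check that no vertex outside $\pm_{\theta,G}$ can migrate into $\pm_{\theta,G\setminus i}$ when $i$ is removed. Running the trichotomy for $(i,j)$ once more with $i\in\infty_{\theta,G}$, a vertex $j\in 0_{\theta,G}$ is seen to satisfy $j\in 0_{\theta,G\setminus i}$, and a vertex $j\in\infty_{\theta,G}$ to satisfy $j\in 0_{\theta,G\setminus i}\cup\infty_{\theta,G\setminus i}$ (using Proposition \ref{finitelambda} in the case $\lambda_{i\sim j}(\theta)=-\infty$); in both cases $j\notin\pm_{\theta,G\setminus i}$. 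Since every vertex of $G\setminus i$ belongs to exactly one of the four classes of $G$, the classes $+_{\theta,G\setminus i}$ and $-_{\theta,G\setminus i}$ can only be fed by $+_{\theta,G}$ and $-_{\theta,G}$, which yields the claimed equalities.
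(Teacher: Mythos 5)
Your proof is correct and takes essentially the same route as the paper: the decisive step in both is that the neighbour $k\in 0_{\theta,G}$ of $i$ survives in $0_{\theta,G\setminus j}$ when $j\in\pm_{\theta,G}$ is deleted, whence $i\in\partial 0_{\theta,G\setminus j}\subseteq\infty_{\theta,G\setminus j}$ by Proposition \ref{frontier} and item $(f)$ of Proposition \ref{relative1} yields the equality; the paper merely compresses your explicit trichotomies on $\lambda_{i\sim j}(\theta)$ and $\lambda_{j\sim k}(\theta)$ into citations of Figure \ref{table}. One negligible slip in a parenthetical: the outcome $j\in\infty_{\theta,G\setminus k}$ of item $(f)$ also permits $k\in 0_{\theta,G}$ while forcing $j\in\infty_{\theta,G}$ rather than $j\in 0_{\theta,G}$, but since that is still incompatible with $j\in\pm_{\theta,G}$, your conclusion that $\lambda_{j\sim k}(\theta)=0$ is unaffected.
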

\begin{proof}Let $k\in 0_{\theta,G}$ be a neighbor of $i\in\partial 0_{\theta, G}$. By the Figure~\ref{table}, since $j\in\pm_{\theta,G}$, it holds that $k\in 0_{\theta,G\setminus j}$ and $i\not\in 0_{\theta,G\setminus j}$. But the vertices $k$ and $i$ are also neighbors in $G\setminus j$, and from Proposition~\ref{frontier} we have $\partial 0_{\theta,G\setminus j}\subseteq \infty_{\theta,G\setminus j}$, so it must be $i\in\infty_{\theta,G\setminus j}$. Using item $(f)$ of Proposition~\ref{relative1} the result immediately follows.
\end{proof}

\begin{thm}(Stability lemma)\label{stability} If $i\in \partial 0_{\theta,G}$, then $\alpha_j(G\setminus i)(\theta)=\alpha_j(G)(\theta)$ for every $j$ different from $i$. In particular:
\begin{itemize}
	\item $-_{\theta,G\setminus i}=-_{\theta,G}$;
	\item $0_{\theta,G\setminus i}=0_{\theta,G}$;
	\item $+_{\theta,G\setminus i}=+_{\theta,G}$;
	\item $\infty_{\theta,G\setminus i}=\infty_{\theta,G}\setminus i$.
\end{itemize}
\end{thm}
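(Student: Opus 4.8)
The plan is to reduce the four set-equalities to the single functional identity $\alpha_j(G\setminus i)(\theta)=\alpha_j(G)(\theta)$ for every $j\neq i$, which is exactly Theorem \ref{newstability}. First I record that $i\in\partial 0_{\theta,G}\subseteq\infty_{\theta,G}$ by Proposition \ref{frontier}. Granting the identity, each $j\neq i$ has the same sign class in $G$ and in $G\setminus i$, so $-_{\theta,G\setminus i}=-_{\theta,G}$, $0_{\theta,G\setminus i}=0_{\theta,G}$ and $+_{\theta,G\setminus i}=+_{\theta,G}$, while the only vertex whose membership changes is $i$ itself, giving $\infty_{\theta,G\setminus i}=\infty_{\theta,G}\setminus i$. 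Everything thus rests on the identity.

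To prove the identity I would split on $\lambda_{i\sim j}(\theta)$, which lies in $[-\infty,0]$ since $-\lambda_{i\sim j}$ is a sum of squares. If $\lambda_{i\sim j}(\theta)=0$ the identity is Proposition \ref{relative0}; if $\lambda_{i\sim j}(\theta)=-\infty$ then Proposition \ref{finitelambda} gives $j\in\infty_{\theta,G\setminus i}$ and Proposition \ref{relative2} puts $i,j$ in a common sign class of $G$, so $j\in\infty_{\theta,G}$ and both sides are infinite. In the range $\lambda_{i\sim j}(\theta)\in(-\infty,0)$ the six items of Proposition \ref{relative1} are exhaustive over the sixteen possible sign-pairs of $(\alpha_i(G\setminus j),\alpha_j(G\setminus i))$, so I can read off $(\alpha_i(G),\alpha_j(G))$ from them. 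This settles two of the three remaining sign classes of $j$ at once: for $j\in\pm_{\theta,G}$ the identity is Proposition \ref{preservespm}, and for $j\in 0_{\theta,G}$ the pair $(\alpha_i(G),\alpha_j(G))=(\infty,0)$ can only come from item $(f)$, which yields $\alpha_j(G\setminus i)(\theta)=\alpha_j(G)(\theta)=0$; in particular $0_{\theta,G}\subseteq 0_{\theta,G\setminus i}$, and note this uses only $i\in\infty_{\theta,G}$, not the frontier hypothesis.

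The crux is the last sign class, $j\in\infty_{\theta,G}$ with $j\neq i$ and $\lambda_{i\sim j}(\theta)\in(-\infty,0)$, where the pair $(\infty,\infty)$ can arise either from item $(f)$, giving the desired $\alpha_j(G\setminus i)(\theta)=\infty$, or from item $(c)$, which instead forces $\alpha_j(G\setminus i)(\theta)=0$ and $\alpha_i(G\setminus j)(\theta)=0$. Feeding $\alpha_i(G)(\theta)=\infty$ into the Contraction Lemma \ref{Contraction}, written as $\alpha_i(G)=\alpha_i(G\setminus j)+\lambda_{i\sim j}/\alpha_j(G\setminus i)$, already exposes this dichotomy cleanly: with $\lambda_{i\sim j}(\theta)$ finite and nonzero, either $\alpha_i(G\setminus j)(\theta)=\infty$ (case $(f)$) or $\alpha_j(G\setminus i)(\theta)=0$ (case $(c)$). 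So the entire theorem comes down to excluding case $(c)$, i.e. to showing $i\in\infty_{\theta,G\setminus j}$. I expect this to be the main obstacle, for three reasons: it is false for a general $i\in\infty_{\theta,G}$ and so must genuinely consume the hypothesis $i\in\partial 0_{\theta,G}$; the configuration $(c)$ is symmetric in $i$ and $j$ (it asserts $i\in 0_{\theta,G\setminus j}$ and $j\in 0_{\theta,G\setminus i}$ simultaneously); and it cannot be excluded by the pairwise sign table alone, since in the offending subcase even the zero-neighbour $k\in 0_{\theta,G}$ of $i$ — which survives as $k\in 0_{\theta,G\setminus j}$ by the frontier-free inclusion of the previous paragraph applied to the $\infty$-vertex $j$ — may itself leave $0_{\theta,G\setminus i,j}$. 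To finish I would therefore bring in the global structure of $0_{\theta,G}$: the connected component of $k$ is $\theta$-critical with $\theta$ a simple zero (Theorem \ref{kuwong.gallai} together with Theorem \ref{kuwong.gallaiedmonds}$(a)$), a robustness that no single deletion $G\setminus j$ can break, so that $i$ retains a zero-neighbour and stays in $\infty_{\theta,G\setminus j}$; combined with the forward inclusion and the bookkeeping $m_\theta(G\setminus i)=m_\theta(G)+1$, this upgrades $0_{\theta,G}\subseteq 0_{\theta,G\setminus i}$ to equality and closes the argument.
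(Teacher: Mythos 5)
Your reduction is sound and in fact runs parallel to the paper's own proof up to the decisive point: Proposition \ref{frontier} gives $i\in\infty_{\theta,G}$, Propositions \ref{relative0}, \ref{finitelambda}, \ref{relative2} handle $\lambda_{i\sim j}(\theta)\in\{0,-\infty\}$, Proposition \ref{preservespm} handles $j\in\pm_{\theta,G}$, item $(f)$ of Proposition \ref{relative1} handles $j\in 0_{\theta,G}$, and you correctly isolate the crux as excluding configuration $(c)$ for $j\in\infty_{\theta,G}$, i.e.\ showing $0_{\theta,G\setminus i}\cap\infty_{\theta,G}=\emptyset$ --- which is exactly where the paper says the real work lies. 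The gap is in how you exclude it. You invoke Theorem \ref{kuwong.gallai} and Theorem \ref{kuwong.gallaiedmonds}$(a)$, but these are precisely the statements that this paper derives \emph{from} the Stability Lemma \ref{stability} (as Theorem \ref{gallai} and Corollary \ref{multiplicity}), and in Ku and Wong's original work they are likewise consequences of their stability lemma (Theorem \ref{kuwong.stability}), which is the set-equality form of the statement you are trying to prove. So the closing step is circular: any appeal to $\theta$-criticality of the components of $0_{\theta,G}$ presupposes a stability result.

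Moreover, even if one granted those theorems as external black boxes, the sentence ``a robustness that no single deletion $G\setminus j$ can break, so that $i$ retains a zero-neighbour and stays in $\infty_{\theta,G\setminus j}$'' is not an argument. To conclude $i\in\infty_{\theta,G\setminus j}$ from Lemma \ref{internal} you need a neighbour $k$ of $i$ with $k\in 0_{\theta,G\setminus i,j}$; but $\theta$-criticality of the component $C\ni k$ of $0_{\theta,G}$ is a property of the subgraph induced by $C$ alone, and it does not transfer to the sign classes of the ambient graphs $G\setminus j$ or $G\setminus i,j$: how $C$ attaches to the rest of the graph matters, and controlling that attachment under vertex deletion is exactly the content of stability-type lemmas (as you yourself observe, the pairwise table cannot do it). The paper closes this same gap with a self-contained perturbation trick instead: assuming $j\in 0_{\theta,G\setminus i}\cap\infty_{\theta,G}$ (so, by the table, $i\in 0_{\theta,G\setminus j}$), replace the vertex weight of $i$ by $x-r_i+\epsilon$. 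In the perturbed graph $G_\epsilon$ one has $i\in +_{\theta,G_\epsilon\setminus j}\cap\infty_{\theta,G_\epsilon}$; Figure \ref{table} then forces $j\in -_{\theta,G_\epsilon}$ and $k\in 0_{\theta,G_\epsilon}$, and then $k\in 0_{\theta,G_\epsilon\setminus j}$, so that $i\in +_{\theta,G_\epsilon\setminus j}$ is a neighbour of a zero vertex in $G_\epsilon\setminus j$, contradicting Proposition \ref{frontier}. Replacing your final paragraph by an argument of this kind (or any other non-circular exclusion of case $(c)$) is what is needed to make the proof complete.
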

\begin{proof} Consider $i$ in $\partial 0_{\theta,G}$. By Proposition~\ref{preservespm} we need to prove the second and fourth equalities of sets. From Figure~\ref{table} and Proposition~\ref{preservespm} it is clear that $0_{\theta,G}\subseteq 0_{\theta, G\setminus i}$ and $\infty_{\theta,G\setminus i}\subseteq \infty_{\theta,G}$, but it could happen that the intersection $0_{\theta,G\setminus i}\cap\infty_{\theta,G}$ is non-empty.

Assume, by contradiction, that there exists a vertex $j$ in $0_{\theta,G\setminus i}\cap \infty_{\theta,G}$ and let $k\in 0_{\theta,G}$ be a neighbor of $i$.  Consider the graph $G_\epsilon$ obtained from $G$ where the new vertex weight of $i$ is $x-r_i+\epsilon$, with $\epsilon>0$ small. As $i\in 0_{\theta,G\setminus j}\cap\infty_{\theta,G}$ it follows that $i\in +_{\theta,G_\epsilon\setminus j}\cap\infty_{\theta,G_\epsilon}$. By Figure~\ref{table} this implies that $j\in 0_{\theta,G_\epsilon\setminus i}\cap -_{\theta,G_\epsilon}$ and $k\in 0_{\theta, G_\epsilon\setminus i}\cap 0_{\theta,G_\epsilon}$. Since $j\in -_{\theta,G_\epsilon}$ the Figure~\ref{table} implies that $k\in 0_{\theta,G_\epsilon\setminus j}$. Thus, $i\in +_{\theta,G_\epsilon\setminus j}$ is a neighbor of $k\in 0_{\theta,G_\epsilon\setminus j}$ in $G_\epsilon\setminus j$, which is a contradiction by Proposition~\ref{frontier}.
\end{proof}

Recall that a graph $G$ is called {\it $\theta$-critical} if $[n]=0_{\theta,G}$. The $\theta$-critical components of a graph $G$ are the connected components of the induced subgraph in $0_{\theta,G}$. In this context we have the following theorem by Ku and Wong~\cite[p. 3390, Thm. 4.13]{ku2013gallai}.

\begin{thm}(Gallai's lemma analogue by Ku and Wong~\cite{ku2013gallai})\label{gallai} If $G$ is a connected $\theta$-critical graph then $m_\theta(G)=1$.
\end{thm}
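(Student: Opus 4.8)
The goal is to show that $m:=m_\theta(G)$ equals $1$. Since $G$ is $\theta$-critical, every vertex lies in $0_{\theta,G}$, so each $\alpha_j(G)$ has a simple zero at $\theta$ by Corollary \ref{derivative1}. A first, purely local, observation is available for free: the first item of Lemma \ref{derivative0} gives $\mu(G)'/\mu(G)=\sum_j 1/\alpha_j(G)$, and comparing residues at $\theta$ yields the clean identity $m=\sum_{j=1}^n 1/\alpha_j(G)'(\theta)$. As each $\alpha_j(G)'(\theta)\ge 1$, this already bounds $m\le n$; but it ignores connectivity and is far too weak on its own. The plan is therefore to run a strong induction on $n$ (the case $n=1$ being trivial) that genuinely exploits connectedness.

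I would argue by a minimal counterexample. Suppose $m\ge 2$. Call an ordered pair $(v,u)$ of distinct vertices \emph{bad} if $u\in 0_{\theta,G\setminus v}$; note that $v\in 0_{\theta,G}$ holds automatically. Since $m_\theta(G\setminus v)=m-1\ge 1$, Lemma \ref{zeroset} guarantees that $0_{\theta,G\setminus v}$ is nonempty, so bad pairs exist. Moreover, because $v\in 0_{\theta,G}$ forces $v\notin\infty_{\theta,G}$, Lemma \ref{internal} shows that no neighbor of $v$ can lie in $0_{\theta,G\setminus v}$; hence every bad pair satisfies $d_G(v,u)\ge 2$. A direct multiplicity count (using $m_\theta(G\setminus u,v)=m_\theta(G\setminus v,u)$ together with $m_\theta(G\setminus u)=m_\theta(G\setminus v)=m-1$) shows that badness is symmetric. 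I then choose a bad pair $(v,u)$ minimizing $d_G(v,u)=:d\ge 2$.

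Let $y$ be the neighbor of $u$ on a shortest $v$--$u$ path, so that $d_G(v,y)=d-1$. Working inside $G\setminus v$, since $u\in 0_{\theta,G\setminus v}$ and $y$ is a neighbor of $u$, Proposition \ref{frontier} forces $y\in 0_{\theta,G\setminus v}\cup\infty_{\theta,G\setminus v}$. If $y\in 0_{\theta,G\setminus v}$, then $(v,y)$ is a bad pair with $d_G(v,y)=d-1<d$, contradicting minimality. Thus the analysis reduces to the case $y\in\infty_{\theta,G\setminus v}$, equivalently $y\in\partial 0_{\theta,G\setminus v}$.

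This last case is the crux. Here the shortest path is ``padded'' by an $\infty$-vertex $y$ lying between $u$ and $v$, and the naive distance reduction fails; indeed when $d=2$ this is the only possibility. I would resolve it by an augmenting/alternating argument inside $G\setminus v$ built on the Stability Lemma (Theorem \ref{stability}): removing the frontier vertex $y$ preserves the entire $0$-set, so that $u\in 0_{\theta,G\setminus\{v,y\}}$, and iterating this peeling of $\partial 0_{\theta,G\setminus v}\subseteq\infty_{\theta,G\setminus v}$ should isolate a strictly smaller connected $\theta$-critical subgraph supported on the $0$-vertices, to which the induction hypothesis applies. The accompanying sign bookkeeping is governed by Propositions \ref{relative1} and \ref{relative2} and by the already recorded fact that $m_\theta(G)-m_\theta(G\setminus c)\le 1$ along every path. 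Converting this peeling into a clean contradiction with the minimality of $d$ — that is, correctly tracking how $0$- and $\infty$-vertices interleave along the connecting path, and ensuring connectivity and the multiplicity count survive each Stability step — is the main obstacle, and it is precisely the weighted analogue of the alternating-path step in the classical proof of Gallai's lemma.
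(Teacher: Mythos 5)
Your setup is sound, and in fact it reproduces the first half of the paper's own argument almost verbatim: your ``bad pairs'' $(v,u)$ with $u\in 0_{\theta,G\setminus v}$ are exactly what the paper extracts from Lemma \ref{zeroset}, your observation that bad pairs have $d_G(v,u)\geq 2$ is the paper's application of Lemma \ref{internal}, and your dichotomy for the neighbor $y$ (either $y\in 0_{\theta,G\setminus v}$, or $y\in\partial 0_{\theta,G\setminus v}\subseteq\infty_{\theta,G\setminus v}$ by Proposition \ref{frontier}) is how the paper locates a frontier vertex as well. But the case you call ``the crux'' is the entire content of the theorem, and your proposed resolution does not close it. The peeling you describe cannot work as stated: the Stability Lemma \ref{stability} applied to $y\in\partial 0_{\theta,G\setminus v}$ says precisely that $0_{\theta,G\setminus v,y}=0_{\theta,G\setminus v}$, so peeling frontier vertices never changes the $0$-set and in particular can never manufacture a bad pair at distance less than $d$; and applying an induction hypothesis to a smaller connected $\theta$-critical piece only tells you that piece has $m_\theta=1$, which contradicts neither $m_\theta(G)\geq 2$ nor the minimality of $d$. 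Your argument therefore terminates exactly where the real work begins, as you yourself acknowledge.

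The missing idea is a symmetry step, not a peeling step. In the crux case you know $y\in\infty_{\theta,G\setminus v}$ while $v,y\in 0_{\theta,G}$ (by criticality). Run the sign rules of Figure \ref{table}: the case $\lambda_{v\sim y}(\theta)=0$ is impossible, since Proposition \ref{relative0} would give $0=\alpha_y(G)(\theta)=\alpha_y(G\setminus v)(\theta)=\infty$; the case $\lambda_{v\sim y}(\theta)\in(-\infty,0)$ is impossible, since item $(f)$ of Proposition \ref{relative1} applied to $y\in\infty_{\theta,G\setminus v}$ would force $y\in\infty_{\theta,G}$; hence $\lambda_{v\sim y}(\theta)=-\infty$, and Proposition \ref{finitelambda} yields $v\in\infty_{\theta,G\setminus y}$. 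Now Lemma \ref{internal}, applied in $G\setminus y$, produces a neighbor $k$ of $v$ with $k\in 0_{\theta,G\setminus y,v}$, and the Stability Lemma (again at $y\in\partial 0_{\theta,G\setminus v}$) transfers this to $k\in 0_{\theta,G\setminus v}$. That is a bad pair $(v,k)$ at distance one --- equivalently, by Lemma \ref{internal}, $v\in\infty_{\theta,G}$, contradicting $v\in 0_{\theta,G}$. Note that once you have this step, your minimal-distance machinery (and the induction on $n$, and the residue identity) becomes unnecessary: \emph{any} vertex of $\partial 0_{\theta,G\setminus v}$, which is nonempty by connectivity, does the job. That streamlined version is exactly the paper's proof.
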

\begin{proof} Assume, by contradiction, that $[n]=0_{\theta,G}$ and $m_\theta(G)$ is at least two. Consider a vertex $i$ in $[n]=0_{\theta,G}$. In this case, since $m_\theta(G\setminus i)\geq 1$, the Lemma~\ref{zeroset} implies that $0_{\theta,G\setminus i}$ is non-empty. As $i$ does not belong to $\infty_{\theta,G}$, the Lemma~\ref{internal} implies that the neighbors of $i$ are not in $0_{\theta,G\setminus i}$. Since the graph $G$ is connected, there exists a path from some neighbor of $i$ to a vertex in $0_{\theta,G\setminus i}$. This shows that $\partial 0_{\theta,G\setminus i}$ is non-empty.
	
Let $j$ be a vertex in $\partial 0_{\theta,G\setminus i}$. In particular, by Proposition~\ref{frontier}, $j\in \infty_{\theta,G\setminus i}$. As $j\in \infty_{\theta,G\setminus i}\cap 0_{\theta,G}$ and $i\in 0_{\theta,G}$, we have by Figure~\ref{table} that $i\in\infty_{\theta,G\setminus j}$. This implies by Lemma~\ref{internal} that there exists a neighbor $k$ of $i$ that is in $0_{\theta,G\setminus \{j,i\}}$. But by the Stability Lemma~\ref{stability} applied to $j\in \partial 0_{\theta,G\setminus i}$ it holds that $0_{\theta,G\setminus \{i,j\}}=0_{\theta,G\setminus i}$. Thus the neighbor $k$ of $i$ is in $0_{\theta,G\setminus i}$, which implies by Lemma~\ref{internal} that $i$ is in $\infty_{\theta,G}$, reaching a contradiction. 
\end{proof}

The Theorem~\ref{gallai} leads to the following corollary by Ku and Wong~\cite[p. 3409, Cor. 4.14]{ku2013gallai}.

\begin{cor}(Ku, Wong~\cite{ku2013gallai})\label{multiplicity} The multiplicity of $\theta$ as a zero of $\mu(G)$ is equal to the number of $\theta$-critical components of $G$ minus the number of vertices in $\partial 0_{\theta,G}$. 
\end{cor}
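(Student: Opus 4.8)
The plan is to delete the entire frontier $\partial 0_{\theta,G}$ one vertex at a time and then read off the multiplicity from the resulting decomposition into connected components. Write $A=\partial 0_{\theta,G}$. The starting observation is that, by Proposition \ref{frontier}, every vertex of $A$ lies in $\infty_{\theta,G}$, so $m_\theta(G\setminus i)=m_\theta(G)+1$ for each $i\in A$; that is, deleting a frontier vertex raises the multiplicity by exactly one.

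Next I would iterate this deletion. Fix an ordering of $A$ and remove its vertices successively. After removing some $i\in A$, the Stability Lemma (Theorem \ref{stability}) gives $0_{\theta,G\setminus i}=0_{\theta,G}$ and $\infty_{\theta,G\setminus i}=\infty_{\theta,G}\setminus i$; since the frontier of $0_{\theta,G}$ is a purely combinatorial notion depending only on the (unchanged) set $0_{\theta,G}$ and the incidence structure of the current graph, it follows that $\partial 0_{\theta,G\setminus i}=A\setminus i$. Hence at every stage the next vertex to be deleted still lies in the frontier of the current graph, so Proposition \ref{frontier} applies again and each of the $|A|$ deletions raises the multiplicity by one. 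Summing over all deletions gives $m_\theta(G\setminus A)=m_\theta(G)+|A|$, while $0_{\theta,G\setminus A}=0_{\theta,G}$.

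The third step is to identify which connected components of $G\setminus A$ carry the multiplicity. Any vertex of $0_{\theta,G}$ has each of its neighbors either inside $0_{\theta,G}$ or, if outside, inside $\partial 0_{\theta,G}=A$ by definition of the frontier; removing $A$ therefore severs every $\theta$-critical component from the rest of the graph, so each $\theta$-critical component $K$ becomes a full connected component of $G\setminus A$. Consequently $\mu(G\setminus A)$ factors as a product of matching polynomials over the connected components, and the continued fractions $\alpha_v$ of vertices in one component are unaffected by the others. For a $\theta$-critical component $K$ one has $0_{\theta,K}=0_{\theta,G\setminus A}\cap V(K)=V(K)$, so $K$ is connected and $\theta$-critical and Gallai's lemma (Theorem \ref{gallai}) gives $m_\theta(K)=1$. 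Every other component $L$ satisfies $0_{\theta,L}=0_{\theta,G}\cap V(L)=\emptyset$, so by Lemma \ref{zeroset} it contributes nothing to the multiplicity. Therefore $m_\theta(G\setminus A)$ equals the number $c(0_{\theta,G})$ of $\theta$-critical components.

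Combining the two computations of $m_\theta(G\setminus A)$ yields $m_\theta(G)=c(0_{\theta,G})-|\partial 0_{\theta,G}|$, as claimed. The point requiring the most care — and the main obstacle — is justifying the iterated deletion: one must verify that the frontier behaves as expected under successive removals, so that the Stability Lemma can legitimately be reapplied at each stage rather than only once. Once the identity $\partial 0_{\theta,G\setminus i}=A\setminus i$ is established, the argument is routine, and the factorization of $\mu(G\setminus A)$ over components together with Gallai's lemma completes the count.
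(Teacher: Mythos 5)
Your proof is correct and follows essentially the same route as the paper's: delete the frontier $\partial 0_{\theta,G}$, use the Stability Lemma \ref{stability} to get $m_\theta(G\setminus \partial 0_{\theta,G})=m_\theta(G)+|\partial 0_{\theta,G}|$ and to see that the $\theta$-critical components become isolated components, then count them via Gallai's lemma (Theorem \ref{gallai}) and Lemma \ref{zeroset}. The only difference is that you spell out the details the paper leaves implicit, namely the justification that the frontier shrinks to $A\setminus i$ after each deletion (so the Stability Lemma can be reapplied) and the factorization of $\mu(G\setminus A)$ over components.
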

\begin{proof} Note that by the Stability Lemma~\ref{stability} for every subset $S$ of $\partial 0_{\theta, G}$, it holds that $\partial 0_{\theta, G\setminus S} = \partial 0_{\theta, G}\setminus S$. In particular, the $\theta$-critical components of $G\setminus \partial 0_{\theta,G}$ are the $\theta$-critical components of $G$. But in $G\setminus \partial 0_{\theta,G}$ all the $\theta$-critical components are isolated. This implies by Theorem~\ref{gallai} that $m_\theta(G\setminus \partial 0_{\theta,G})$ is equal to the number of $\theta$-critical components of $G$. 

Also note that for each subset $S$ of $\partial 0_{\theta, G}$, removing one vertex of $S$ at a time and using the Stability Lemma~\ref{stability} and Proposition~\ref{frontier}, we get that $m_\theta(G\setminus S)$ is equal to $m_\theta(G)+|S|$. In particular, $m_\theta(G\setminus \partial 0_{\theta,G})$ is equal to  $m_\theta(G)+|\partial 0_{\theta,G}|$, which finishes the proof.
\end{proof}

The Corollary~\ref{multiplicity} implies that if $\theta$ is a zero of $\mu(G)$, then, since $m_\theta(G)\geq 1$, there are more $\theta$-critical components of $G$ than there are vertices in $\partial 0_{\theta,G}$. It turns out that the analogue of item $(c)$ of the Gallai-Edmonds structure theorem~\ref{gallai-edmonds} holds for matching polynomials.

\begin{cor}\label{matched} For every nonempty subset $S$ of $\partial 0_{\theta,G}$ there are at least $|S|+1$ $\theta$-critical components of $G$ that are connected to a vertex in $S$. 
\end{cor}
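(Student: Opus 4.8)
The plan is to reduce to the ``full frontier'' situation by deleting the frontier vertices outside $S$, and then extract the inequality from the multiplicity formula of Corollary \ref{multiplicity} together with additivity of $m_\theta$ over connected components.

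First I would set $A=\partial 0_{\theta,G}$ and $T=A\setminus S$, and form $G'=G\setminus T$. Deleting the vertices of $T$ one at a time, each remains a frontier vertex of the current graph, so the Stability Lemma \ref{stability} applies repeatedly and yields $0_{\theta,G'}=0_{\theta,G}$ together with $\partial 0_{\theta,G'}=S$ (no new frontier vertex is created by deletion). Moreover the subgraph induced on $0_\theta$ is untouched, so $G$ and $G'$ have exactly the same $\theta$-critical components and the same incidences with $S$. Applying Corollary \ref{multiplicity} to $G'$ then gives $m_\theta(G')=c_\theta(G)-|S|$, where $c_\theta$ denotes the number of $\theta$-critical components.

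Next I would decompose $G'$ into its connected components and use that $m_\theta$ is additive over them. Because $S$ is now the entire frontier of $0_{\theta,G'}$, a $\theta$-critical component that is not connected to $S$ has no edge leaving $0_\theta$, hence is an isolated connected component of $G'$; by Gallai's lemma \ref{gallai} each such component contributes multiplicity $1$. A connected component of $G'$ containing no vertex of $0_\theta$ contributes multiplicity $0$ by Lemma \ref{zeroset}. Writing $k$ for the number of $\theta$-critical components connected to $S$, the remaining components are exactly those containing a vertex of $S$, and their total multiplicity $m_1$ satisfies $m_\theta(G')=(c_\theta(G)-k)+m_1$; comparing with the formula above gives $m_1=k-|S|$.

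Finally, to close the argument I would show $m_1\ge 1$: assuming $S\neq\emptyset$, choose $s\in S$ and a neighbor of $s$ lying in $0_\theta$; they sit in a common connected component $C$ of $G'$ that meets $0_\theta$, so $0_{\theta,C}\neq\emptyset$ and $m_\theta(C)\ge 1$ by Lemma \ref{zeroset}, whence $m_1\ge 1$ and $k=|S|+m_1\ge|S|+1$. The case $S=\emptyset$ is immediate, since $\partial 0_{\theta,G}\neq\emptyset$ forces $0_{\theta,G}\neq\emptyset$, so $\theta$ is a zero of $\mu(G)$ and there is at least one $\theta$-critical component. The step I expect to demand the most care is the component bookkeeping in $G'$: verifying that ``$\theta$-critical and not connected to $S$'' is equivalent to ``isolated component of $G'$'' (which uses that $S$ is the full frontier), and that the sign partition of a connected component computed inside that component coincides with its restriction from $G'$, so that Lemma \ref{zeroset} and Gallai's lemma \ref{gallai} may legitimately be applied componentwise.
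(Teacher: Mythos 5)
Your proof is correct and takes essentially the same route as the paper's: delete the frontier vertices outside $S$ one at a time via the Stability Lemma \ref{stability} so that $\partial 0_{\theta,G'}=S$, apply Corollary \ref{multiplicity}, and use that $\theta$-critical components not adjacent to $S$ are isolated components contributing multiplicity one by Theorem \ref{gallai}. The paper merely compresses your explicit component-wise additivity by also deleting the isolated $\theta$-critical components from $G'$ and observing $m_\theta(G')\geq 1$, which is precisely your step $m_1\geq 1$.
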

\begin{proof} By the Stability Lemma~\ref{stability} we can restrict ourselves to the graph $G'$ obtained from $G$ by first deleting all the vertices in $\partial 0_{\theta,G}\setminus S$ and then deleting all the isolated $\theta$-critical components. Observe that $\partial 0_{\theta,G'}=S$ and all the $\theta$-critical components of $G'$ are $\theta$-critical components of $G$. Note that since $S$ is nonempty, $G'$ has at least one $\theta$-critical component. Since $m_\theta(G')\geq 1$ and $\partial 0_{\theta,G'}=S$, the Corollary~\ref{multiplicity} then implies that there at least $|S|+1$ $\theta$-critical components in $G'$. But there are no isolated $\theta$-critical components in $G'$, so all of the $|S|+1$ $\theta$-critical components are connected to a vertex in $S$.
\end{proof}

Using Corollary~\ref{matched} and the path tree we can give a conceptual explanation for why the Stability Lemma~\ref{stability} is true. Let $i$ and  $j\in\partial 0_{\theta,G}$ be two distinct vertices of the graph $G$. Consider the tree continued fraction $\alpha_i(T^i_G)(\theta)$. 
Observe that for every path $c:i=i_1\to i_k=j$ it holds $\alpha_j(G\setminus \{i_1,\dots,i_{k-1}\})(\theta)=\infty$. This means that along the tree continued fraction $\alpha_i(T^i_G)(\theta)$, the vertex $j$ always corresponds to a node with an infinity, and so it can be disregarded.

In order to see this, note that if the path $c$ does not go through $0_{\theta,G}$, then by Figure~\ref{table} the original $\theta$-critical components are unaffected in the graph $G\setminus \{i_1,\dots,i_{k-1}\}$. It follows that there is a remaining $\theta$-critical component connected to $j$ which guarantees $\alpha_j(G\setminus \{i_1,\dots,i_{k-1}\})(\theta)=\infty$. Now, if the path $c$ goes through $0_{\theta,G}$, then the Corollary~\ref{matched} guarantees that there is also a remaining $\theta$-critical component connected to $j$ in $G\setminus \{i_1,\dots,i_{k-1}\}$.

With the same reasoning, it is clear that the following version of the Stability Lemma~\ref{stability} is also true.
 
\begin{cor}(Stability lemma II) Let $G$ be a graph with two distinct vertices $i$ and $j\in \partial 0_{\theta,G}$. Consider the graph $G'$ obtained from $G$ where the weights $r_j$ and $\lambda_{jk}\leq 0$, for all $k\neq j$, are modified. Assume that for every subset $S$ of $\partial 0_{\theta,G}$ there are at least $|S|+1$ $\theta$-critical components of $G$ that are connected to a vertex in $S$ in the graph $G'$. In this case, $\alpha_i(G')(\theta)=\alpha_i(G)(\theta)$ for every vertex $i$.
\end{cor}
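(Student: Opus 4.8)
The plan is to run, inside $G'$, the very path-tree argument that the preceding paragraphs gave for the Stability Lemma \ref{stability}, the only fresh ingredient being that the Hall-type surplus of Corollary \ref{matched} is handed to us directly as a hypothesis about $G'$. The first point is that $G$ and $G'$ differ only in the weights incident to $j$, so $G\setminus j=G'\setminus j$; in particular every $\theta$-critical set of the form $0_{\theta,G\setminus W\setminus j}$, which never sees the weights at $j$, is the same whether computed in $G$ or in $G'$. Applying the Stability Lemma \ref{stability} to the frontier vertex $j\in\partial 0_{\theta,G}$ gives $\alpha_i(G)(\theta)=\alpha_i(G\setminus j)(\theta)$ for every $i\neq j$, so it suffices to prove the mirror identity $\alpha_i(G')(\theta)=\alpha_i(G'\setminus j)(\theta)$; composing the two through $\alpha_i(G'\setminus j)(\theta)=\alpha_i(G\setminus j)(\theta)$ then yields the claim for all $i\neq j$. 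For the root $i=j$ both sides are infinite: the left because $j\in\partial 0_{\theta,G}\subseteq\infty_{\theta,G}$ by Proposition \ref{frontier}, and the right because the case $S=\{j\}$ of the hypothesis puts a $G'$-neighbor of $j$ into $0_{\theta,G}=0_{\theta,G'\setminus j}$, so $j\in\infty_{\theta,G'}$ by Lemma \ref{internal}.

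The heart of the matter is therefore to show that $j$ may be disregarded in the tree continued fraction $\alpha_i(T^i_{G'})(\theta)=\alpha_i(G')(\theta)$ of Lemma \ref{godsiltree}; that is, that for every path $c:i=i_1\to\cdots\to i_k=j$ in $G'$ one has $\alpha_j(G'\setminus i_1,\dots,i_{k-1})(\theta)=\infty$. Granting this, each node of $T^i_{G'}$ labelled by $j$ feeds a term $\lambda/\infty=0$ into its parent, so all such nodes and the subtrees below them may be pruned without changing the value; the pruned tree is precisely $T^i_{G'\setminus j}$, whence $\alpha_i(G')(\theta)=\alpha_i(G'\setminus j)(\theta)$, as wanted. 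Writing $W=\{i_1,\dots,i_{k-1}\}$ and using $(G'\setminus W)\setminus j=(G\setminus W)\setminus j$, Lemma \ref{internal} turns this identity into a purely combinatorial assertion: $j$ must be $G'$-adjacent to some vertex of $0_{\theta,G\setminus W\setminus j}$.

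To produce such a vertex I would split into the two cases of the earlier proof. If the path $c$ avoids $0_{\theta,G}$, then deleting the vertices of $W$, all lying outside $0_{\theta,G}$, leaves the $\theta$-critical components of $G$ intact (as read from Figure \ref{table}), and the case $S=\{j\}$ of the hypothesis already exhibits a $\theta$-critical component of $G$ that is $G'$-attached to $j$ and is untouched by the deletion. If instead $c$ passes through $0_{\theta,G}$, deleting its $0$-vertices may merge or destroy components, and here the surplus is what rescues the argument. Since $c$ can only leave a $\theta$-critical component through a frontier vertex (Proposition \ref{frontier}), the components it destroys are paid for by the frontier vertices of $W$ it deletes; applying the hypothesis to the set $S$ formed by $j$ together with these frontier vertices yields at least $|S|+1$ $\theta$-critical components $G'$-attached to $S$, and the $+1$ surplus (used as in a defect Hall argument, matching each deleted frontier vertex to a distinct destroyed component) leaves a component that survives in $G\setminus W\setminus j$ and is still $G'$-adjacent to $j$. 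The main obstacle is exactly this bookkeeping — pinning down how the $\theta$-critical components and the frontier reorganize as the path is removed, and converting the $|S|+1$ surplus into the survival of one component attached specifically to $j$; everything else is a faithful copy of the proof of the Stability Lemma \ref{stability}.
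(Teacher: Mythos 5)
Your proposal follows essentially the same route as the paper: the paper obtains this corollary by repeating its conceptual path-tree argument for the Stability Lemma --- every node of $T^i_{G'}$ labelled $j$ carries an infinity and can therefore be disregarded, with the case split on whether the path meets $0_{\theta,G}$ and with the surplus hypothesis playing the role of Corollary \ref{matched} --- which is precisely your plan, down to the reduction through $G\setminus j=G'\setminus j$ and Lemma \ref{internal}. The bookkeeping you flag as the main remaining obstacle is likewise left implicit in the paper, which simply declares the corollary true by that same reasoning, so your write-up is at least as detailed as the paper's own argument.
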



\section{Applications}\label{applications}

\subsection{Lower Bound for Largest Zero of $\mu(G)$}

Using the techniques from the last section, we were also able to easily obtain a lower bound to the greatest zero of a matching polynomial in the same spirit as Corollary~\ref{Heilmann-Lieb-real} and the Heilmann-Lieb Theorem~\ref{Heilmann-Lieb}. 

Observe that the classical Heilmann-Leib Theorem~\ref{Heilmann-Lieb} only gives an upper bound on the size of the greatest zero of a matching polynomial. For example, if $G$ is a $d$-regular graph with $d\geq 3$, then the Corollary~\ref{Heilmann-Lieb-real} gives an upper bound of $2\sqrt{d-1}$ for the largest zero of the matching polynomial of $G$. The result of this section implies that this largest zero is also bigger than $\sqrt{d}$.

Our proof proceeds by first showing, using the results from the last section, that the greatest zero of a matching polynomial is simple, a result that is already found in the work of Godsil and Gutman~\cite[p. 143]{godsil1981theory}. With this we show that the largest zero of a matching polynomial decreases as the weights on the edges increase. Using this last fact, we easily obtain the promised lower bound. An analogous strategy can also be used to establish an upper bound for the smallest zero of the matching polynomial.

The second part of the next lemma appears in Godsil and Gutman's work~\cite[p. 143]{godsil1981theory}.

\begin{lem}\label{simple} Let $G$ be a connected graph. If $\theta$ is the largest or smallest zero of $\mu(G)$, then $G$ is $\theta$-critical. In particular, $\theta$ is a simple zero of $\mu(G)$.
\end{lem}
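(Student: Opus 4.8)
The plan is to treat the largest zero; the smallest is symmetric (run the same argument using $\alpha_i(G)(x)\to-\infty$ as $x\to-\infty$, or replace each $r_i$ by $-r_i$ and look at $-x$). Write $\theta$ for the largest zero of $\mu(G)$, so that $\mu(G)(x)>0$ for $x>\theta$ since $\mu(G)$ is monic. The goal is to show $[n]=0_{\theta,G}$, after which simplicity of $\theta$ follows immediately from the Gallai lemma analogue (Theorem \ref{gallai}).

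First I would record the local shape of each $\alpha_i(G)$ coming from Corollary \ref{derivative1}: on every branch the function is strictly increasing and surjective, with all zeros and poles simple, and $\alpha_i(G)(x)\to+\infty$ as $x\to+\infty$. Consequently the rightmost branch is an interval $(p,+\infty)$ (with $p$ the largest pole of $\alpha_i(G)$, or $p=-\infty$ if there are none) on which $\alpha_i(G)$ increases from $-\infty$ to $+\infty$, and therefore contains exactly one zero $z_i$, which is the largest zero of $\alpha_i(G)$. Since every zero of $\alpha_i(G)$ is a zero of $\mu(G)$, we get $z_i\le\theta$; and since $z_i$ lies to the right of all poles of $\alpha_i(G)$, the function is finite and non-negative on $[z_i,+\infty)\ni\theta$. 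Thus $\alpha_i(G)(\theta)$ is finite and $\ge 0$ for every $i$, which is precisely the statement that $-_{\theta,G}=\infty_{\theta,G}=\emptyset$; that is, every vertex lies in $0_{\theta,G}\sqcup +_{\theta,G}$.

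The second step is to rule out $+_{\theta,G}$ using connectivity. Because $\theta$ is a zero of $\mu(G)$, Lemma \ref{zeroset} gives $0_{\theta,G}\neq\emptyset$. If $+_{\theta,G}$ were also non-empty, then connectedness of $G$ would produce an edge between the two sets, i.e. a vertex $i\in+_{\theta,G}$ adjacent to some vertex of $0_{\theta,G}$. Such an $i$ lies in $\partial 0_{\theta,G}$, so Proposition \ref{frontier} forces $i\in\infty_{\theta,G}$, contradicting $i\in+_{\theta,G}$ (the two classes are disjoint, and we already showed $\infty_{\theta,G}=\emptyset$). Hence $+_{\theta,G}=\emptyset$ and $[n]=0_{\theta,G}$, so $G$ is $\theta$-critical; applying Theorem \ref{gallai} then gives $m_\theta(G)=1$.

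The only delicate point is the first step: extracting from the monotone branch structure of Corollary \ref{derivative1} that, at the largest zero, no vertex can be in $-_{\theta,G}$ or $\infty_{\theta,G}$. The crux is that the largest zero of each $\alpha_i(G)$ sits on the unbounded right branch, hence to the right of all its poles, so beyond it $\alpha_i(G)$ stays finite and positive; I expect this to require the most care, whereas the connectivity argument closing the proof is routine. One should also verify the degenerate cases (a single vertex, or $\mu(G\setminus i)$ sharing all its roots with $\mu(G)$ so that $\alpha_i(G)$ has no poles), but in each of these the same branch reasoning applies verbatim.
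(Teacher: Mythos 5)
Your proof is correct and takes essentially the same route as the paper's: establish that at the extreme zero every vertex lies in $0_{\theta,G}$ together with one finite sign class (the paper shows $[n]=-_{\theta,G}\sqcup 0_{\theta,G}$ at the smallest zero via the sign-evolution picture; you show $[n]=0_{\theta,G}\sqcup +_{\theta,G}$ at the largest zero via the rightmost-branch argument, both resting on Corollary \ref{derivative1}), then use Lemma \ref{zeroset}, connectivity and Proposition \ref{frontier} to force $[n]=0_{\theta,G}$, and finish with Theorem \ref{gallai}. The two sign-pinning arguments are mirror images of one another, so this is the same proof in substance.
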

\begin{proof} Let $\theta$ be the smallest zero of $\mu(G)$. By Proposition~\ref{zeroset} we know that $0_{\theta,G}$ is non-empty. Observe that, if $x$ is smaller than $\theta$, then $[n]=-_{x,G}$. This implies that at time $\theta$ all the vertices are in $-_{\theta,G}\sqcup 0_{\theta,G}$. But $G$ is connected and by Proposition~\ref{frontier} it holds $\partial 0_{\theta,G}\subseteq \infty_{\theta,G}$, so it must be $[n]=0_{\theta,G}$. This implies, by Theorem~\ref{gallai} that $m_\theta(G)=1$, so $\theta$ is a simple zero. For the largest zero of $\mu(G)$ the proof is analogous.
\end{proof}

Given a graph $G$ denote by $z_G$ the largest zero of its matching polynomial.

\begin{lem}\label{largestzero} Let $G$ be a connected graph and consider the  graph $G'$ obtained from $G$ where the edge $ij$ receives a new weight $\lambda_{ij}<\lambda_{ij}'\leq 0$. In this case, $z_G>z_{G'}$.
\end{lem}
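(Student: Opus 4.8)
The plan is to track how the weighted matching polynomial changes as the single edge weight is increased and to show that this pushes the largest zero strictly to the left. The starting point is the edge-deletion identity obtained by splitting the matchings of $G$ according to whether they contain the edge $ij$: this gives $\mu(G)=\mu(G-ij)+\lambda_{ij}\,\mu(G\setminus i,j)$, where $G-ij$ denotes $G$ with the edge $ij$ removed (that is, $\lambda_{ij}$ set to zero). Since $G'$ differs from $G$ only in the weight of $ij$, both $G-ij$ and $G\setminus i,j$ are unchanged, so subtracting yields $\mu(G')-\mu(G)=(\lambda_{ij}'-\lambda_{ij})\,\mu(G\setminus i,j)$, in which the scalar $\lambda_{ij}'-\lambda_{ij}$ is strictly positive.

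My target is the clean inequality $\mu(G')(x)>0$ for every $x\ge z_G$. Since $G'$ also has non-positive edge weights, Corollary \ref{Heilmann-Lieb-real} guarantees that $\mu(G')$ is monic with only real zeros, so such a sign statement immediately forces $z_{G'}<z_G$. To obtain it I would feed two facts into the displayed identity: first, that $\mu(G)(x)>0$ for $x>z_G$ while $\mu(G)(z_G)=0$, which is just the defining property of the largest zero of a monic polynomial; and second, that $\mu(G\setminus i,j)(x)>0$ for all $x\ge z_G$.

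The heart of the argument, and the step I expect to be the main obstacle, is this positivity of $\mu(G\setminus i,j)$ at and beyond $z_G$, equivalently that $z_G$ strictly exceeds the largest zero of $\mu(G\setminus i,j)$. Here I would invoke Lemma \ref{simple}: because $G$ is connected and $z_G$ is its largest zero, $G$ is $z_G$-critical and $z_G$ is a simple zero, so every vertex lies in $0_{z_G,G}$. In particular $i\in 0_{z_G,G}$ forces $m_{z_G}(G\setminus i)=m_{z_G}(G)-1=0$, so $z_G$ is not a zero of $\mu(G\setminus i)$. Interlacing (Corollary \ref{interlacing}) gives $z_G\ge z_{G\setminus i}$, and since $z_G$ is not itself a zero of $\mu(G\setminus i)$ this strengthens to $z_G>z_{G\setminus i}$. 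Applying interlacing once more to the deletion of $j$ from $G\setminus i$ gives $z_{G\setminus i}\ge z_{G\setminus i,j}$, hence $z_G>z_{G\setminus i,j}$. As $\mu(G\setminus i,j)$ is monic with all zeros real and strictly below $z_G$, it is strictly positive on $[z_G,\infty)$.

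Putting the pieces together, for $x>z_G$ both summands on the right of $\mu(G')=\mu(G)+(\lambda_{ij}'-\lambda_{ij})\,\mu(G\setminus i,j)$ are positive, while at $x=z_G$ the first vanishes and the second is positive; thus $\mu(G')(x)>0$ throughout $[z_G,\infty)$ and therefore $z_{G'}<z_G$. The only delicate point is securing the strict inequality $z_G>z_{G\setminus i,j}$, for which the connectedness hypothesis and Lemma \ref{simple} are essential; everything else is routine sign bookkeeping using the monic, real-rooted structure supplied by Heilmann-Lieb.
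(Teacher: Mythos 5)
Your proposal is correct and follows essentially the same route as the paper: the same decomposition $\mu(G')=\mu(G)+(\lambda_{ij}'-\lambda_{ij})\mu(G\setminus i,j)$, the same appeal to Lemma \ref{simple} to get $z_G$-criticality, and the same use of interlacing (Corollary \ref{interlacing}) to place the largest zero of $\mu(G\setminus i,j)$ strictly below $z_G$, yielding positivity of $\mu(G')$ on $[z_G,\infty)$. Your write-up is in fact slightly more detailed than the paper's at the one delicate step, spelling out that $i\in 0_{z_G,G}$ forces $m_{z_G}(G\setminus i)=0$ before applying interlacing twice.
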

\begin{proof} The Lemma~\ref{simple} shows that $G$ is $z_G$-critical. Using the interlacing of Corollary~\ref{interlacing} this implies that the largest zero of $\mu(G\setminus \{i,j\})$ is smaller than $z_G$. It follows that $\mu(G)(x)\geq 0$ and $\mu(G\setminus \{i,j\})(x)>0$ for $x\geq z_G$. As a consequence, $\mu(G')(x)=\mu(G)(x)-(\lambda_{ij}-\lambda_{ij}')\mu(G\setminus \{i,j\})(x)>0$, for $x\geq z_G$. This shows that the largest zero of $\mu(G')$ is smaller than the largest zero of $\mu(G)$. 
\end{proof}

\begin{lem} Let $G$ be a connected graph with at least three vertices and $r_i=\ds\max_jr_j$. In this case,
	
\[r_i<z^*\leq z_G<r_i+2\sqrt{\ds\max_j\,\max_{\mathclap{\substack{A\subseteq [n]\setminus j\\ |A|=n-2}}}\quad\sum_{k\in A}-\lambda_{jk}},\,\,\text{where } z^*=r_i+\ds\sum_{j\neq i}\dfrac{-\lambda_{ij}}{z^*-r_j}.\]
	
In particular, if $r_j$ is zero for every $j$, then
	
\[\sqrt{\ds\max_j\sum_{k\neq j}-\lambda_{jk}}\leq z_G<2\sqrt{\ds\max_j\,\max_{\mathclap{\substack{A\subseteq [n]\setminus j\\ |A|=n-2}}}\quad\sum_{k\in A}-\lambda_{jk}}.\]
\end{lem}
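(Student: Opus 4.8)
The plan is to read off both the fixed-point lower bound and the location of the largest zero $z_G$ directly from the defining recurrence of the graph continued fraction, evaluated at $z_G$. First I would invoke Lemma \ref{simple}: since $G$ is connected and $z_G$ is its largest zero, $G$ is $z_G$-critical, so $0_{z_G,G}=[n]$ and in particular $\alpha_j(G)(z_G)=0$ for every vertex $j$. Fixing the vertex $i$ with $r_i=\max_j r_j$ and evaluating the recurrence $\alpha_i(G)(x)=x-r_i+\sum_{j\neq i}\lambda_{ij}/\alpha_j(G\setminus i)(x)$ at $x=z_G$ gives the key identity
\[ z_G-r_i=\sum_{j\neq i}\frac{-\lambda_{ij}}{\alpha_j(G\setminus i)(z_G)}. \]

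Next I would control the denominators. Because $i\in 0_{z_G,G}$ we have $m_{z_G}(G\setminus i)=m_{z_G}(G)-1=0$, so $z_G$ is not a zero of $\mu(G\setminus i)$; by the interlacing Corollary \ref{interlacing} it then strictly exceeds every zero of $\mu(G\setminus i)$ and of each of its further vertex-deletions. Hence all these matching polynomials are positive at $z_G$, giving $\alpha_j(G\setminus i)(z_G)>0$, and feeding this positivity back into the recurrence for $\alpha_j(G\setminus i)$ (whose correction terms are nonpositive) yields $0<\alpha_j(G\setminus i)(z_G)\le z_G-r_j$. Since $-\lambda_{ij}\ge 0$, substituting the upper bound $\alpha_j(G\setminus i)(z_G)\le z_G-r_j$ into the identity produces $z_G\ge F(z_G)$, where $F(x):=r_i+\sum_{j\neq i}(-\lambda_{ij})/(x-r_j)$.

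It then remains to analyze $F$. On $(\rho,\infty)$, with $\rho:=\max\{r_j:\lambda_{ij}\neq 0\}$ (nonempty since $G$ is connected), $F$ is strictly decreasing, tends to $r_i$ at $+\infty$ and to $+\infty$ at $\rho^{+}$, so $g(x):=F(x)-x$ is strictly decreasing from $+\infty$ to $-\infty$ and has a unique zero $z^{*}$, the claimed fixed point; moreover $z^{*}=F(z^{*})>r_i$. As $z_G>r_j$ for all $j\neq i$ (hence $z_G>\rho$) and $g(z_G)\le 0$, monotonicity of $g$ forces $z_G\ge z^{*}$, establishing $r_i<z^{*}\le z_G$. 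The upper bound $z_G<r_i+2\sqrt{B_G}$ I would take from the Heilmann--Lieb Corollary \ref{Heilmann-Lieb-real}. For the specialization $r_j\equiv 0$, every vertex attains the maximum weight, so for each $i$ the fixed-point equation collapses to $(z^{*})^{2}=\sum_{j\neq i}(-\lambda_{ij})$; choosing $i$ to maximize this sum and using $z_G\ge z^{*}$ gives $z_G\ge\sqrt{\max_j\sum_{k\neq j}(-\lambda_{jk})}$, while the upper bound specializes directly.

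The hard part will be upgrading the Heilmann--Lieb bound from $\le$ to the strict $<$ asserted in the statement, since Corollary \ref{Heilmann-Lieb-real} only places the zeros in the closed interval and the point $x_0:=r_i+2\sqrt{B_G}$ sits exactly on the boundary hypothesis $|x-r_i|>2\sqrt{B_G}$ of Theorem \ref{Heilmann-Lieb}. I would handle this with a self-consistent continued-fraction estimate: by induction on the number of vertices one shows $\alpha_j(H)(x_0)\ge\sqrt{B_G}$ for every proper induced subgraph $H$ and vertex $j$, the key point being that in a proper subgraph each vertex has at most $n-2$ neighbors, so $\sum_k(-\lambda_{jk})\le B_G$. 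Rooting $\mu(G)=\alpha_i(G)\,\mu(G\setminus i)$ at any vertex $i$ of degree at most $n-2$ then gives $\alpha_i(G)(x_0)\ge (x_0-r_i)-B_G/\sqrt{B_G}\ge\sqrt{B_G}>0$, whence $z_G<x_0$. The only remaining case, where every vertex has degree $n-1$, I would close by an averaging argument showing $\sum_{j\neq i}(-\lambda_{ij})\le\frac{n-1}{n-2}B_G<2B_G$ for some $i$ when $n\ge 4$, together with a direct computation for $n=3$, where $\mu(G)$ factors explicitly and the claim reduces to $\sum_e(-\lambda_e)<4\max_e(-\lambda_e)$.
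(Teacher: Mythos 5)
Your argument is correct, but it takes a genuinely different route from the paper's, and on one point it does more than the paper. For the lower bound the paper never evaluates the recurrence at $z_G$: it invokes the edge-monotonicity Lemma \ref{largestzero} to compare $G$ with the star $G'$ obtained by zeroing every edge not incident to $i$, so that $z^*$ is realized concretely as the largest zero of $\mu(G')$, and the fixed-point equation drops out of the explicit formula $\mu(G')(x)=\prod_j(x-r_j)+\sum_{j\neq i}\lambda_{ij}\prod_{k\neq i,j}(x-r_k)$. You instead work at $z_G$ itself, using Lemma \ref{simple} ($z_G$-criticality) and interlacing to get $0<\alpha_j(G\setminus i)(z_G)\leq z_G-r_j$, and then a monotonicity analysis of $F(x)=r_i+\sum_{j\neq i}(-\lambda_{ij})/(x-r_j)$ to convert $z_G\geq F(z_G)$ into $z_G\geq z^*$; this is sound, avoids Lemma \ref{largestzero} entirely, and records the pointwise bound $\alpha_j(G\setminus i)(z_G)\leq z_G-r_j$ which the paper does not state. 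The paper's route buys brevity and an interpretation of $z^*$ as the largest zero of the star subgraph; yours stays inside the continued-fraction calculus. On the upper bound you have identified a real gap in the paper: Corollary \ref{Heilmann-Lieb-real} only places the zeros in a \emph{closed} interval, while the lemma asserts strict inequality, and the paper disposes of this in one sentence (``the fact that $G$ has at least three vertices''), with no actual argument for strictness. Your induction $\alpha_j(H)(x_0)\geq\sqrt{B_G}$ over proper induced subgraphs, the rooting at a vertex of degree at most $n-2$, and the averaging bound $\sum_{j\neq i}(-\lambda_{ij})\leq\frac{n-1}{n-2}B_G<2B_G$ in the complete case with $n\geq 4$ are all correct and close this gap. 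The only blemish is the three-vertex complete case: $\mu(G)$ does not factor unless all $r_j$ coincide, so your stated reduction to $\sum_e(-\lambda_e)<4\max_e(-\lambda_e)$ needs either the extra remark that raising every $r_j$ to $r_i$ only increases $z_G$, or a direct estimate such as
\begin{equation*}
\mu(G)(x_0)\;\geq\; a_1a_2a_3-B_G(a_1+a_2+a_3)\;\geq\;\tfrac{1}{3}B_G(a_1+a_2+a_3)\;>\;0,
\qquad a_j:=x_0-r_j\geq 2\sqrt{B_G},
\end{equation*}
where the middle inequality uses $a_ja_k\geq 4B_G$ for each pair; either patch is two lines, so this is an imprecision, not a gap.
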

\begin{proof} The upper bound for $z_G$ comes from Corollary~\ref{Heilmann-Lieb-real} and the fact that $G$ has at least three vertices. For the lower bound consider the graph $G'$ obtained from $G$ where all the edges that are not adjacent to $i$ are set to zero. By Lemma~\ref{largestzero} the largest zero of $\mu(G')$, denoted by $z^*$, is less than or equal to $z_G$. As $i$ is not an isolated vertex in $G'$ the Lemma~\ref{largestzero} implies that $z^*$ is bigger than $r_i$. Finally, observe that,
	
\[\mu(G')(x)=\ds\prod_j(x-r_j)+\ds\sum_{j\neq i}\lambda_{ij}\ds\prod_{k\neq i,j}(x-r_k)\implies\]

\[\ds\prod_j(z^*-r_j)=\ds\sum_{j\neq i}-\lambda_{ij}\ds\prod_{k\neq i,j}(z^*-r_k)\implies z^*-r_i=\ds\sum_{j\neq i}\dfrac{-\lambda_{ij}}{z^*-r_j}.\]
\end{proof}


\section*{Acknowledgements}

This work is partially based on my Ph.D. Thesis at IMPA, Brazil. The author acknowledges the support from CAPES-Brazil scholarship grant.

\IfFileExists{references.bib}
{\bibliography{references}}
{\bibliography{../references}}
	
	
\end{document}